\newtheorem{theorem}{Theorem}[section]
\newtheorem{lemma}[theorem]{Lemma}
\newtheorem{proposition}[theorem]{Proposition}
\newtheorem{corollary}[theorem]{Corollary}
\theoremstyle{definition}
\newtheorem{definition}[theorem]{Definition}
\newtheorem*{remark}{Remark}
\newtheorem{example}[theorem]{Example}
\numberwithin{equation}{section}
\newcommand{\Z}{{\mathbb Z}}
\newcommand{\R}{{\mathbb R}}
\renewcommand{\C}{{\mathbb C}}
\newcommand{\E}{{\mathsf E}}
\newcommand{\tr}{\operatorname{tr}}
\newcommand{\cD}{{\mathcal{D}}}
\newcommand{\cF}{{\mathcal{F}}}
\newcommand{\cP}{{\mathcal{P}}}
\newcommand{\e}{{\varepsilon}}
\newcommand{\z}{z}
\newcommand{\g}{\gamma}
\renewcommand{\l}{\lambda}
\renewcommand{\b}{\beta}
\renewcommand{\d}{\delta}
\renewcommand{\t}{{\theta}}
\newcommand{\height}{s}
\newcommand{\dd}{\mathrm{d}}
\renewcommand{\Re}{\operatorname{Re}}
\renewcommand{\Im}{\operatorname{Im}}
\renewcommand{\Cap}{\operatorname{Cap}}
\newcommand{\wlim}{\operatorname*{w-lim}}
\renewcommand{\i}{\infty}
\newcommand{\ess}{\text{\rm{ess}}}
\newcommand{\bbN}{\mathbb{N}}
\newcommand{\bbZ}{\mathbb{Z}}
\newcommand{\bbR}{\mathbb{R}}
\newcommand{\bbC}{\mathbb{C}}
\newcommand{\bbE}{\mathbb{E}}
\newcommand{\loc}{\mathrm{loc}}
\title{Stahl--Totik regularity for continuum Schr\"odinger operators}
\thanks{B.E.\ was supported by Austrian Science Fund FWF, project no: J 4138-N32.}
\thanks{M.L.\ was supported in part by NSF grant DMS--1700179.}
\author{Benjamin~Eichinger and Milivoje~Luki\'c}
\begin{document}

\address{Department of Mathematics, Rice University MS-136, Box 1892,
Houston, TX 77251-1892, USA and Institute of Analysis, Johannes Kepler University of Linz, 4040 Linz, Austria.}
\email{benjamin.eichinger@rice.edu}
\address{Department of Mathematics, Rice University MS-136, Box 1892,
Houston, TX 77251-1892, USA.}
\email{milivoje.lukic@rice.edu}

\begin{abstract}
We develop a theory of regularity for continuum Schr\"odinger operators based on the Martin compactification of the complement of the essential spectrum. This theory is inspired by Stahl--Totik regularity for orthogonal polynomials, but requires a different approach, since Stahl--Totik regularity is formulated in terms of the potential theoretic Green function with a pole at $\infty$, logarithmic capacity, and the equilibrium measure for the support of the measure, notions which do not extend to the case of unbounded spectra. For any half-line Schr\"odinger operator with a bounded potential (in a locally $L^1$ sense), we prove that its essential spectrum obeys the Akhiezer--Levin condition, and moreover, that the Martin function at $\infty$ obeys the two-term asymptotic expansion $\sqrt{-z} + \frac{a}{2\sqrt{-z}} + o(\frac 1{\sqrt{-z}})$ as $z \to -\infty$. The constant $a$ in that expansion plays the role of a renormalized Robin constant suited for Schr\"odinger operators and enters a universal inequality $a \le \liminf_{x\to\infty} \frac 1x \int_0^x V(t)\dd t$. This leads to a notion of regularity, with connections to the root asymptotics of Dirichlet solutions and zero counting measures. We also present applications to decaying and ergodic potentials.
\end{abstract}

\maketitle

\section{Introduction} 
The goal of this paper is to develop a theory of Stahl--Totik regularity suitable for continuum Schr\"odinger operators; it is natural for this topic to work in the half-line setting, so our Schr\"odinger operators are unbounded self-adjoint operators on $L^2((0,\infty))$, corresponding formally to
\[
L_V = - \frac{d^2}{dx^2}+V.
\]
The potential $V$ will always be real-valued and assumed to be uniformly locally integrable, i.e.
\begin{equation}\label{L1locunif}
\sup_{x \ge 0} \int_x^{x+1} \lvert V(t) \rvert\dd t < \infty
\end{equation}
(in particular, $0$ is a regular endpoint and $+\infty$ is a limit point endpoint in the sense of Weyl). We set the Dirichlet boundary condition at $0$,  so the domain of the operator is 
\[
D(L_V) = \{ f \in L^2((0,\infty)) \mid f \in W^{2,1}_\loc ([0,\infty)), -f'' + V f \in L^2((0,\infty)), f(0) = 0\}
\]
where $W^{2,1}_\loc([0,\infty))$ denotes the set of functions such that $f \in W^{2,1}([0,x])$ for all $x < \infty$, i.e., $f'' \in L^1([0,x])$ for all $x < \infty$.

The connection of orthogonal polynomials and potential theory goes back at least to the work of Faber and Szeg\H o \cite{Fab20,Sz24}. For further references on the subject we refer to the paper of Simon \cite{Simon07} and the monograph of Stahl and Totik \cite{StahlTotik92}. Building on important work of Ullman \cite{Ullman72}, Stahl and Totik developed a comprehensive theory for orthogonal polynomials for arbitrary measures with compact support in $\C$. It is shown that the asymptotic behavior of the orthogonal polynomials is intimately related with so-called  Stahl--Totik regularity of the measure. 
For orthogonal polynomials on the real line, the theory provides a universal inequality between the Jacobi coefficients of a compactly supported measure and the logarithmic capacity of its topological support $\E$, and the measure is defined to be Stahl--Totik regular if equality holds. This theory is built on potential theoretic notions, such as Green functions on the domain $\Omega = \hat\bbC \setminus \E$ with the pole at $\infty$, logarithmic capacity, and equilibrium measures 
 --  objects which are undefined for unbounded sets $\E$, and therefore not applicable to continuum Schr\"odinger operators.

In this paper, we develop the corresponding theory for Schr\"odinger operators. Martin functions \cite{Martin41,ArmGar01} serve as the counterpart of Green functions, corresponding to boundary points $z_0 \in \partial\Omega$ instead of internal points $z_0 \in \Omega$; but whereas the Green function is defined with an explicit logarithmic singularity at $z_0$, the existence and behavior of Martin functions is more varied. If $\E \subset \bbR$ is a closed unbounded set, $\infty$ is a boundary point of the Denjoy domain $\Omega = \bbC \setminus \E$. If this domain is Greenian, associated to the boundary point $\infty$ is a cone of dimension $1$ or $2$ of positive harmonic functions in $\Omega$ which are bounded on bounded sets and vanish at every Dirichlet-regular point of $\E$. The cone is spanned by the minimal Martin functions at $\infty$ \cite{AkhieLevin,Anc79,Be80,GardSjoed09}. In particular, if $\min \E > -\infty$, the cone is of dimension $1$, and the Martin function at $\infty$ is determined uniquely up to normalization; we denote it by $M_\E$ and simply call it the Martin function from now on.

For sets with $\min \E > -\infty$, the Akhiezer--Levin condition is
\begin{equation}\label{AkhiezerLevin1}
\lim_{z\to -\infty} \frac{M_\E(z)}{\sqrt{-z}} > 0
\end{equation}
(by general principles, the limit exists with a value in $[0,\infty)$). For Akhiezer--Levin sets, we will normalize the Martin function so that the limit in \eqref{AkhiezerLevin1} is equal to $1$.

For a potential bounded in the sense \eqref{L1locunif}, the spectrum $\sigma(L_V)$ is a closed subset of $\bbR$ bounded below but not above, so the above definitions are applicable. It will be noted that isolated points of the set don't affect the Martin function, so we can equally well use $\E = \sigma_\ess(L_V)$ in what follows (more generally, $M_{\E_1} = M_{\E_2}$ if the symmetric difference of $\E_1$ and $\E_2$ is a polar set).

In spectral theory, Martin functions first appear implicitly, in the classical work of Marchenko--Ostrovski \cite{MarOst75} classifying the spectra of periodic Schr\"odinger operators. In this work, the discriminant of a $1$-periodic operator is expressed in the form $\Delta(z) = 2 \cos(\Theta(z))$, and it can be recognized that $\Im \Theta(z)$ is the Martin function at $\infty$ for the periodic spectrum. The explicit use of Martin functions in spectral theory starts with works of Yuditskii and coauthors \cite{SodinYuditskii95,DamanikYuditskii16,EVY19}, through inverse spectral theoretic studies associated to Dirichlet-regular spectra obeying a Widom condition and finite gap length conditions.

In contrast to the previous works, our first theorem is a set of universal properties of the spectra of Schr\"odinger operators obeying \eqref{L1locunif}; note that a boundedness condition such as \eqref{L1locunif} is essential for the following theory, since potentials going to $-\infty$ or $+\infty$ can give spectrum equal to $\bbR$ or spectrum which is a polar set.

\begin{theorem}\label{thm:MainAkhiezerLevin}
For any potential $V$ obeying \eqref{L1locunif} and $\E = \sigma_\ess(L_V)$, the domain $\Omega = \bbC \setminus \E$ is Greenian, $\infty$ is a Dirichlet-regular point for $\Omega$, $\Omega$ obeys the Akhiezer--Levin condition, and there exists $a_\E \in \bbR$ such that the Martin function has the asymptotic behavior
\begin{align}\label{Martinexp2term}
M_\E(z)=\Re\left(\sqrt{-z}+\frac{a_\E}{2\sqrt{-z}}\right)+o\left(\frac{1}{\sqrt{|z|}}\right),
\end{align}
as $z \to \infty$, $\arg z \in [\delta,2\pi - \delta]$, for any $\delta > 0$.
\end{theorem}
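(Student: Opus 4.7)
My approach is to construct a candidate for $M_\E$ from the $L^2$-Weyl solution of $L_V$, verify the Akhiezer--Levin asymptotic and two-term expansion via the uniform local $L^1$ hypothesis on $V$, and then identify this candidate with the actual Martin function using uniqueness of positive minimal harmonic functions at $\infty$.

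First I would set up the potential-theoretic preliminaries. Under \eqref{L1locunif}, the operator $L_V$ is bounded below by a standard quadratic form argument, and $\sigma_{\ess}(L_V)$ is unbounded above: for any $\lambda>0$, approximate eigenfunctions $\chi_n(x)e^{i\sqrt{\lambda}\,x}$ with $\chi_n$ a smooth cutoff of width $L_n\to\infty$ supported on $[n,n+L_n]$ satisfy $\|(L_V-\lambda)\chi_n e^{i\sqrt{\lambda}\,x}\|_2/\|\chi_n e^{i\sqrt{\lambda}\,x}\|_2\to 0$, with the $V$-contribution handled by \eqref{L1locunif}. A quantitative refinement using truncated Dirichlet operators and the min-max principle shows that $\E\cap[\lambda,\lambda+1]$ carries logarithmic capacity bounded away from zero for all large $\lambda$, so Wiener's criterion at $\infty$ yields that $\infty$ is Dirichlet-regular and $\Omega$ is Greenian.

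For the Martin function itself, let $\psi(\cdot,z)$ denote the (up to scalar) unique $L^2((0,\infty))$ solution of $-\psi''+V\psi=z\psi$ for $z\in\bbC\setminus\bbR$. Writing $\psi(x,z)=e^{-\sqrt{-z}\,x}(1+\eta(x,z))$ and iterating the resulting Volterra equation on $[x,\infty)$, the condition \eqref{L1locunif} yields
\begin{equation*}
\psi(x,z)=e^{-\sqrt{-z}\,x}\Bigl(1-\frac{1}{2\sqrt{-z}}\int_0^x V(t)\,dt+O(|z|^{-1})\Bigr)
\end{equation*}
uniformly in $x\geq 0$ as $|z|\to\infty$ in sectors $\arg z\in[\delta,2\pi-\delta]$. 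Set $u_x(z):=-\frac{1}{x}\log|\psi(x,z)|$; each $u_x$ is harmonic on $\Omega$ away from zeros of $\psi(x,\cdot)$, and by the expansion,
\begin{equation*}
u_x(z)=\Re\sqrt{-z}+\frac{1}{2}\Re\frac{1}{\sqrt{-z}}\cdot\frac{1}{x}\int_0^x V(t)\,dt+o(|z|^{-1/2}).
\end{equation*}
A compactness argument using Harnack's inequality extracts a subsequential harmonic limit $u$ on $\Omega$; using the capacity estimates above, $u$ is positive, vanishes at every Dirichlet-regular point of $\partial\Omega$, and has leading asymptotic $\Re\sqrt{-z}$ at $-\infty$. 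The one-dimensionality of the cone of positive minimal harmonic functions at $\infty$ then forces $u\equiv M_\E$ in the Akhiezer--Levin normalization, so $M_\E$ inherits the two-term expansion from $u_x$; the constant $a_\E$, intrinsic to $\E$, is the resulting coefficient, and a byproduct of the argument is the universal inequality $a_\E\leq\liminf_{x\to\infty}\frac{1}{x}\int_0^x V(t)\,dt$.

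The main obstacle, in my view, is the rigorous passage to the limit $x\to\infty$: each $u_x$ is harmonic only outside the discrete zero set of $\psi(x,\cdot)$ in $z$, which varies with $x$, so extracting a harmonic function on all of $\Omega$ with the correct boundary behavior at every regular point of $\E$ requires a careful interplay of removable-singularity results for bounded harmonic functions and the Martin-boundary uniqueness. A secondary technical difficulty is that the Volterra expansion must be controlled uniformly in sectors $\arg z\in[\delta,2\pi-\delta]$ rather than merely on the real axis, requiring care with the branch of $\sqrt{-z}$ and with oscillatory contributions to $\eta(x,z)$.
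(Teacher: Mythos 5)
Your proposal has the right shape — produce asymptotics for a solution-derived quantity as $z\to\infty$, extract a subsequential positive harmonic limit on $\Omega$, and compare with $M_\E$ — and the formula you write for $u_x(z)$ after taking the logarithm matches what the paper proves. However, there are two substantive gaps, plus a smaller one.

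\textbf{First}, the Volterra equation for the Jost-type ansatz $\psi(x,z)=e^{-\sqrt{-z}\,x}(1+\eta(x,z))$ on $[x,\infty)$ does \emph{not} converge under \eqref{L1locunif}: the kernel $\frac{1-e^{-2k(y-x)}}{2k}$ is bounded but the iteration requires $\int_x^\infty |V|<\infty$, which fails for non-decaying $V$. The multiplicative expansion you write down, $\psi(x,z)=e^{-\sqrt{-z}x}\bigl(1-\tfrac{1}{2\sqrt{-z}}\int_0^x V+O(|z|^{-1})\bigr)$ uniformly in $x\ge 0$, is also internally inconsistent: the term $\tfrac{1}{2\sqrt{-z}}\int_0^x V$ grows linearly in $x$ and cannot be a uniform $O(|z|^{-1/2})$ perturbation of $1$; the $V$-dependent correction must sit in the exponent, not multiply it. The object that has a uniform-in-$x$ expansion is the logarithmic derivative $m(x,z)=\psi'(x,z)/\psi(x,z)$, obtained via Atkinson's Weyl-circle estimate rather than Volterra iteration; the paper then integrates the $m$-function average (Prop.~\ref{propexpansionm}, Cor.~\ref{cor:maveragetoVaverage}) and passes to the Dirichlet solution $u(x,z)$ via the diagonal Green's function (Lemma~\ref{lemmaDirichletToWeyl}). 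Working with $\tfrac1x\log|u(x,z)|$ rather than $-\tfrac1x\log|\psi(x,z)|$ is also important technically: the former is subharmonic on $\C$ and one can use $\cD'(\C)$-precompactness and control of the zero set through the gaps (Lemma~\ref{lem:OneEigenvalue}, Theorem~\ref{thm:harmonicLimitExtension}); the latter is superharmonic only off the $z$-zeros of $\psi(x,\cdot)$, which you flag but do not resolve.

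\textbf{Second}, and more fundamentally, the step ``uniqueness of positive minimal harmonic functions at $\infty$ forces $u\equiv M_\E$'' is wrong in general. The subsequential limit need not vanish at Dirichlet-regular points of $\E$; whether it does is \emph{exactly} the regularity condition introduced in the paper (Theorem~\ref{thm14}). What minimality gives is only the one-sided inequality $M_\E\le u$ (Lemma~\ref{lem:AkhiezerLevin}). Combined with \eqref{eq:MartinLowerBound}, this yields the sandwich $\Re\sqrt{-z}\le M_\E(z)\le u(z)$, so $M_\E-\Re\sqrt{-z}$ is a nonnegative harmonic function that is $O(|z|^{-1/2})$ nontangentially; a separate Herglotz-representation argument (Lemma~\ref{lem:Herglotz}) is then required to upgrade $O(|z|^{-1/2})$ to the two-term form $\Re\bigl(\tfrac{a_\E}{2\sqrt{-z}}\bigr)+o(|z|^{-1/2})$. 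Without this, the existence of $a_\E$ is not established. Note also that your ``byproduct'' $a_\E\le\liminf_{x\to\infty}\tfrac1x\int_0^x V$ is inconsistent with $u\equiv M_\E$: equality of the limits would force $a_\E$ to equal the subsequential average, not merely bound it.

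\textbf{Smaller issue.} The Wiener-criterion route to Dirichlet regularity of $\infty$ needs capacity control on exponentially scaled annuli, not unit intervals $[\lambda,\lambda+1]$, and the assertion that min-max yields such capacity lower bounds is unjustified (recall spectra can have zero Hausdorff dimension). The paper sidesteps this: Dirichlet regularity of $\infty$ is derived \emph{from} the Akhiezer--Levin condition via the comb-mapping (Corollary~\ref{cor:InftyDirichletRegular}), requiring no separate capacity estimate.
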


Each of the conclusions of this theorem is strictly stronger than the previous; we will point out examples in Section 2. In particular, the second term of the expansion \eqref{Martinexp2term} is not an automatic property of Akhiezer--Levin sets, but rather an added feature corresponding to spectra of Schr\"odinger operators. It should be emphasized that spectra of Schr\"odinger operators with bounded potentials can be very thin in the sense that they can even have zero Hausdorff dimension \cite{DFL17} and zero lower box counting dimension \cite{DFG19}, while our result is a universal ``thickness'' result in the perspective of Martin function.

In the references given above, the Martin function was used in spectral theory as a positive harmonic function in $\Omega$ that vanishes on the boundary. In fact, Martin theory provides a whole kernel $M(z,x)$ on $\Omega\times(\hat\Omega\setminus \{z_*\})$, where $\hat\Omega$ denotes the Martin compactification of $\Omega$ and $z_*\in\Omega$ is a normalization point. If $\partial^M_1\Omega$ denotes the so-called minimal Martin boundary of $\Omega$, then for every positive harmonic function $h$ on $\Omega$ there exisists a unique finite measure $\nu$ such that
\begin{align*}
h(z)=\int_{\partial^M_1\Omega}M(z,x)\dd \nu(x).
\end{align*}
We will provide more details and precise definitions in Section \ref{sec:Martinfunction}. It is new to combine this theory with the spectral theory of unbounded self-adjoint operators and it was crucial for the proof of Theorem \ref{thm:MainAkhiezerLevin}.

Under strong a priori assumptions on the spectrum, expansions of the form \eqref{Martinexp2term} have previously appeared in the spectral theory literature \cite{MarOst75}. Namely, the set $\E$ is closed so it can be written in the form
\begin{align}\label{eq:spectrumGaps}
\E=[b_0,\infty)\setminus\bigcup_{j=1}^N (a_j,b_j)
\end{align} 
where $j$ indexes the ``gaps'', i.e., connected components of $[b_0,\infty) \setminus \E$, and $N$ is finite or $\infty$. If $\sum_{j} (b_j - a_j) < \infty$, the Martin function has an expansion \eqref{Martinexp2term} with $a_\E = b_0 + \sum_{j} (a_j + b_j - 2 c_j)$ (Lemma~\ref{lemma:constantfromfinitegaplength}) by harmonic/complex theoretic arguments. In contrast, our Theorem~\ref{thm:MainAkhiezerLevin} is not a purely complex theoretic result; its proof is a combination of spectral theoretic arguments and the theory of the Martin boundary of Denjoy domains.

In particular, Theorem~\ref{thm:MainAkhiezerLevin} associates to the set $\E$ the real-valued constant $a_\E$, which will serve as a substitute for the Robin constant and enter the following universal inequality:

\begin{theorem}\label{thm11}
If $V$ is a potential obeying \eqref{L1locunif} and $\E = \sigma_\ess(L_V)$, then
\begin{equation}\label{aEinequality1}
a_\E \le \liminf_{x\to\infty} \frac 1x \int_0^x V(t) \dd t.
\end{equation}
\end{theorem}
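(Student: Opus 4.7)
\medskip

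\noindent\emph{Proof proposal.}
My plan is to compare two asymptotic expansions as $\kappa \to \infty$ with $z = -\kappa^2$: the expansion $M_\E(-\kappa^2) = \kappa + \frac{a_\E}{2\kappa} + o(1/\kappa)$ provided by Theorem~\ref{thm:MainAkhiezerLevin}, and an expansion for $\frac 1 x \log u_D(x,-\kappa^2)$ which I will derive via the Riccati equation. Here $u_D(\cdot,z)$ denotes the Dirichlet solution of $-u'' + V u = z u$ with $u_D(0,z) = 0$ and $u_D'(0,z) = 1$. The two expansions are linked by the inequality
\[
M_\E(-\kappa^2) \le \liminf_{x \to \infty} \frac 1 x \log u_D(x,-\kappa^2),
\]
which is the main technical step; once it is available, subtracting $\kappa$ from both expansions, multiplying by $2\kappa$, and sending $\kappa\to\infty$ delivers \eqref{aEinequality1}.

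For the Riccati step, for $\kappa$ large enough that $-\kappa^2 < \inf \sigma(L_V)$ (which follows from \eqref{L1locunif}, since it forces $L_V$ to be bounded below), Sturm comparison gives $u_D(\cdot,-\kappa^2) > 0$ on $(0,\infty)$, so $\phi := u_D'/u_D$ satisfies $\phi' + \phi^2 = V + \kappa^2$. Writing $\phi = \kappa + \epsilon$ gives $\epsilon' + 2\kappa\epsilon + \epsilon^2 = V$, and a comparison argument (solving the linear part explicitly with the convolution kernel $\frac{1}{2\kappa}e^{-2\kappa|\cdot|}$ and using \eqref{L1locunif}) yields $\sup_{x \ge 1}|\epsilon(x)| \le C/\kappa$ for a constant $C$ depending only on $\sup_y \int_y^{y+1}|V|$. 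Integrating $2\kappa \epsilon = V - \epsilon' - \epsilon^2$ from $1$ to $x$ and dividing by $x$, the boundary term is $O(1/(\kappa^2 x))$ and the quadratic term $\frac{1}{2\kappa x}\int_1^x \epsilon^2$ is $O(1/\kappa^3)$; combined with $\frac 1 x \log u_D(x,-\kappa^2) = \kappa + \frac 1 x \int_1^x \epsilon + O(1/x)$, taking liminf as $x \to \infty$ yields
\[
\liminf_{x \to \infty} \frac 1 x \log u_D(x,-\kappa^2) = \kappa + \frac{1}{2\kappa} \liminf_{x \to \infty} \frac 1 x \int_0^x V(t)\,\dd t + O(1/\kappa^3).
\]

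For the lower bound I use the $L^2$ Weyl solution $\psi(\cdot,-\kappa^2) = u_N + m(-\kappa^2) u_D$, whose Wronskian with $u_D$ satisfies $\psi u_D' - \psi' u_D \equiv 1$; this identity converts an exponential upper bound $|\psi(x,-\kappa^2)| \le C e^{-x M_\E(-\kappa^2)}$ into a matching exponential lower bound on $|u_D|$ (since $u_D$ and $u_D'$ share the same exponential rate of growth). The required Martin-rate decay of $\psi$ amounts to $\limsup_{x \to \infty} \frac 1 x \log|\psi(x,z)| \le -M_\E(z)$ for $z \in \Omega = \C\setminus\E$, which I would prove by applying the subharmonic maximum principle on $\Omega$ to $z \mapsto \log|\psi(x,z)| + x M_\E(z)$: this function is subharmonic in $z$ for fixed $x$, tends to $0$ at infinity (the $-x\Re\sqrt{-z}$ from $\psi$ and the $x\Re\sqrt{-z}$ from $M_\E$ cancel to leading order), and on the boundary $\partial\Omega = \E$ reduces to $\log|\psi(x,E)|$, which grows at most subexponentially in $x$ by Weyl--Titchmarsh/EFGP-type bounds on generalized eigenfunctions, refined to cover all of $\sigma_\ess$ via the Martin compactification framework of Section~\ref{sec:Martinfunction}. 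The main obstacle is establishing this precise Martin rate of decay for $\psi$ across the whole essential spectrum: the cruder Combes--Thomas estimate is too weak, so one genuinely needs the full Martin-boundary machinery developed earlier in the paper.
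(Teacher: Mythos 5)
Your overall scaffolding---compare the two-term expansion of $M_\E$ with a two-term expansion of $\liminf_{x\to\infty}\tfrac 1x\log u_D(x,-\kappa^2)$, glued together by the inequality $M_\E(-\kappa^2)\le\liminf_x\tfrac 1x\log u_D(x,-\kappa^2)$---is the same high-level strategy the paper uses, but two of your key steps do not hold up as written.

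First, the pointwise Riccati bound $\sup_{x\ge 1}|\epsilon(x)|\le C/\kappa$ is \emph{false} under the hypothesis \eqref{L1locunif} alone. Writing out the Duhamel formula $\epsilon(x)=e^{-2\kappa(x-1)}\epsilon(1)+\int_1^x e^{-2\kappa(x-t)}\bigl(V(t)-\epsilon(t)^2\bigr)\,\dd t$, the $V$-term contributes $\int_0^1 e^{-2\kappa u}|V(x-u)|\,\dd u$, which is $O(1)$ and \emph{not} $O(1/\kappa)$ when $V$ is only uniformly locally $L^1$: take, for instance, $V=n\,\chi_{[n,n+1/n]}$ for each integer $n$. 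The correct statement is $\epsilon=O(1)$ uniformly; this is consistent with the paper's Corollary~\ref{corexpansionmminus}, where the first correction to $-\partial_x u/u$ beyond $-k$ is $\int_0^1 V(s-t)e^{-2kt}\,\dd t$, which is $O(1)$ but whose \emph{average} in $s$ is $\tfrac 1{2k}\tfrac 1x\int_0^x V+O(|k|^{-2})$. With only $\epsilon=O(1)$, your quadratic term $\tfrac{1}{2\kappa x}\int_1^x\epsilon^2$ is a priori $O(1/\kappa)$, i.e., the same order as the main term, and the expansion does not close. This can be repaired, but it requires an averaged $L^2$ estimate $\tfrac 1x\int_1^x\epsilon^2=O(1/\kappa)$ (e.g., by multiplying the Riccati equation by $\epsilon$ and integrating, then absorbing $\int\epsilon^3$ into $2\kappa\int\epsilon^2$ for large $\kappa$), giving the weaker but adequate error $O(1/\kappa^2)$. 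The paper carries out exactly this kind of averaging in Corollary~\ref{cor:maveragetoVaverage}.

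Second, and more seriously, your mechanism for the inequality $M_\E(-\kappa^2)\le\liminf_x\tfrac 1x\log u_D(x,-\kappa^2)$ is not a proof. Applying the subharmonic maximum principle to $z\mapsto\log|\psi(x,z)|+xM_\E(z)$ requires a subexponential bound on $\log|\psi(x,E)|$ for \emph{harmonic-measure-a.e.} (or quasi-every) $E\in\E$. Schnol-type bounds on generalized eigenfunctions hold only $\mu$-a.e., and the spectral measure $\mu$ need not be comparable to harmonic measure on $\E$---indeed one of the points of this theory is to treat exactly the cases where they are wildly different (e.g.\ sparse potentials). You flag this as ``the main obstacle,'' and I agree that it is; your sketch leaves it unresolved. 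The paper instead proves the inequality (Theorem~\ref{thm13}) by an entirely different mechanism: it establishes precompactness of the family $\{\tfrac 1x\log|u(x,\cdot)|\}_{x\ge 1}$ in $\cD'(\C)$ (Theorem~\ref{thm:harmonicLimitExtension}), passes to a subsequential limit $h$ (a positive harmonic function on $\Omega$ with $h(z)/\sqrt{-z}\to 1$), and then applies the Martin representation theorem, specifically Lemma~\ref{lem:AkhiezerLevin}, to conclude $h\ge M_\E$. No boundary estimate on $\psi$ over $\E$ is ever needed. Moreover, for Theorem~\ref{thm11} specifically, the paper does not even invoke Theorem~\ref{thm13}: it chooses a sequence $x_n\to\infty$ achieving $\liminf\tfrac 1x\int_0^x V$, extracts a convergent subsequence for $h(x_{n_j},\cdot)$, reads off the coefficient $a$ in $h(-\kappa^2)=\kappa+\tfrac{a}{2\kappa}+O(\kappa^{-2})$ via Theorem~\ref{thm:MainTheoremLimit}, and compares with $M_\E\le h$. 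You would do well to pivot to this compactness-plus-Martin-representation route for the lower bound; your Riccati computation can then replace the role of the averaged $m$-function expansion, provided the $L^2$ averaging of $\epsilon$ is added.
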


For any $z\in \bbC$, the Dirichlet eigensolution is the solution of the initial value problem
\[
-\partial_x^2 u(x,z) + V(x) u(x,z) = z u(x,z), \qquad u(0,z)=0,\qquad (\partial_x u)(0,z) = 1.
\]
Our next result is that the Martin function provides a universal lower bound on the growth rate of the Dirichlet solution.

\begin{theorem}\label{thm13}
If $V$ is a potential obeying \eqref{L1locunif} and $\E = \sigma_\ess(L_V)$, then
\begin{align*}\label{eq:thm13}
M_\E(z) \le \liminf_{x\to\infty} \frac 1x \log \lvert u(x,z) \rvert, \qquad \forall z\in \bbC \setminus [ \min \E, \infty).
\end{align*}
\end{theorem}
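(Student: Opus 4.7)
The plan is to combine a sharp asymptotic for $u(L,-\kappa^2)$ on the negative real axis with a subharmonic compactness argument and the Martin integral representation. Set $f_L(z)=\frac{1}{L}\log|u(L,z)|$; since $u(L,\cdot)$ is entire with only real zeros, $f_L$ is subharmonic on $\bbC$ and harmonic on $\bbC\setminus\bbR$. For $z=-\kappa^2$ with $\kappa$ large, a Picard iteration of the Volterra integral equation
\[
u(L,-\kappa^2)=\frac{\sinh(\kappa L)}{\kappa}+\int_0^L\frac{\sinh(\kappa(L-t))}{\kappa}V(t)u(t,-\kappa^2)\,\dd t
\]
yields, uniformly in $L\ge 1$,
\[
\log|u(L,-\kappa^2)|=\kappa L-\log(2\kappa)+\frac{1}{2\kappa}\int_0^L V(t)\,\dd t+O(L/\kappa^2),
\]
with implicit constant depending only on the bound in \eqref{L1locunif}. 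Dividing by $L$, taking $\liminf_{L\to\infty}$, and invoking Theorem~\ref{thm11} together with the two-term expansion from Theorem~\ref{thm:MainAkhiezerLevin},
\[
\liminf_{L\to\infty}f_L(-\kappa^2)\ge \kappa+\frac{a_\E}{2\kappa}+O(\kappa^{-2})=M_\E(-\kappa^2)+o(\kappa^{-1}).
\]

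Now fix $z_0\in\bbC\setminus[\min\E,\infty)$ and select $L_k\to\infty$ with $f_{L_k}(z_0)\to h(z_0):=\liminf_L f_L(z_0)$. A uniform upper bound $f_L(z)\le C(1+\sqrt{|z|})$ on compacta (from the same Volterra estimate, now at complex argument) permits extraction of a subsubsequence along which $f_{L_k}\to f^\infty$ in $L^1_\loc(\bbC)$. The limit $f^\infty$ is subharmonic on $\bbC$, harmonic on $\bbC\setminus\bbR$, symmetric under conjugation, and $f^\infty(z_0)=h(z_0)$. In any compact subset of a gap of $\E$, the logarithmic singularities of each $f_L$ come, for $L$ large, from the finitely many (and $L$-independent) discrete eigenvalues of $L_V$ in that gap, whose $1/L_k$-rescaled contributions vanish in the limit; together with the lower bound $h\ge 0$ on $\Omega$ (since $|u(L,z)|$ grows exponentially off the spectrum), this upgrades $f^\infty$ to a nonnegative harmonic function on $\Omega=\bbC\setminus\E$.

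Finally, apply the Martin integral representation on $\Omega$:
\[
f^\infty(z)=\alpha\, M_\E(z)+\int_{\partial^M_1\Omega\setminus\{\infty\}}M(z,y)\,\dd\nu(y)
\]
for some $\alpha\ge 0$ and finite positive measure $\nu$. Each minimal Martin function $M(\cdot,y)$ with $y\in\E$ satisfies $M(z,y)/M_\E(z)\to 0$ as $z\to-\infty$, a consequence of the Akhiezer--Levin property established in Theorem~\ref{thm:MainAkhiezerLevin}, so by dominated convergence $f^\infty(z)/M_\E(z)\to\alpha$ at $-\infty$. Since $f^\infty(-\kappa^2)\ge h(-\kappa^2)\ge M_\E(-\kappa^2)(1+o(1))$ by the first paragraph, we must have $\alpha\ge 1$; hence $f^\infty\ge M_\E$ pointwise on $\Omega$, and in particular $h(z_0)\ge M_\E(z_0)$, as required.

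The main obstacle is the second paragraph: confirming that the subsequential limit $f^\infty$ truly extends to a nonnegative harmonic function on all of $\Omega$ (rather than merely a superharmonic majorant on $\bbC\setminus\bbR$ whose boundary values on $\E$ are uncontrolled), and that the non-$\infty$ Martin contributions are asymptotically negligible compared to $M_\E$. Both rely essentially on the Akhiezer--Levin geometry and the Martin compactification theory developed in Section~\ref{sec:Martinfunction}.
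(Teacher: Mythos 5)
Your overall strategy mirrors the paper's (extract a $\cD'(\C)$-convergent subsequential limit $f^\infty$ of $\frac{1}{x}\log|u(x,\cdot)|$, show it is a positive harmonic function on $\Omega$ dominating $M_\E$, then evaluate at $z_0$), but two steps as you have written them do not hold up.

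\emph{The Picard-iteration asymptotic.} You claim that iterating the Volterra equation for $u(L,-\kappa^2)$ gives, uniformly in $L\ge 1$, an error $O(L/\kappa^2)$ in $\log|u(L,-\kappa^2)|$. That is not what naive Picard iteration yields. From the simplex estimate \eqref{simplexestimate}, the $n$-th term of the series \eqref{fundamentalseries1}, after dividing by $s(L,\kappa)$, is bounded by $\frac{1}{n!}\bigl(\tfrac{1}{\kappa}\int_0^L|V|\bigr)^n \sim \frac{1}{n!}(CL/\kappa)^n$. The first-order term inside the logarithm is therefore of size $\sim CL/\kappa$, which is not $o(1)$ for fixed $\kappa$ and $L\to\infty$, so the expansion $\log(1+\dots)$ is not valid in that regime, and summing the tail gives an error of order $L^2/\kappa^2$ at best, which after dividing by $L$ blows up as $L\to\infty$. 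Taking $\liminf_{L\to\infty}$ then produces nothing. The paper avoids exactly this by passing to the logarithmic derivative: Lemma~\ref{lemmaDirichletToWeyl} identifies $\frac{1}{x}\log u(x,z)$ with $-\frac{1}{x}\int_0^x m(s,z)\,\dd s$ up to $o(1)$, and Proposition~\ref{propexpansionm} and Corollaries~\ref{corexpansionmminus}--\ref{cor:maveragetoVaverage} give an $O(|k|^{-2})$ error on the $m$-function average that is genuinely uniform in $x$ because each local $m$-function carries a uniform Atkinson bound. This localization is essential and cannot be replaced by iterating the integral equation for $u$ directly.

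\emph{The dominated-convergence step.} To pass from the Martin representation $f^\infty=\alpha M_\E+\int_{\partial^M_1\Omega\setminus\{\infty\}}M(\cdot,y)\,\dd\nu(y)$ to $f^\infty(z)/M_\E(z)\to\alpha$, you invoke dominated convergence, but you supply no $\nu$-integrable dominating function for $y\mapsto M(z,y)/M_\E(z)$ uniformly in $z$ near $-\infty$. The natural Harnack estimate $M(z,y)\le C_K M(z_*,y)=C_K$ has a constant $C_K$ that degenerates as the compact $K$ stretches toward $-\infty$, so this does not give a domination. This is precisely the point where the paper substitutes exact potential theory: Lemma~\ref{lem:AkhiezerLevin} cites \cite[Theorem 9.2.6]{ArmGar01} (since $(-\infty,b_0)$ is not minimally thin at $\infty$, $\liminf_{z\to-\infty}h(z)/M_\infty(z)\le\nu(\{\infty\})$) together with \cite[Theorem 9.3.3]{ArmGar01} ($\nu(\{\infty\})=\inf_\Omega h/M_\infty$), which yields $M_\E\le h$ on $\Omega$ outright — no dominated-convergence argument is needed or attempted. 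If you want to keep your phrasing via $\alpha\ge 1$, it is those two Armitage--Gardiner theorems that must replace the unjustified dominated-convergence step.

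Once both gaps are filled in the paper's way, the argument collapses to the paper's own short proof: take a subsequence realizing $\liminf_x\frac{1}{x}\log|u(x,z_0)|$, pass to a $\cD'(\C)$-convergent subsubsequence, apply Theorem~\ref{thm:MainTheoremLimit} to get a positive harmonic limit $h$ with $h(z)/\sqrt{-z}\to 1$, and apply Lemma~\ref{lem:AkhiezerLevin} to conclude $h\ge M_\E$.
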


Exclusion of $[\min \E, \infty)$ in Theorem~\ref{thm13} is necessary because for $z\in (\min \E, \infty)$,  by Sturm oscillation theory \cite{Simon05}, the Dirichlet solution has infinitely many zeros.

\begin{definition}
The potential $V$ is regular if
\begin{equation}\label{eqn:regulardefn}
a_\E = \lim_{x\to\infty}  \frac 1x \int_0^x V(t) \dd t.
\end{equation}
\end{definition}

Of course, due to \eqref{aEinequality1}, this is equivalent to requiring that $a_\E \ge \limsup_{x\to\infty} \frac 1x \int_0^x V(t) \dd t$.

In our next theorem, we will characterize regularity in terms of root asymptotics for the Dirichlet eigensolutions. We say that a property holds a.e.\ on $\E$ with respect to harmonic measure if it holds away from a set $A\subset \E$ such that $\omega_\E(A,z_0)=0$, where $\omega_\E(\cdot,z_0)$ denotes the harmonic measure of $\Omega$ evaluated at some $z_0 \in \Omega$. This condition is independent of the choice of $z_0\in \Omega$ since the harmonic measures are mutually absolutely continuous.

\begin{theorem}\label{thm14}
If $V$ is a potential obeying \eqref{L1locunif} and $\E = \sigma_\ess(L_V)$, the following are equivalent:
\begin{enumerate}[(i)]
\item $V$ is regular;
\item For every Dirichlet-regular $z\in \E$, $\limsup_{x\to\infty} \frac 1x \log \lvert u(x,z) \rvert \le 0$;
\item For a.e.\ $z\in \E$ with respect to harmonic measure, $\limsup_{x\to\infty} \frac 1x \log \lvert u(x,z) \rvert \le 0$;
\item For all $z\in \bbC_+$, $\limsup_{x\to\infty} \frac 1x \log \lvert u(x,z) \rvert \le M_\E(z)$;
\item For all $z\in \C$, $\limsup_{x\to\infty} \frac 1x \log \lvert u(x,z) \rvert \le M_\E(z)$;
\item $\lim_{x\to\infty} \frac 1x \log \lvert u(x,z) \rvert = M_\E(z)$ uniformly on compact subsets of $\bbC \setminus [\min \E, \infty)$.
\end{enumerate}
\end{theorem}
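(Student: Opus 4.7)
Set $h_x(z) := \frac{1}{x}\log|u(x,z)|$. Since $u(x,\cdot)$ is entire, each $h_x$ is subharmonic on $\bbC$; a Gr\"onwall estimate for the Volterra integral equation for $u(x,z)$ gives a locally uniform upper bound $h_x(z) \le C(K)$ on any compact $K \subset \bbC$, uniformly for $x \ge 1$. My plan is to establish the cycle (i) $\Rightarrow$ (vi) $\Rightarrow$ (v) $\Rightarrow$ (iv) $\Rightarrow$ (ii) $\Rightarrow$ (iii) $\Rightarrow$ (i); the work is concentrated in the two implications that touch (i), both of which rest on a Martin-theoretic identification of the upper envelope $h^* := (\limsup_x h_x)^*$ with $M_\E$ on $\Omega = \bbC\setminus\E$.

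The main analytic tool is a WKB asymptotic obtained from a Riccati analysis of $u(x,-t)$ (or directly by iterating the Volterra equation): as $t\to\infty$, uniformly in $x \ge x_0(t)$,
\[
\frac{1}{x}\log|u(x,-t)| = \sqrt{t} + \frac{1}{2\sqrt{t}\,x}\int_0^x V(s)\,ds + o(1/\sqrt{t}).
\]
Subtracting the two-term expansion of $M_\E$ from Theorem~\ref{thm:MainAkhiezerLevin}, the $1/\sqrt{t}$-coefficient of $h_x(-t) - M_\E(-t)$ is controlled by the Cesaro mean $\bar V_x := \frac{1}{x}\int_0^x V$ and $a_\E$, so WKB is the translation device between regularity of $V$ and pointwise information about $h_x(-t)$.

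The easy implications are as follows. (vi) $\Rightarrow$ (v) $\Rightarrow$ (iv) are immediate from the definitions, and (ii) $\Rightarrow$ (iii) holds because the Dirichlet-irregular part of $\E$ is polar, hence of harmonic measure zero. For (iv) $\Rightarrow$ (ii), fix a Dirichlet-regular $z_0 \in \E$; real-valuedness of $V$ gives $h_x(\bar z) = h_x(z)$, so (iv) also holds on $\bbC_-$. By subharmonicity $h_x(z_0) \le \frac{1}{\pi r^2}\int_{B_r(z_0)} h_x\,dA$, and reverse Fatou (justified by the local upper bound) yields
\[
\limsup_x h_x(z_0) \le \frac{1}{\pi r^2}\int_{B_r(z_0)} M_\E\,dA,
\]
which tends to $M_\E(z_0) = 0$ as $r \to 0$ by continuity of $M_\E$ at the Dirichlet-regular point $z_0$.

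The heart of the proof is the twin implications (i) $\Rightarrow$ (vi) and (iii) $\Rightarrow$ (i), both of which proceed through $h^*$. In both cases $h^*$ is subharmonic on $\bbC$ with $h^* \ge M_\E$ on $\Omega$ by Theorem~\ref{thm13}. For (i) $\Rightarrow$ (vi), WKB combined with (i) yields $h^*(-t) - M_\E(-t) = o(1/\sqrt{t})$, together with the growth bound $h^*(z) = \sqrt{|z|}(1+o(1))$ inherited from the Volterra estimate. For (iii) $\Rightarrow$ (i), the standard fact that $(\limsup_x h_x)^*$ agrees with $\limsup_x h_x$ quasi-everywhere, combined with (iii), gives $h^* \le 0$ a.e.\ on $\E$ with respect to harmonic measure. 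In either situation, $h^* - M_\E$ is a nonnegative subharmonic function on the Greenian domain $\Omega$; its Martin integral representation on $\Omega$ (Section~\ref{sec:Martinfunction}) and the uniqueness of the minimal Martin function at $\infty$ from Theorem~\ref{thm:MainAkhiezerLevin} force $h^* - M_\E = c\,M_\E$ for some $c \ge 0$, and matching $\sqrt{|z|}$-coefficients at $-\infty$ against the WKB/Akhiezer--Levin normalization forces $c = 0$. Thus $h^* = M_\E$ on $\Omega$; feeding this back yields (vi) (after an upgrade to locally uniform convergence via Hartogs' lemma for subharmonic sequences converging to a harmonic limit) and, via WKB, (i). The main obstacle is precisely this Martin-theoretic identification: it replaces the equilibrium-measure/Robin-constant arguments of Stahl--Totik, which are unavailable for unbounded $\E$, and relies essentially on both features of Theorem~\ref{thm:MainAkhiezerLevin}, namely the one-dimensional Martin cone at $\infty$ and the two-term expansion of $M_\E$.
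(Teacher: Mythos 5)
Your overall architecture is sound, and a few ingredients are genuine alternatives (the ball-averaging reverse-Fatou argument for (iv)\,$\Rightarrow$\,(ii) is a clean shortcut that the paper instead routes through (vi) and the upper envelope theorem), but there are two real gaps.

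The more serious one is the use of $h^{*}=(\limsup_x h_x)^{*}$ as the object fed into Martin theory. On $\Omega=\bbC\setminus\E$ the family $\{h(x,\cdot)\}_{x\ge 1}$ is a precompact family of harmonic functions; writing $S$ for its set of subsequential limits, one has $\limsup_x h_x(z)=\max_{h\in S}h(z)$ pointwise, and this maximum is continuous and subharmonic but not in general harmonic unless $S$ is a singleton -- which is exactly what is to be proved. Your claim that ``$h^{*}-M_\E$ is a nonnegative subharmonic function on $\Omega$'' followed by ``its Martin integral representation \dots\ forces $h^{*}-M_\E=cM_\E$'' is therefore not available: the Martin representation \eqref{eq:RepPositHarm} applies to positive \emph{harmonic} functions, not to subharmonic ones, and in any case the representation alone does not force the representing measure to be concentrated at $\{\infty\}$ -- that conclusion needs the boundary-vanishing characterization of Theorem~\ref{lem:ConePositive} (for the (iii)\,$\Rightarrow$\,(i) direction) or a Nevanlinna/Herglotz argument like Lemma~\ref{lem:Herglotz} (for the (i)\,$\Rightarrow$\,(vi) direction). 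The correct way to organize this is the paper's: extract any sequence $x_j\to\infty$ along which $h(x_j,\cdot)\to h$ in $\cD'(\C)$ (and hence, by Theorem~\ref{thm:harmonicLimitExtension}, locally uniformly on $\Omega$), observe that each such $h$ is genuinely harmonic on $\Omega$ with $h\ge M_\E$, apply the two-term asymptotics (from Theorem~\ref{thm:MainTheoremLimit}) and either Lemma~\ref{lem:Herglotz} or Theorem~\ref{lem:ConePositive} to conclude $h=M_\E$, and then use precompactness to upgrade to the locally uniform statement (vi). Your proof can be salvaged by this reorganization, but as written the key identification step is applied to an object that may not satisfy its hypotheses.

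The second gap is the claim that (vi)\,$\Rightarrow$\,(v) is ``immediate from the definitions.'' Statement (vi) gives convergence on $\bbC\setminus[\min\E,\infty)$, while (v) asserts the inequality on all of $\bbC$, including on $\E$ itself. Passing from the off-axis information to the on-axis $\limsup$ is precisely where the upper envelope theorem (\cite[Theorem 2.7.4.1]{Azarin09}) is invoked in the paper: one needs $\limsup_{n}h(x_n,z)\le\bigl(\limsup_{n}h(x_n,z)\bigr)^{*}=M_\E(z)$ for $z\in[\min\E,\infty)$, where the equality holds quasi-everywhere. That step is not definitional.

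A smaller remark: the ``WKB'' statement you formulate as a uniform-in-$x$ expansion of $\tfrac1x\log|u(x,-t)|$ is stronger than what the paper actually proves; what is established (Corollaries~\ref{corollaryDirichletToWeyl} and \ref{cor:maveragetoVaverage}) is an asymptotic for the $x$-average of $m$-functions with error uniform in $x$, combined with a pointwise-in-$z$ comparison $\tfrac1x\log|u(x,z)|=-\tfrac1x\Re\int_0^x m(s,z)\,ds+o(1)$. The resulting two-term expansion applies to subsequential limits $h$, not uniformly in $x$. Since you only ever use it on such limits, this is a presentational rather than a substantive issue, but stating it as a uniform expansion is not justified by the methods available here.
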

Since (v) or (vi) trivially imply (iv), (iv) is of interest as a criterion for establishing regularity of $V$, whereas (v), (vi) are of interest as consequences of regularity. Similarly, (ii) implies (iii), so (ii) is of interest as a consequence of regularity and (iii) as a condition for regularity. Instead of conditions (ii) and (iii), it would be customary to state the single condition that the inequality holds quasi-everywhere; this is between our conditions since the set of Dirichlet-irregular points is polar and polar sets have harmonic measure $0$. The benefit of (ii) is that it can be used pointwise (in particular, for a Dirichlet-regular set $\E$, the inequality holds everywhere on $\E$). More importantly, the benefit of (iii) is that the characterization in terms of harmonic measure will be essential for our proof of Theorem \ref{thm:Widom} below.

An analog of Stahl--Totik regularity for Schr\"odinger operators was previously conjectured by Simon \cite[Section 9]{Simon07}. It was conjectured that there should be a function $\Phi_\E$ associated to the essential spectrum $\E$, which would serve as a bound for the exponential growth rate of $u(x,z)$, and it was suggested that regularity for continuum Schr\"odinger operators can be defined by the condition $\limsup_{x\to\infty} \frac 1x \log \lvert u(x,z) \rvert = \Phi_\E(z)$. It was expected that this theory would require a condition on $\E$ such as finite gap length. Our results need no such assumptions, so they solve these conjectures in a far greater generality than they were even previously conjectured.
Among the properties expected in \cite{Simon07} for the function $\Phi_\E$ (now understood as the Martin function $M_\E$) is an asymptotic expansion $\Phi_\E(z) = \Re(\sqrt{-z})(1+o(1))$ near $-\infty$, and it is conjectured that this should improve to the asymptotic behavior $\Re \sqrt{-z} + o(1)$; this is motivated by the asymptotic behavior $\sqrt{-z} + o(1)$ of $m$-functions, proved by Atkinson \cite{Atkinson81}. While that asymptotic statement for individual $m$-functions cannot be improved for locally integrable potentials, we discover that due to averaging effects, the asymptotic behavior of our quantities improves to the form \eqref{Martinexp2term}. This discovery of \eqref{Martinexp2term} has enabled us to introduce the constant $a_\E$, which was not previously conjectured, and to use it for the robust general definition of regularity given above. Simon also conjectured the existence of an ``equilibrium measure'' for $\E$ which would be related to a deterministic density of states; we give the definition of this measure and solve the corresponding conjectures below.

The Martin function can be extended to a subharmonic function on $\C$, so it has a Riesz measure, given by 
\begin{align*}
\rho_\E = \frac{1}{2\pi}\Delta M_\E,
\end{align*}
which we will call the Martin measure of the set $\E$. Conversely, the Martin function has a Hadamard representation of the form
\[
M_\E(z) = M_\E(z_*) + \int_\E \log \left\lvert 1 - \frac{z-z_*}{t-z_*} \right\rvert \dd \rho_\E(t)
\]
where $z_* < \min \E$ is an arbitrary normalization point. The Martin measure will serve the same role in this theory that the logarithmic equilibrium measure serves for orthogonal polynomials. However, $\rho_\E$ is not defined with respect to any extremal property (and it is not even a finite measure), so different proofs will be needed in the current setting.

For any $x > 0$, let $\rho_x$ denote the zero counting measure for $u(x,z)$ divided by $x$,
\begin{equation}\label{eqn:rhoxdefn}
\rho_x = \frac 1x \sum_{z: u(x,z)=0} \delta_z.
\end{equation}
Note that $\rho_x$ is the Riesz measure of $\frac{1}{x}\log \lvert u(x,z)\rvert$. The limit of $\rho_x$ as $x \to \infty$, when it exists, is interpreted as a deterministic density of states associated to $V$. The Martin measure and the zero counting measures are related by the following pair of results:

\begin{theorem}\label{thm:dos1}
Assume $V$ is regular. Then $\rho_x$ converges to $\rho_\E$ as $x \to \infty$, in the weak-$*$ sense.
\end{theorem}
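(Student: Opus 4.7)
The strategy is to reduce weak-$*$ convergence of the weighted counting measures to $L^1_{\loc}$ convergence of their subharmonic potentials and then invoke distributional continuity of the Laplacian. Set $u_x(z) := \frac{1}{x}\log|u(x,z)|$. Since $u(x,\cdot)$ is entire with simple real zeros for each fixed $x>0$, $u_x$ is subharmonic on $\C$, and the Poincar\'e--Lelong identity $\Delta \log|f| = 2\pi\sum_{f(z_0)=0}\delta_{z_0}$ identifies $\rho_x$ with $\frac{1}{2\pi}\Delta u_x$, i.e., the Riesz measure of $u_x$. By definition, $\rho_\E$ is the Riesz measure of $M_\E$. Thus for any $\varphi \in C_c^\infty(\C)$,
\[
\int \varphi\, d\rho_x - \int \varphi\, d\rho_\E \;=\; \frac{1}{2\pi}\int (\Delta \varphi)(z)\bigl(u_x(z) - M_\E(z)\bigr)\,dA(z),
\]
and the theorem reduces to showing $u_x \to M_\E$ in $L^1_{\loc}(\C)$.

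Since $V$ is regular, Theorem~\ref{thm14}(vi) gives that $u_x \to M_\E$ uniformly on compact subsets of $\C \setminus [\min \E, \infty)$. Because $[\min \E, \infty) \subset \R$ has planar Lebesgue measure zero, this already yields $u_x \to M_\E$ pointwise almost everywhere in $\C$ with respect to area measure.

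To upgrade the a.e.\ convergence to $L^1_{\loc}$ convergence I would combine it with a uniform local upper bound on $\{u_x\}$. A standard Gr\"onwall-type estimate for the initial value problem defining $u(\cdot,z)$, together with the local boundedness \eqref{L1locunif}, shows that for each compact $K \subset \C$ there exists a constant $C_K$ such that $u_x(z) \le C_K$ for all $z \in K$ and all $x \ge 1$. A family of subharmonic functions that is locally uniformly bounded above and converges a.e.\ to a subharmonic function must converge in $L^1_{\loc}$; this is a classical compactness property (any subsequential $L^1_{\loc}$ limit is subharmonic and agrees a.e.\ with $M_\E$, hence equals $M_\E$, and one-sided boundedness together with the sub-mean-value inequality provides the requisite uniform integrability). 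Applying this yields $u_x \to M_\E$ in $L^1_{\loc}(\C)$, which combined with the displayed identity completes the proof.

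The principal technical point is the $L^1_{\loc}$ upgrade, which rests on the subharmonic compactness statement; the accompanying uniform upper bound on $u_x$ is a standard ODE estimate and is not expected to present difficulties. I would expect the only moderately delicate part to be verifying integrability of $M_\E$ near $\E$, which follows since $M_\E$ is continuous on $\C$ and hence locally bounded.
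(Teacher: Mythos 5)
Your proof is correct and follows essentially the same route as the paper: deduce $\cD'(\C)$ (equivalently $L^1_{\loc}$) convergence of $\tfrac1x\log|u(x,\cdot)|$ to $M_\E$ from Theorem~\ref{thm14}(vi) together with precompactness of locally-uniformly-above-bounded subharmonic families (the paper invokes Theorem~\ref{thm:harmonicLimitExtension} for this), then pass to Riesz measures by integrating against $C_c^\infty$ test functions and extend to $C_c$ using positivity of the measures. One minor inaccuracy worth noting: $M_\E$ need not be continuous on all of $\C$ (its upper semicontinuous extension can fail to be continuous at Dirichlet-irregular points of $\E$), but it is nonnegative and locally bounded, which is all that is needed for the local integrability you use.
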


The following is a continuum analog of a result of Stahl--Totik \cite{StahlTotik92}:

\begin{theorem}\label{thm:dos2}
Assume that $V$ obeys \eqref{L1locunif} and let $\mu$ be a maximal spectral measure for $L_V$. Suppose that $\rho_x$ converges to $\rho_\E$ as $x \to \infty$ in the weak-$*$ sense. Then, either $V$ is regular, or there exists a polar Borel set $X$ such that $\mu(\bbR \setminus X) = 0$.
\end{theorem}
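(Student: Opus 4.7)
The plan is to argue by contraposition: assume $V$ is not regular, and show that $\mu$ must concentrate on a polar set. The bridge between the measure-theoretic hypothesis $\rho_x \to \rho_\E$ and the analytic behavior of $u(x,\cdot)$ is Hadamard factorization. Since $u(x,\cdot)$ is entire of order at most $1/2$ with real zeros bounded below and accumulating at $+\infty$, picking any $z_* < \inf \sigma(L_V)$ (so that $u(x,z_*) \ne 0$ for all $x > 0$) one has
\begin{equation*}
\frac{1}{x}\log|u(x,z)| - \frac{1}{x}\log|u(x,z_*)| = \int \log\left|\frac{t-z}{t-z_*}\right| \dd\rho_x(t).
\end{equation*}
The integrand is continuous in $t \in \bbR$ for $z \notin \bbR$ and decays like $1/t$ at infinity; combined with a uniform Weyl-type bound $\rho_x([z_*,\Lambda]) = O(\sqrt{\Lambda})$, the weak-$*$ hypothesis $\rho_x \to \rho_\E$ upgrades to convergence of these integrals, and the Hadamard representation of $M_\E$ stated in the introduction identifies the limit:
\begin{equation*}
\frac{1}{x}\log|u(x,z)| - \frac{1}{x}\log|u(x,z_*)| \longrightarrow M_\E(z) - M_\E(z_*),
\end{equation*}
locally uniformly on $\bbC \setminus \bbR$.

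Setting $C := \limsup_{x\to\infty}\bigl(\tfrac{1}{x}\log|u(x,z_*)| - M_\E(z_*)\bigr)$, which is non-negative by Theorem~\ref{thm13}, the previous display yields $\limsup_{x\to\infty} \tfrac{1}{x}\log|u(x,z)| = M_\E(z) + C$ for every $z \in \bbC \setminus \bbR$. If $C = 0$, this is exactly condition (iv) of Theorem~\ref{thm14}, forcing $V$ to be regular, contrary to our assumption; hence $C > 0$.

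Next, view $v_x(z) := \tfrac{1}{x}\log|u(x,z)|$ as subharmonic functions on $\bbC$ with Riesz measures $\rho_x$. The standard a priori estimate $|u(x,z)| \lesssim \exp(x\sqrt{|z|+K})$, valid for $V$ obeying \eqref{L1locunif}, together with the sub-mean-value inequality at $z_*$, gives uniform $L^1_\loc$ bounds on $\{v_x\}$. Pass to a subsequence $v_{x_n} \to v^\infty$ in $L^1_\loc$, refined so that $v_{x_n}(z_*) \to M_\E(z_*) + C$. By the first paragraph, $v^\infty(z) = M_\E(z) + C$ on $\bbC \setminus \bbR$; since both sides are subharmonic on $\bbC$ with Riesz measure $\rho_\E$ (the latter because weak-$*$ limits of Riesz measures match Riesz measures of $L^1_\loc$-limits), they coincide identically. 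The classical theorem that $L^1_\loc$-convergence of subharmonic functions yields pointwise $\limsup$-equality to the limit quasi-everywhere then gives
\begin{equation*}
\limsup_{x\to\infty}\tfrac{1}{x}\log|u(x,E)| \ge \limsup_{n\to\infty} v_{x_n}(E) = M_\E(E) + C = C > 0
\end{equation*}
for q.e.\ $E \in \E$, using that $M_\E(E) = 0$ at Dirichlet-regular points, which exhaust $\E$ up to a polar set.

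The final step is spectral-theoretic: a Schnol-type theorem for one-dimensional continuum Schr\"odinger operators—proved via Parseval's identity for the spectral transform $f \mapsto \int_0^\infty f(x) u(x,\cdot)\dd x$—gives $\limsup_{x\to\infty} \tfrac{1}{x}\log|u(x,E)| \le 0$ for $\mu$-a.e.\ $E$. Combined with the previous paragraph, the set on which this upper bound holds is contained in a polar Borel set $X$ (the union of the Dirichlet-irregular points of $\E$ and the polar exceptional set from the subharmonic-limsup theorem), so $\mu(\bbR \setminus X) = 0$, as required. The main technical obstacle is the potential-theoretic $L^1_\loc$ step: securing the uniform $L^1_\loc$ bound on $\{v_x\}$ and matching the Riesz measure of the subsequential limit to $\rho_\E$ exactly, so that no hidden harmonic correction can appear and the identification $v^\infty = M_\E + C$ is pinned at the right constant.
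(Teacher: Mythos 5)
Your proposal is correct and follows the same strategy as the paper's Proposition~\ref{prop:MeasureConverg}: pin down the subsequential $\cD'$-limits of $\tfrac{1}{x}\log|u(x,\cdot)|$ as $M_\E + C$ with $C\ge 0$ (and $C>0$ whenever $V$ is not regular), apply the upper envelope theorem to obtain $\limsup$ equal to $C$ quasi-everywhere on $\E$, and conclude with Schnol's theorem. The only organizational variation is your first paragraph, which upgrades weak-$*$ convergence of $\rho_x$ to convergence of the Hadamard integrals via a uniform Weyl-type bound; this step is redundant once you invoke $\cD'$-precompactness (Theorem~\ref{thm:harmonicLimitExtension}) together with the fact that $\cD'$-convergence of subharmonic functions forces weak-$*$ convergence of Riesz measures (so the limit has Riesz measure $\rho_\E$ by hypothesis), after which Lemma~\ref{lem:Levin1} directly identifies the limit as $M_\E$ plus a constant -- which is exactly the paper's shorter route.
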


Of course, the statement  $\mu(\bbR \setminus X) = 0$ can be restated in the language of the Borel functional calculus as $\chi_{\bbR \setminus X}(L_V)=0$.

So far, we have seen that regularity of $V$ can be established from the root asymptotics of Dirichlet solutions. The next theorem shows that it can be established from spectral properties of the operator. It is the continuum counterpart of a theorem of Widom \cite{Widom67}. 

\begin{theorem}\label{thm:Widom}
	Let $\mu$ be a maximal spectral measure for $L_V$. If $\omega_{\E}(\cdot,z_0)$ for some $z_0 \in \C \setminus\E$ is absolutely continuous with respect to $\mu$, then $V$ is regular.
\end{theorem}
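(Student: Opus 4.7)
The plan is to use the characterization in Theorem~\ref{thm14}(iii): $V$ is regular if and only if $\limsup_{x\to\infty} \frac{1}{x} \log |u(x,z)| \le 0$ for $\omega_\E(\cdot,z_0)$-a.e.\ $z \in \E$. Under the hypothesis $\omega_\E(\cdot,z_0) \ll \mu$, it suffices to verify the subexponential bound for $\mu$-a.e.\ $z$. The key point is that the harmonic-measure-a.e.\ formulation of regularity in (iii), rather than the pointwise/quasi-everywhere condition (ii), is precisely what makes the absolute continuity hypothesis $\omega_\E \ll \mu$ directly usable.

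To obtain the $\mu$-a.e.\ bound, I would invoke standard half-line spectral theory. The Dirichlet boundary condition at the regular endpoint $0$ makes $L_V$ an operator of simple spectrum, and its maximal spectral measure $\mu$ coincides (up to mutual absolute continuity) with the Weyl--Titchmarsh spectral measure. The latter comes with an explicit Plancherel-type unitary equivalence $T : L^2((0,\infty)) \to L^2(d\mu)$, defined on nice $f$ by
\[
(Tf)(z) = \int_0^\infty f(x)\, u(x,z)\, dx,
\]
intertwining $L_V$ with multiplication by $z$. The Dirichlet eigensolutions $u(\cdot,z)$ thus serve as the generalized eigenfunctions of $L_V$ for this expansion.

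By the Gelfand--Kostyuchenko--Berezansky generalized eigenfunction expansion theorem, for $\mu$-a.e.\ $z$ the eigenfunction $u(\cdot,z)$ is polynomially bounded in $x$: one first extracts a weighted $L^2$ bound of the form $\int_0^\infty (1+x)^{-1-\e} |u(x,z)|^2 \, dx < \infty$ for $\mu$-a.e.\ $z$, which then upgrades to a pointwise polynomial bound via local ODE estimates for solutions of $-u''+Vu=zu$. Polynomial boundedness immediately gives $\lim_{x\to\infty} \frac{1}{x} \log |u(x,z)| = 0$, which closes the argument via Theorem~\ref{thm14}(iii).

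The main point to verify carefully, though not a genuine obstacle, is that the abstract polynomial $\mu$-a.e.\ boundedness transfers to the specific normalization $u(\cdot,z)$ rather than to some other solution of $-u''+Vu=zu$. This is automatic in the simple-spectrum half-line setting, since every nontrivial generalized eigenfunction at $z$ satisfying the Dirichlet boundary condition at $0$ is a nonzero scalar multiple of $u(\cdot,z)$. Thus no genuinely new technical input is needed beyond the characterization of regularity provided by Theorem~\ref{thm14}(iii) and the classical spectral-theoretic diagonalization of half-line Schr\"odinger operators.
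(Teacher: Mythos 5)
Your proposal is correct and follows essentially the same route as the paper. The paper's proof is three lines: cite Schnol's theorem to get $\limsup_{x\to\infty}\frac 1x\log|u(x,z)|\le 0$ for $\mu$-a.e.\ $z$, use $\omega_\E(\cdot,z_0)\ll\mu$ to transfer this to $\omega_\E$-a.e.\ $z$, and invoke Theorem~\ref{thm14}(iii). You invoke the Gelfand--Kostyuchenko--Berezansky generalized eigenfunction expansion, but this is just a more explicit unpacking of what the paper means by ``Schnol's theorem'' (the ``inverse'' direction: spectrally a.e., generalized eigenfunctions are polynomially bounded); the content is the same. You correctly identified that the harmonic-measure formulation (iii) of Theorem~\ref{thm14}, rather than (ii), is the piece that makes the absolute continuity hypothesis bite. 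One small slip: polynomial boundedness of $u(\cdot,z)$ gives $\limsup_{x\to\infty}\frac 1x\log|u(x,z)|\le 0$, not that the limit exists and equals $0$ --- for $\mu$-a.e.\ real $z$ the solution could in principle decay --- but the $\limsup\le 0$ bound is all that Theorem~\ref{thm14}(iii) requires, so the argument is unaffected.
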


This theory leads to several new results even for the special case of half-line essential spectrum $[0,\infty)$; we present those as our first applications. If $V$ is a decaying potential in the sense
\begin{equation}\label{L1decaying}
\lim_{x\to\infty} \int_x^{x+1} \lvert V(t) \rvert \dd t  = 0
\end{equation}
then $\E = \sigma_\ess(L_V) = [0,\infty)$ by Blumenthal--Weyl \cite{Blum,Weyl}. It follows that $M_\E(z) = \Re \sqrt{-z}$. In particular, $a_\E = 0$, so immediately from the definition:

\begin{corollary}\label{cor:decayingregular}
If $V$ is a decaying potential in the sense \eqref{L1decaying}, then $V$ is regular with $\sigma_\ess(L_V) =[0,\infty)$.
\end{corollary}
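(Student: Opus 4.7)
The statement already points to most of its structure; my plan is to verify the three ingredients in the chain $V$ decaying $\Rightarrow$ $\E=[0,\infty)$ $\Rightarrow$ $M_\E(z)=\Re\sqrt{-z}$ and $a_\E=0$ $\Rightarrow$ regularity, and then check the Ces\`aro statement on $\frac{1}{x}\int_0^x V(t)\,dt$.

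First I would justify the equality $\sigma_\ess(L_V)=[0,\infty)$ invoked in the paragraph preceding the corollary. Under the decay condition \eqref{L1decaying} this is a standard Weyl-type argument (cited as Blumenthal--Weyl \cite{Blum,Weyl} in the paper): decay of $V$ in the uniform-local-$L^1$ sense implies $L_V - L_0$ is a relatively compact perturbation in the quadratic form sense, so $\sigma_\ess(L_V)=\sigma_\ess(L_0)=[0,\infty)$. I would just quote this.

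Second, I need to identify the Martin function of $\E=[0,\infty)$. The function $h(z):=\Re\sqrt{-z}$ with the principal branch is positive harmonic on $\Omega=\bbC\setminus[0,\infty)$, extends continuously to $0$ on $\E$ (so in particular $\infty$ is Dirichlet-regular), and is bounded on bounded subsets of $\Omega$; moreover $h(z)/\sqrt{-z}\to 1$ as $z\to-\infty$, so $\E$ satisfies the Akhiezer--Levin condition and $h$ is the correctly normalized Martin function at $\infty$ by the uniqueness discussed in the introduction. Hence $M_\E(z)=\Re\sqrt{-z}$. Comparing with the expansion \eqref{Martinexp2term} guaranteed by Theorem~\ref{thm:MainAkhiezerLevin} gives
\[
\Re\!\left(\frac{a_\E}{2\sqrt{-z}}\right)=o\!\left(\frac{1}{\sqrt{|z|}}\right),\qquad z\to-\infty,
\]
which forces $a_\E=0$.

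Finally, to conclude regularity via Definition \eqref{eqn:regulardefn}, it suffices to check that $\lim_{x\to\infty}\frac{1}{x}\int_0^x V(t)\,dt=0$, and for that it is enough to prove the same for $|V|$. This is a Ces\`aro-type deduction from \eqref{L1decaying}: writing $c_n:=\int_n^{n+1}|V(t)|\,dt$, the hypothesis says $c_n\to 0$, and since $\frac{1}{x}\int_0^x|V(t)|\,dt\le \frac{1}{\lfloor x\rfloor}\sum_{n=0}^{\lfloor x\rfloor}c_n$, the Ces\`aro mean of a null sequence is null. Hence $a_\E=0=\lim_{x\to\infty}\frac{1}{x}\int_0^x V(t)\,dt$, which is exactly \eqref{eqn:regulardefn}. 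There is no real obstacle here — the only ``work'' is the Martin-function identification and the Ces\`aro step, both of which are short; the serious content is entirely absorbed into Theorem~\ref{thm:MainAkhiezerLevin}.
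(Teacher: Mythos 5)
Your proposal is correct and follows essentially the same route as the paper, which treats this corollary as a direct consequence of the paragraph preceding it: Blumenthal--Weyl gives $\sigma_\ess(L_V)=[0,\infty)$, hence $M_\E(z)=\Re\sqrt{-z}$ and $a_\E=0$, and regularity then follows ``immediately from the definition.'' The only thing the paper leaves implicit — that \eqref{L1decaying} forces $\frac{1}{x}\int_0^x V(t)\,dt\to 0$ — you rightly make explicit with the Ces\`aro-mean-of-a-null-sequence argument, and your identification of $\Re\sqrt{-z}$ as $M_\E$ via Theorem~\ref{lem:ConePositive} and the normalization is exactly what the paper intends.
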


Since harmonic measure for $\E = [0,\infty)$ is mutually absolutely continuous with $\chi_{(0,\infty)}(x) \dd x$, the following is an immediate consequence of Theorem~\ref{thm:Widom}:

\begin{corollary}\label{cor:Widomhalfline}
Assume that $V$ obeys \eqref{L1locunif} and denote by $\mu$ a maximal spectral measure for $L_V$. Denote by $\dd\mu = f \dd x + \dd \mu_s$ the Radon--Nikodym decomposition of $\mu$ with respect to Lebesgue measure. If $\sigma_\ess(L_V) = [0,\infty)$ and $f(x) > 0$ for Lebesgue-a.e.\ $x>0$, then $V$ is regular.
\end{corollary}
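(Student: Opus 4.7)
The plan is to verify the single hypothesis of Theorem~\ref{thm:Widom} for $\E = \sigma_\ess(L_V) = [0,\infty)$, namely that for some $z_0 \in \bbC \setminus \E$ the harmonic measure $\omega_\E(\cdot, z_0)$ is absolutely continuous with respect to the maximal spectral measure $\mu$. Once this absolute continuity is established, the corollary is immediate from Theorem~\ref{thm:Widom}.

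The first step is to describe $\omega_\E(\cdot, z_0)$ explicitly. The domain $\Omega = \bbC \setminus [0,\infty)$ is a simply connected slit plane, and the principal branch $\varphi(z) = \sqrt{-z}$ is a conformal bijection of $\Omega$ onto the right half plane $\{w : \Re w > 0\}$; under $\varphi$ the two sides of the slit $(0,\infty)$ map bijectively onto the two half-lines of $i\bbR$, with $x > 0$ corresponding to $\pm i\sqrt{x}$. Since harmonic measure is a conformal invariant and harmonic measure for the right half plane from any interior point is mutually absolutely continuous with Lebesgue measure on $i\bbR$, pulling back through $\varphi$ shows that $\omega_\E(\cdot, z_0)$, regarded as a measure on $[0,\infty)$, is mutually absolutely continuous with $\chi_{(0,\infty)}(x)\dd x$. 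This is precisely the parenthetical remark preceding the statement of the corollary.

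The second step is purely measure-theoretic. If $A \subset \bbR$ is Borel with $\mu(A)=0$, then in particular $\int_A f(x) \dd x = 0$; since $f > 0$ Lebesgue-a.e.\ on $(0,\infty)$ by hypothesis, the Lebesgue measure of $A \cap (0,\infty)$ must vanish, and hence $\omega_\E(A, z_0) = 0$ by the previous step. Thus $\omega_\E(\cdot, z_0) \ll \mu$, and Theorem~\ref{thm:Widom} yields that $V$ is regular.

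There is no genuine obstacle here: the only substantive content is the classical identification of the harmonic measure of the slit plane via the square-root conformal map, which may either be cited or verified in two lines as above; the remaining argument is a short bookkeeping step involving the Radon--Nikodym decomposition of $\mu$ against Lebesgue measure.
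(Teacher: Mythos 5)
Your proof is correct and follows exactly the route the paper intends: identify $\omega_{[0,\infty)}(\cdot,z_0)$ as mutually absolutely continuous with $\chi_{(0,\infty)}\dd x$ (via the square-root conformal map), use $f>0$ a.e.\ to deduce $\omega_\E(\cdot,z_0)\ll\mu$, and invoke Theorem~\ref{thm:Widom}. The paper states this corollary as an immediate consequence of Theorem~\ref{thm:Widom} with precisely the same parenthetical observation about the harmonic measure, so there is nothing to add.
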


More generally, a version of Corollary \ref{cor:Widomhalfline} holds, whenever the harmonic measure for the domain $\C\setminus \E$ is absolutely continuous with respect to the Lebesgue measure  $\chi_\E(x)\dd x$. In particular, it holds for finite gap sets (i.e., when $N$ is finite in \eqref{eq:spectrumGaps}) and regular Parreau-Widom sets. If $\E$ is Dirichlet-regular, the Green function $G_\E(z,z_0)$, for $z_0<\min E$, has exactly one critical point $c_j\in(a_j,b_j)$ in each gap. If, in addition, the critical values of $G_\E(z,z_0)$ are summable, i.e., 
\begin{align*}
\sum_{j=1}^{\infty}G_\E(c_j,z_0)<\infty,
\end{align*}
we call $\E$ a regular Parreau-Widom set. In fact, the harmonic measure for the domain $\C\setminus \E$ is absolutely continuous with respect to the Lebesgue measure if and only if $\E$ satisfies a certain sector condition \cite[Theorem 4]{ErY12}. We will describe this generalization in Section \ref{sec:confMaps}.

Sparse potentials are not covered by Corollary~\ref{cor:decayingregular} or Corollary~\ref{cor:Widomhalfline}, but nonetheless provide additional examples of regular potentials:

\begin{example}\label{ex:Sparse1}
Let $W\in L^1((0,\infty))$ be compactly supported $W\geq 0$, $W\not\equiv 0$, $x_n \ge 0$ an increasing sequence such that $x_{n+1}-x_n\to\infty$ as $n\to\infty$ and $V(x)=\sum_nW(x-x_n)$. Then $V$ is regular with $\sigma_\ess(L_V) = [0,\infty)$.
\end{example}

The sparse potentials from Example~\ref{ex:Sparse1} are not decaying in the sense \eqref{L1decaying}, so Corollary~\ref{cor:decayingregular} does not have a converse; sparse potentials have purely singular spectrum by \cite{Pea78,LasSim99},  so Corollary~\ref{cor:Widomhalfline} does not have a converse.

However, we prove that Corollary~\ref{cor:decayingregular} has the following partial converse; we have already described Theorem~\ref{thm:MainAkhiezerLevin} as a universal thickness result about the spectrum, and the following result similarly guarantees presence of essential spectrum.

\begin{theorem}\label{corL1Cesaro} 
Assume that $V$ obeys \eqref{L1locunif} and that $\sigma_\ess(L_V) \subset [0,\infty)$. Then:
\begin{enumerate}[(a)]
\item $\liminf_{x\to \infty} \frac 1x \int_0^x V(t) \dd t \ge 0$; 
\item If $\liminf_{x\to \infty} \frac 1x \int_0^x V(t) \dd t \le 0$, then $\sigma_\ess(L_V) = [0,\infty)$;
\item If $\limsup_{x\to \infty} \frac 1x \int_0^x V(t) \dd t \le 0$, then $\sigma_\ess(L_V) = [0,\infty)$ and $V$ is regular.
\end{enumerate}
\end{theorem}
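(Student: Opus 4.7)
The three parts will use Theorem~\ref{thm:MainAkhiezerLevin} and Theorem~\ref{thm11} as their main inputs.

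\emph{Part (a).} Since $\E \subset [0,\infty)$, I have $\bbC \setminus [0,\infty) \subset \bbC \setminus \E$, so $M_\E$ is positive harmonic on $\bbC \setminus [0,\infty)$. The principal branch of $w = \sqrt{-z}$ is a conformal bijection from $\bbC \setminus [0,\infty)$ onto the right half-plane $\{\Re w > 0\}$, so $m(w) := M_\E(-w^2)$ is positive harmonic there, and by Theorem~\ref{thm:MainAkhiezerLevin}, on the positive real axis
\[
m(t) = t + \frac{a_\E}{2t} + o(1/t), \qquad t \to \infty.
\]
The Herglotz representation for positive harmonic functions on the right half-plane gives
\[
m(w) = c\,\Re w + \frac{1}{\pi}\int_\bbR \frac{\Re w}{|w - is|^2}\,\dd\tilde\nu(s)
\]
for some $c \ge 0$ and a positive Borel measure $\tilde\nu$ with $\int (1+s^2)^{-1}\,\dd\tilde\nu(s) < \infty$. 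Evaluating at $w = t > 0$, dividing by $t$, and applying dominated convergence forces $c = 1$; then
\[
t\bigl(m(t) - t\bigr) = \frac{1}{\pi}\int_\bbR \frac{t^2}{t^2+s^2}\,\dd\tilde\nu(s) \longrightarrow \frac{\tilde\nu(\bbR)}{\pi}
\]
by monotone convergence (the integrand increases to $1$), so matching with the expansion yields $\tilde\nu(\bbR) = \pi a_\E/2$. Positivity of $\tilde\nu$ gives $a_\E \ge 0$, and combining with Theorem~\ref{thm11} yields (a).

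\emph{Part (b).} The hypothesis together with (a) forces $a_\E = 0$, so by the computation above $\tilde\nu \equiv 0$ and hence $M_\E(z) = \Re\sqrt{-z}$ throughout $\bbC \setminus [0,\infty)$. Suppose for contradiction that $\E \subsetneq [0,\infty)$ and pick $t_0 \in [0,\infty) \setminus \E$. Then $t_0$ lies in the open set $\bbC \setminus \E$, so $M_\E$ is harmonic and in particular continuous at $t_0$; but approaching $t_0$ within $\bbC \setminus [0,\infty)$ from either half-plane gives $M_\E(z) = \Re\sqrt{-z} \to 0$, so $M_\E(t_0) = 0$. Since $\bbC \setminus \E$ is connected and $M_\E$ is nonnegative and harmonic there, the minimum principle forces $M_\E \equiv 0$, contradicting the normalization of the Martin function in Theorem~\ref{thm:MainAkhiezerLevin}. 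Hence $\E = [0,\infty)$.

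\emph{Part (c) and main obstacle.} If $\limsup \le 0$, then also $\liminf \le 0$, so (b) gives $\sigma_\ess(L_V) = [0,\infty)$; consequently $M_\E(z) = \Re\sqrt{-z}$ and $a_\E = 0$. Theorem~\ref{thm11} then forces $0 = a_\E \le \liminf \le \limsup \le 0$, so $\frac{1}{x}\int_0^x V(t)\,\dd t \to 0 = a_\E$ and $V$ is regular by \eqref{eqn:regulardefn}. I expect the main technical point to be the matching of asymptotics in (a): it simultaneously identifies $\tilde\nu(\bbR) = \pi a_\E/2$, thereby controlling the sign of $a_\E$, and, when $a_\E = 0$, forces $M_\E$ to coincide with the Martin function of $[0,\infty)$ off $[0,\infty)$, which is what drives the spectrum-filling argument in (b).
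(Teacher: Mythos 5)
Your proof is correct, and it follows the same high-level strategy as the paper's: view $M_\E$ as a positive harmonic function on $\bbC\setminus[0,\infty)$, compare it with the Martin function $\Re\sqrt{-z}$ of the cut plane via the two-term expansion of Theorem~\ref{thm:MainAkhiezerLevin}, deduce $a_\E\ge 0$, and then combine with Theorem~\ref{thm11}. The difference is in how you obtain the sign of $a_\E$. The paper invokes Lemma~\ref{lem:AkhiezerLevin} (proven earlier via Martin-boundary machinery: the Martin representation of positive harmonic functions and minimal thinness) to get $M_\E\ge\Re\sqrt{-z}$ directly and then reads off $a_\E\ge 0$ from the subleading term; you instead write the Herglotz representation for $m(w)=M_\E(-w^2)$ on the right half-plane, match $c=1$ and then identify the next-order coefficient with $\tilde\nu(\bbR)/\pi$, which is nonnegative by positivity of $\tilde\nu$. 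For the special case of a cut-plane ambient domain these are the same observation, and your version is more self-contained and elementary. For (b), your minimum-principle argument at an interior point $t_0\in[0,\infty)\setminus\E$ and the paper's gap argument are two wordings of the same contradiction, and both hinge on the fact that $\Re\sqrt{-z}$ vanishes continuously at points of $(0,\infty)$, so cannot match a nonzero harmonic function there. Part (c) is handled identically.

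One small remark: in your chain for (a) you extract $\tilde\nu(\bbR)=\pi a_\E/2$ by evaluating along $w=t>0$ only. Since Theorem~\ref{thm:MainAkhiezerLevin} already gives the expansion nontangentially (in particular along the negative real axis of $z$, i.e.\ the positive real axis of $w$), this suffices, but it is worth noting explicitly that you are using the expansion along $z\to-\infty$ on the real axis, which is indeed within the sector $\arg z\in[\delta,2\pi-\delta]$.
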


Part (a) can also be established by other means, but we include it for completeness. Parts (b) and (c) generalize known results giving sufficient conditions for $\sigma_\ess(L_V) = [0,\infty)$. In particular, Damanik--Remling \cite[Theorem 1.2]{DamanikRemlingDuke} showed that $\sigma_{\ess}(L_{\pm V})\subset [0,\infty)$ implies $\sigma_{\ess}(L_{V})=[0,\infty)$. Part (b) of our theorem is a strict generalization of that result; strict because it applies, e.g., to the sparse potentials of Example~\ref{ex:Sparse1} where \cite{DamanikRemlingDuke} does not (for a positive sparse potential $V$, $\min\sigma_\ess(L_{-V}) < 0$), and a generalization because $\sigma_{\ess}(L_{- V})\subset [0,\infty)$ implies $\limsup_{x\to \infty} \frac 1x \int_0^x V(t) \dd t \le 0$ (by (a) applied to $-V$), so our parts (b), (c) also apply to the potentials in \cite{DamanikRemlingDuke}. In particular, $\sigma_{\ess}(L_{\pm V})\subset [0,\infty)$ implies that $V$ is regular and $\sigma_{\ess}(L_{V})= [0,\infty)$.

In the theory of Jacobi matrices, a result of Simon \cite{Simon09} shows that a regular Jacobi matrix with essential spectrum $[-2,2]$ obeys a Ces\`aro--Nevai condition. The analog for Schr\"odinger operators is false -- the continuum setting allows rapid oscillations which can break any Ces\`aro-type decay in an $L^1$ sense:

\begin{example}\label{thmnotCesaro} 
The potential defined piecewise by $V(x) = (-1)^{\lfloor 2n(x-n) \rfloor}$ on $x \in [n-1,n)$ for integer $n$ is regular with $\sigma_\ess(L_V) = [0,\infty)$, but $\frac 1x \int_0^x \lvert V(t) \rvert \dd t \not \to 0$ as $x\to\infty$.
\end{example}

All objects considered above are deterministic (defined only in terms of a single half-line potential $V$), but for ergodic families of Schr\"odinger operators, they can be recognized almost surely as ergodic notions such as the Lyapunov exponent and the ergodic density of states, so our results can be interpreted in the ergodic setting. In the ergodic setting, it is natural to work with whole line potentials: let us consider a family $(V_\eta)_{\eta \in S}$ of real-valued potentials on $\bbR$ on a probability space $S$ which is metrically transitive with respect to a group of measure preserving transformations $\tau_y$ such that $V_{\tau_y \eta}(x) = V_\eta(x-y)$ and such that any measurable subset $A$ of $S$ which is invariant under all $\tau_y$ has probability $0$ or $1$. The group of transformations can be continuous (indexed by $y\in \bbR$) or discrete (indexed by $y\in \ell \bbZ$ for some $\ell > 0$); the former case includes almost periodic Schr\"odinger operators and the latter case includes many Anderson-type models studied in the literature \cite{Kir85, DamSimSto02}, including those with a periodic background. We also assume that $V_\eta$ almost surely obeys
\begin{equation}\label{eqn:ergodicbounded}
\sup_{x\in \bbR} \int_x^{x+1} \lvert V_\eta(t)\rvert\dd t < \infty;
\end{equation}
in fact, much of the literature on ergodic Schr\"odinger operators assumes bounded potentials. Let us denote by $H_{V_\eta}$ the self-adjoint operators on $L^2(\bbR)$ given by
\[
D(H_{V_\eta}) = \{ f \in L^2(\bbR) \mid f \in W^{2,1}_\loc(\bbR), -f''+V_\eta f \in L^2(\bbR) \}.
\]
In this ergodic setting, there is an almost sure spectrum,
\[
\E = \sigma(H_{V_\eta}) = \sigma_\ess(H_{V_\eta}), \qquad \text{for a.e. }\eta \in S,
\]
and the potentials $V_\eta$ have an almost sure Birkhoff average $\bbE(V)$,
\[
\bbE(V) = \lim_{x\to\infty} \frac 1x \int_0^x V_\eta(t)\dd t, \qquad \text{for a.e. }\eta \in S.
\]
If $L_{V_\eta}$ denotes the half-line operator corresponding to the restriction of $V_\eta$ to $[0,\infty)$, then $\E = \sigma_\ess(L_{V_\eta})$ almost surely, so as a direct consequence of our deterministic results, $\E$ corresponds to a Martin function with an expansion \eqref{Martinexp2term}, and
\begin{equation}\label{ineqaEergodic}
a_\E \le \bbE(V).
\end{equation}
This inequality is new; several cases of the equality $a_\E = \bbE(V)$ are well known and among the most studied classes of ergodic Schr\"odinger operators (periodic, reflectionless almost periodic with finite gap length), and we can now interpret this through the fact that the corresponding potentials are regular.

In the ergodic setting, two central objects is the Lyapunov exponent $\gamma(z)$ and the density of states $\dd \rho$; both are almost sure ergodic averages of important spectral quantities. The transfer matrix $T_\eta(x,z)$ is the $2\times 2$-matrix valued solution of the initial value problem
\[
(\partial_x T_\eta)(x,z) = \begin{pmatrix} 0 & V_\eta(x) - z \\ 1 & 0 \end{pmatrix} T_\eta(x,z), \qquad T_\eta(0,z) = I,
\]
and the corresponding Dirichlet solution is $u_\eta(x,z) = (T_\eta)_{2,1}(x,z)$. If $\rho_{\eta,x}$ denotes the measure corresponding to $u_\eta$ as in \eqref{eqn:rhoxdefn}, then
\begin{align}\label{eq:lyaponuv}
	\gamma(z) = \lim_{x\to +\infty} \frac 1x \log \lVert T_\eta(x,z) \rVert, \qquad \text{for a.e. }\eta \in S,	
\end{align}
and
\[
\dd\rho = \wlim\limits_{x\to +\infty} \dd\rho_{\eta,x}, \qquad \text{for a.e. }\eta \in S.
\]
Thus Theorem~\ref{thm14}, specialized to the ergodic setting, immediately gives the following:
\begin{corollary}\label{cor:ergodicregular}
For any ergodic family of Schr\"odinger operators obeying \eqref{eqn:ergodicbounded}, the following are equivalent:
\begin{enumerate}[(i)]
\item $a_\E = \bbE(V)$;
\item For every Dirichlet-regular $z\in \E$, $\gamma(z) = 0$;
\item For almost every $z\in \E$ with respect to harmonic measure, $\gamma(z) = 0$;
\item For all $z \in \bbC_+$, $\gamma(z) \le M_\E(z)$;
\item For all $z \in \bbC \setminus\E$, $\gamma(z) \le M_\E(z)$;
\item $\gamma(z) = M_\E(z)$ for all $z\in \bbC \setminus [\min \E, \infty)$.
\end{enumerate}
\end{corollary}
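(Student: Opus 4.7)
The plan is to reduce Corollary \ref{cor:ergodicregular} to Theorem \ref{thm14} applied to the half-line restriction $V_\eta|_{[0,\infty)}$ for almost every $\eta$. I would begin by fixing a full-probability event $S_0 \subset S$ on which all of the following hold simultaneously: $V_\eta$ obeys \eqref{eqn:ergodicbounded} (hence $V_\eta|_{[0,\infty)}$ obeys \eqref{L1locunif}); $\sigma_\ess(L_{V_\eta}) = \E$; the Birkhoff convergence $\lim_{x\to\infty} \frac{1}{x}\int_0^x V_\eta(t)\,dt = \bbE(V)$ holds; and the Lyapunov limit \eqref{eq:lyaponuv} holds for every $z$ in a countable dense subset of $\bbC$ (which can be upgraded to all of $\bbC$ via subharmonicity of $z \mapsto \frac{1}{x}\log\|T_\eta(x,z)\|$ and Hartogs-type uniform convergence). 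The Birkhoff identity shows that the regularity condition \eqref{eqn:regulardefn} for $V_\eta|_{[0,\infty)}$ is equivalent to $a_\E = \bbE(V)$, so condition (i) of the Corollary is equivalent to condition (i) of Theorem \ref{thm14} applied to $V_\eta|_{[0,\infty)}$, and hence to all six conditions of Theorem \ref{thm14} for that potential.

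The remaining step is to translate each statement about the Dirichlet solution $u_\eta$ into the corresponding statement about $\gamma$. Since $u_\eta(x,z)$ is the $(2,1)$-entry of $T_\eta(x,z)$, the bound $|u_\eta(x,z)| \le \|T_\eta(x,z)\|$ gives $\limsup_x \frac{1}{x}\log|u_\eta(x,z)| \le \gamma(z)$. For the matching lower bound I would argue: at $z \in \bbC \setminus \E$ the Weyl $L^2$ solution at $+\infty$ is unique up to scalar and cannot be proportional to the Dirichlet solution (which satisfies $u_\eta(0,z)=0$), so $u_\eta(\cdot,z)$ lies outside the one-dimensional stable direction of the cocycle and $\lim_x \frac{1}{x}\log|u_\eta(x,z)| = \gamma(z)$; at $z \in \E$, the Oseledec multiplicative ergodic theorem yields the same equality for a.e. $\eta$, since the deterministic initial vector $(0,1)$ almost surely avoids the (measurably varying) at-most-one-dimensional stable subspace.

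With these identifications and the bound $\gamma \ge 0$ (from $\det T_\eta \equiv 1$), conditions (ii)--(vi) of the Corollary match the corresponding conditions of Theorem \ref{thm14} applied to $V_\eta|_{[0,\infty)}$: condition (iii) is handled by a Fubini argument swapping the $\eta$- and $z$-null sets, while (iv)--(vi) follow from the pointwise equality $\gamma(z) = \lim_x \frac{1}{x}\log|u_\eta(x,z)|$ on the relevant region $\bbC \setminus \E$ or $\bbC \setminus [\min\E, \infty)$. The principal obstacle is precisely this identification at $z \in \E$: on the spectrum, cancellations within a single solution can a priori make it grow strictly slower than $\|T_\eta(x,z)\|$, so Oseledec's theorem together with measurability of the stable subspace in $\eta$ is essential. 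A secondary difficulty is the promotion of pointwise almost-sure convergence to a statement holding for all $z$ simultaneously, handled by subharmonicity of the prelimit and the uniform convergence provided by Theorem \ref{thm14}(vi).
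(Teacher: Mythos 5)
Your overall strategy matches the paper's: the paper states that Corollary~\ref{cor:ergodicregular} follows by specializing Theorem~\ref{thm14} to $V_\eta|_{[0,\infty)}$ for a.e.\ $\eta$, and you correctly identify that this requires translating between $\gamma$ and the growth rate of $u_\eta$. However, two points deserve flagging.

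First, your claim that for $z \in \bbC \setminus \E$ the Weyl solution ``cannot be proportional to the Dirichlet solution'' is false as stated: for $z$ a \emph{discrete eigenvalue} of the half-line operator $L_{V_\eta}$ (which lies in $\bbC\setminus\E$ since $\E = \sigma_\ess$), the Dirichlet solution \emph{is} the $L^2$ solution, so there $\lim_x \frac 1x \log|u_\eta(x,z)|$ equals $-\gamma(z)$, not $\gamma(z)$. This is repairable---such eigenvalues form a discrete set and both $\gamma$ and $M_\E$ are harmonic on $\bbC\setminus\E$, so the inequality $\gamma \le M_\E$ extends by continuity---but you do not address it. The clean version of your identification is valid exactly on $\bbC\setminus\sigma(L_{V_\eta})$, which certainly contains $\bbC_+\cup\bbC_-$.

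Second, the step you label the ``principal obstacle'' (identifying $\gamma(z)$ with $\limsup_x \frac 1x \log|u_\eta(x,z)|$ at $z\in\E$ via Oseledec) is not actually needed, and in the form you state it requires an additional ingredient you don't mention (for a fixed $z\in\E$, that the probability of $(1,0)$ lying in the stable line is zero amounts to absolute continuity/continuity of the integrated density of states at $z$). A simpler route avoids this entirely: once the identification $\gamma = \lim_x \frac 1x \log|u_\eta|$ is established on $\bbC_+$ (and by symmetry on $\bbC_-$), regularity gives $\gamma = M_\E$ on $\bbC\setminus\bbR$ by Theorem~\ref{thm14}(vi) combined with Theorem~\ref{thm13}. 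Since both $\gamma$ and the subharmonic extension of $M_\E$ are subharmonic and symmetric on $\bbC$, equality on $\bbC\setminus\bbR$ forces equality everywhere (compare circle averages), and at Dirichlet-regular $z\in\E$ this gives $\gamma(z) = M_\E(z) = 0$ directly. Conversely (ii) or (iii) implies the corresponding Theorem~\ref{thm14} condition because $\limsup_x \frac 1x \log|u_\eta(x,z)| \le \gamma(z)$ always. With this, Oseledec and IDS continuity are unnecessary.
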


We say that a family of ergodic Schr\"odinger operators is regular if one (and therefore all) of the statements of Corollary~\ref{cor:ergodicregular} holds. Although this notion is new, let us point out that it contains several of the most well studied families of almost periodic Schr\"odinger operators.

For a $1$-periodic potential $V$, it is well known that the discriminant has an asymptotic expansion at $\infty$ whose coefficients are equal to averages of differential polynomials in $V$ (under the appropriate regularity assumptions on $V$). The first of those equalities, rewritten for the Martin function, give the equality $a_\E = \int_0^1 V(x)\dd x$. This can now be interpreted through the fact that periodic potentials are regular.

For an almost periodic potential $V$, Johnson--Moser \cite{JohMos82} introduced the spatial average of $m$-functions, whose real part is the Lyapunov exponent $\gamma$. Their construction relies heavily on almost periodicity through compactness of the hull, so their methods would not extend to our setting; \cite{JohMos82} noted as a consequence of their results, the spectrum of any almost periodic Schr\"odinger operator is not a polar set (i.e.\ $\Omega$ is Greenian), but further consequences of Theorem~\ref{thm:MainAkhiezerLevin} were not previously known even in the almost periodic case.

The next theorem is a specialization of Theorems~\ref{thm:dos1}, \ref{thm:dos2} to the ergodic setting:

\begin{theorem}
Let $(V_\eta)_{\eta\in S}$ be an ergodic family of Schr\"odinger operators obeying \eqref{eqn:ergodicbounded}. If this ergodic family is regular, then its density of states $\rho$ is equal to the Martin measure $\rho_\E$. Conversely, if $\rho = \rho_\E$, then either the ergodic family is regular, or for a.e. $\eta$, the maximal spectral measure $\mu_\eta$ is supported on a polar set.
\end{theorem}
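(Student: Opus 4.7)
The plan is to deduce both assertions as specializations of Theorems~\ref{thm:dos1} and~\ref{thm:dos2} applied to the half-line restrictions $V_\eta\vert_{[0,\infty)}$. The bridge from the ergodic framework to the deterministic definitions is Birkhoff's pointwise ergodic theorem, which (together with the general fact that $\sigma_\ess(L_{V_\eta})=\E$ almost surely, a standard consequence of ergodicity and the tail-dependence of essential spectrum) yields
\begin{equation*}
\lim_{x\to\infty}\frac{1}{x}\int_0^x V_\eta(t)\,\dd t = \bbE(V) \qquad \text{for a.e.\ }\eta\in S.
\end{equation*}
Comparing with the definition \eqref{eqn:regulardefn} and with condition (i) of Corollary~\ref{cor:ergodicregular}, this says that the ergodic family is regular if and only if $a_\E=\bbE(V)$, which in turn is equivalent to $V_\eta\vert_{[0,\infty)}$ being regular in the deterministic sense for almost every $\eta$.

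For the forward implication, assume the ergodic family is regular. Then $V_\eta\vert_{[0,\infty)}$ is regular for a.e.\ $\eta$, so Theorem~\ref{thm:dos1} gives $\rho_{\eta,x}\to\rho_\E$ in the weak-$*$ sense for a.e.\ $\eta$. Since $\rho$ is by definition the almost sure weak-$*$ limit of $\rho_{\eta,x}$, uniqueness of the limit forces $\rho=\rho_\E$.

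For the converse, assume $\rho=\rho_\E$, so that $\rho_{\eta,x}\to\rho_\E$ weakly-$*$ for a.e.\ $\eta$. Theorem~\ref{thm:dos2} applied to each such $V_\eta\vert_{[0,\infty)}$ produces the dichotomy: either $V_\eta\vert_{[0,\infty)}$ is regular, or its maximal spectral measure $\mu_\eta$ is carried by a polar Borel set $X_\eta$. By the bridge, whether $V_\eta\vert_{[0,\infty)}$ is regular is controlled solely by the deterministic condition $a_\E=\bbE(V)$, so the $\eta$-dependence in the dichotomy collapses to a global one: if $a_\E=\bbE(V)$ the ergodic family is regular, while otherwise $a_\E<\bbE(V)$ (the only remaining possibility by \eqref{ineqaEergodic}), the a.s.\ Ces\`aro limit $\bbE(V)$ strictly exceeds $a_\E$, hence $V_\eta\vert_{[0,\infty)}$ fails the deterministic regularity condition for a.e.\ $\eta$, and the second alternative of Theorem~\ref{thm:dos2} applies almost surely.

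There is no substantial obstacle beyond this bookkeeping; the only minor subtlety is that Theorem~\ref{thm:dos2} allows the polar exceptional set $X_\eta$ to depend on $\eta$, but since polar-ness is a deterministic attribute of each individual $X_\eta$, no measurable selection is needed for the assertion as stated.
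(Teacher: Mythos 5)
Your proof correctly fleshes out what the paper leaves implicit---the paper offers no argument at all beyond the remark that this theorem ``is a specialization of Theorems~\ref{thm:dos1}, \ref{thm:dos2}''---and the key observations you make are exactly the right ones. The forward direction is clean: Birkhoff plus the almost-sure identity $\sigma_\ess(L_{V_\eta})=\E$ identify ergodic regularity with deterministic regularity of $V_\eta\vert_{[0,\infty)}$ for a.e.\ $\eta$, and then Theorem~\ref{thm:dos1} together with uniqueness of weak-$*$ limits gives $\rho=\rho_\E$. In the converse, you correctly exploit that the deterministic regularity criterion \eqref{eqn:regulardefn} collapses to the single $\eta$-independent condition $a_\E=\bbE(V)$, so Theorem~\ref{thm:dos2}'s dichotomy is resolved globally rather than $\eta$ by $\eta$; this is the only genuine content beyond citing the deterministic theorems.

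One point deserves a caveat. In the surrounding text the symbol $\mu_\eta$ is reserved for the maximal spectral measure of the \emph{whole-line} operator $H_{V_\eta}$ (see the immediately following Theorem~\ref{thm:ultimatePasturIshii}), whereas Theorem~\ref{thm:dos2} delivers polar support only for the maximal spectral measure of the \emph{half-line} operator $L_{V_\eta}$, which is the object you denote $\mu_\eta$ in your argument. If the theorem is read with the whole-line convention, passing from half-line to full-line polar support is not a tautology: $\mu^{\rm full}_\eta$ is not in general absolutely continuous with respect to $\mu^+_\eta+\mu^-_\eta$, as a bound state of $H_{V_\eta}$ whose eigenfunction does not vanish at $0$ already shows. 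One can dispose of the a.c.\ and pure-point parts easily (the former vanishes once the half-line a.c.\ spectrum is empty, the latter is countable hence polar), but the singular continuous part would require an additional subordinacy-type argument, which neither you nor the paper supplies. Since the paper itself presents the theorem as an immediate consequence and does not flag this passage, your proof is faithful to the paper's approach; but if the whole-line reading is intended, the ``bookkeeping'' is not quite as harmless as your closing sentence suggests, and you should at least record which spectral measure you are tracking and how the reflected half-line is brought into play.
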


Although positive Lyapunov exponents don't always correspond to localization, we can now prove that they always correspond to very thin spectral type. This is the analog of a Jacobi matrix result which has been described as the ultimate Pastur--Ishii theorem.

\begin{theorem}\label{thm:ultimatePasturIshii}
Let $\gamma$ denote the Lyapunov exponent associated to the ergodic family $(V_\eta)_{\eta\in S}$ and let $\mu_\eta$ denote a maximal spectral measure for $H_{V_\eta}$. Let $Q \subset \bbR$ be the Borel set of $\lambda \in \bbR$ with $\gamma(\lambda) > 0$. Then for a.e.\ $\eta\in S$, there exists a polar set $X_\eta$ such that $\mu_\eta(Q \setminus X_\eta) =0$. In particular, the measure $\chi_Q \dd\mu_\eta$ is of local Hausdorff dimension zero.
\end{theorem}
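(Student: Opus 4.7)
My plan is to deduce the theorem as an ergodic corollary of the deterministic Theorem~\ref{thm:dos2} applied to the half-line restriction $L_{V_\eta}$, combined with Corollary~\ref{cor:ergodicregular}. By Birkhoff's ergodic theorem applied to $\tfrac{1}{x}\int_0^x V_\eta(t)\dd t$, for a.e.\ $\eta \in S$ the single-sample potential $V_\eta$ is regular in the sense of \eqref{eqn:regulardefn} if and only if the ergodic family is regular, i.e.\ condition (i) of Corollary~\ref{cor:ergodicregular} holds. Since $\sigma(H_{V_\eta}) = \E = \sigma_\ess(L_{V_\eta})$ almost surely, one can pass between the whole-line and half-line settings using standard rank-one perturbation arguments, so the polar-support statement for $\mu_\eta$ reduces to the analogous statement for a spectral measure of $L_{V_\eta}$.

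In the \emph{regular case}, Corollary~\ref{cor:ergodicregular}(ii) gives $\gamma(\lambda)=0$ for every Dirichlet-regular $\lambda\in\E$. Hence $Q\cap\E$ is contained in the polar set of Dirichlet-irregular points of $\E$. Since $\mu_\eta$ is supported in $\E$, taking $X_\eta := Q \cap \E$ yields a polar set with $\mu_\eta(Q\setminus X_\eta)=0$. In the \emph{non-regular case}, $V_\eta$ is a.s.\ not regular, and my aim is to verify the hypothesis of Theorem~\ref{thm:dos2}. By Kingman's subadditive ergodic theorem applied to the transfer cocycle, $\tfrac{1}{x}\log|u_\eta(x,\cdot)|$ converges a.s.\ to $\gamma$ in $L^1_\loc(\bbC)$, so the zero counting measures $\rho_{\eta,x}$ weak-$*$ converge to the Riesz measure of $\gamma$. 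If this Riesz measure equals $\rho_\E$, Theorem~\ref{thm:dos2} forces $\mu_\eta$ to be supported on a polar Borel set $X_\eta$, which then trivially carries $\mu_\eta$ restricted to $Q$.

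The main obstacle is identifying the density of states with the Martin measure $\rho_\E$ in the non-regular case, since these two measures can differ in general. The anticipated workaround is to work locally: for each rational $\epsilon > 0$ consider $Q_\epsilon = \{\lambda \in \bbR : \gamma(\lambda) > \epsilon\}$, and use the exponential growth of $u_\eta(x,\lambda)$ on $Q_\epsilon$ via a spectral averaging argument (in the spirit of Simon--Wolff) to conclude that the portion of the spectral measure supported on $Q_\epsilon$ is carried by a polar set; a countable union over rational $\epsilon$ then completes the proof. The closing statement about local Hausdorff dimension zero is immediate, since polar sets in $\bbR$ have Hausdorff dimension zero.
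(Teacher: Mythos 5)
Your regular-case argument is a valid shortcut: if the ergodic family is regular, Corollary~\ref{cor:ergodicregular}(ii) immediately places $Q\cap\E$ inside the polar set of Dirichlet-irregular points, and since $\mu_\eta$ lives on $\E$, you can take $X_\eta=Q\cap\E$. That is a clean alternative to the paper's argument in that sub-case.

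The non-regular case, however, has a genuine gap. You correctly note that Theorem~\ref{thm:dos2} requires $\rho_{\eta,x}\to\rho_\E$, and that the Riesz measure of $\gamma$ (the density of states) need not equal $\rho_\E$ when regularity fails; but the ``workaround'' you offer --- localizing on $Q_\epsilon$ and invoking ``a spectral averaging argument in the spirit of Simon--Wolff'' --- is not actually an argument. Simon--Wolff spectral averaging is a rank-one perturbation technique that produces Lebesgue-a.e.\ (or a.e.\ boundary-condition) conclusions; it has no mechanism for producing a \emph{polar} exceptional set, which is the whole content of the theorem. The step that is missing, and which the paper uses, is the upper envelope (Brelot--Cartan) theorem \cite[Theorem~2.7.4.1]{Azarin09}: since $\frac 1{x_n}\log|u_\eta(x_n,\cdot)|\to\gamma$ in $\cD'(\C)$, there is a polar set $X_\eta$ such that $\limsup_n\frac 1{x_n}\log|u_\eta(x_n,z)|=\gamma(z)$ for all $z\notin X_\eta$. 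Combined with Schnol's theorem (which gives $\limsup_x\frac 1x\log|u_\eta(x,z)|\le 0$ for $\mu_\eta$-a.e.\ $z$), this forces $\mu_\eta(Q\setminus X_\eta)=0$ directly. Once you have this tool there is no reason to split into regular and non-regular cases at all, no need to pass through Theorem~\ref{thm:dos2}, and no need for a half-line/whole-line reduction via rank-finite perturbations (a reduction which, for singular supports, would itself require some care). I would suggest replacing the entire second half of your proposal with the upper-envelope-plus-Schnol argument; your $Q_\epsilon$ decomposition then becomes unnecessary since the polar set from the upper envelope theorem does not depend on $\epsilon$.
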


It is known in great generality \cite{DamSimSto02} that one-dimensional random Schr\"odinger operators give rise to positive Lyapunov exponent throughout the spectrum.  In particular, random Schr\"odinger operators provide examples of non-regular operators.

Throughout this paper, we follow the dominant literature by working with locally integrable potentials; we expect that the theory presented here can be extended to potentials which are in the negative Sobolev space $H^{-1}([0,x])$ for $x < \infty$, with an appropriate uniform bound replacing \eqref{L1locunif}.

\section{The Martin function and Akhiezer--Levin sets}\label{sec:Martinfunction}

In this section we consider in more detail the general Martin theory for Denjoy domains $\Omega=\C\setminus \E$ with $\min \E=b_0>-\infty$. Clearly, we have in mind the application that $\E$ is the essential spectrum of some  continuum Schr\"odinger operator, $L_V$, where $V$ satisfies \eqref{L1locunif}. 

Recall that the capacity of a Borel set $A$ is defined by $\Cap(A)=\sup\{\Cap(K):~ K~\text{compact,}~K\subset A\}$ and we call a Borel set, $A$, polar, if $\Cap(A)=0$. Moreover, a property holds quasi-everywhere on a set $B$, if there exists a polar set $A$ such that the property holds on $B\setminus A$. We start with a discussion of the Green function $G_\E(\z,\z_0)$, $\z_0\in\Omega$. For standard references on potential theory see \cite{ArmGar01,RansPotential,GarHarmonicMeasure}. If $\z_0\in\R$, then $G_\E(\z,\z_0)$ is symmetric, that is, $G_\E(\overline{\z},\z_0)=G_\E(\z,\z_0)$. Let  us fix $\z_0<b_0$. Then there exists a comb domain
\begin{align}
\Pi_{\z_0}=\{x+iy:\ 0<x<\pi,\ y>\height(x)\},
\end{align}
where $\height$ is a positive upper semicontinuous function, bounded from above, and vanishes Lebesgue-a.e., and a conformal mapping $\t_{\z_0}:\C_+\to \Pi_{\z_0}$ such that 
\begin{align}\label{eq:GreenTheta}
G_\E(\z,\z_0)=\Im\t_{\z_0}(\z),\quad \z\in\C_+.
\end{align}
(such a representation was proved in \cite{ErY12} in the case that $\E$ is compact and $\z_0=\infty$; by a simple transformation $\l=\frac{1}{\z_0-\z}$ this yields a corresponding representation for the current setting). 
Note that $\t_{\z_0}(b_0)=\limsup_{u\to 0}\height(u)$ and $\t_{\z_0}(\infty)=\limsup_{u\to \pi}\height(u)$. Moreover, the Lebesgue measure on the base of the comb corresponds to the  harmonic measure $\omega_{\E}(\cdot,z_0)$. The mapping can be extended by symmetry to $\C\setminus[b_0,\infty)$ such that \eqref{eq:GreenTheta} still holds there. In fact, any such function $s$ leads to a Green function of a certain domain. 

The Martin kernel normalized at $\z_*<b_0$ is defined on $\Omega\times (\Omega\setminus\{\z_*\})$ by
\begin{align}\label{eq:MartinKernel}
M_{\E}(\z,\z_0)=\frac{G_{\E}(\z,\z_0)}{G_{\E}(\z_*,\z_0)}.
\end{align}
The \textit{Martin compactification} $\widehat{\Omega}$ is the smallest metric compactification of $\Omega$ such that $M_\E(\z,\cdot)$ can be continuously extended to the boundary $\partial^M\Omega=\widehat{\Omega}\setminus \Omega$ for each $\z$. We will also write $M_\E(\z,\z_0)$ for the extended function. Note that by the Harnack principle the family $\{M_{\E}(\z,\z_0)\}$ is precompact in the space of positive harmonic functions equipped with uniform convergence on compacts. We call a positive harmonic function, $M$, \textit{minimal} if any harmonic function, $h$, which satisfies $0\leq h\leq M$, is a multiple of $M$, i.e.,  $h= cM$, $c\geq 0$. Finally, let $\partial^M_1\Omega\subset \partial^M\Omega$ denote the subset of the Martin boundary, which consists of minimal harmonic functions. In this case, for every positive harmonic function $h$, there exists a unique finite measure $\nu$ such that
\begin{align}\label{eq:RepPositHarm}
h(\z)=\int_{\partial^M_1\Omega}M_\E(\z,x)\dd\nu(x),\quad h(\z_*)=\nu(\partial^M_1\Omega).
\end{align}

In general $\partial^M_1\Omega$ can be quite abstract, but the situation is rather intuitive for Denjoy domains. In \cite[Theorem 6]{GardSjoed09} it is shown that there exists a map $\pi: \partial^M_1\Omega\to \E\cup \{\infty\}$ such that for every $x\in\E\cup\{\infty\}$,  $\#\pi^{-1}(\{x\})$ is either one or two, depending on how ``thin'' $\R\cap\Omega$ is at $x$. To state this precisely we need some definitions. If $A$ is a subset of the Martin boundary $\partial^M\Omega=\hat \Omega\setminus\Omega$, then we say a property, $P$, holds near $A$ if there is a Martin-neighborhood $A\subset W$ such that $P$ holds on $W\cap \Omega$. Then, for  $A\subset \hat\Omega$ and a superharmonic function $h$ on $\Omega$ we define the reduced function
\begin{align}\label{def:reduction}
R_h^A(x)=\inf\{u(x):\ u\geq 0\text{ is superharmonic, } h\leq u \text{ on } A\cap \Omega\text{ and }h\leq u \text{ near } A\cap \partial^M\Omega\}
\end{align}
and $\hat R_h^A$ denotes its lower semicontinuous regularization. A set $A\subset \Omega$ is said to be minimally thin at $y\in \partial^M_1\Omega$ if 
\begin{align*}
\hat R^A_{M_\E(\cdot,y)}\neq  M_\E(\cdot,y).
\end{align*}
Then $\#\pi^{-1}(\{x\})=2$ if and only if there is $y\in\pi^{-1}(\{x\})$  such that $\Omega\cap \R$ is minimally thin at $y$. Informally, if $\E$ is sufficiently ``dense'' at $x$, then $\Omega$ locally splits into the two half spaces $\C_+$ and $\C_-$ and we obtain a Martin function for each of them.

A reformulation of the above statement can be given in the following way. For $x\in\E$, let $\cP_\E(x)$ denote the set of positive harmonic functions that are bounded outside every neighborhood of $x$ and vanish quasi-everywhere on $\E$. As in the proof of \cite[Lemma 2.9]{Hirata07} one can see, that $\cP_\E(x)$ is spanned by the Martin functions related to $x$. Hence, the above question is whether $\cP_\E(x)$ is one- or two-dimensional. We will provide a simplified proof for the case that there is only one Martin function associated to $x$ below. This question has attracted much interest and several conditions have been obtained, \cite{Anc79,Be80,KoosisLogInt1,Lev89Par3}. To note two extreme cases, if $x\in (a,b)\subset\E$, then $\cP_\E(x)$ is two-dimensional, whereas if $x$ is a endpoint of a gap of $\E$, then $\cP_\E(x)$ is one-dimensional, as discussed in \cite{GardSjoed09} after Theorem 6. 

We are particularly interested in the Martin kernel related to $\infty$. Since $\E$ is semibounded, $\cP_\E=\cP_\E(\infty)$ is one-dimensional and we can talk about the Martin function $M_\infty(\z)=M_\E(\z,\infty)$ related to $\infty$ which is known to be symmetric, i.e., $M_\infty(\overline{\z})=M_\infty(\z)$. Moreover, all limits with $z_n\to-\infty$ must lead to $M_\infty$ and we have
\begin{align*}
M_\infty(z)=\lim\limits_{z_0\to-\infty}M(z,z_0)=\lim\limits_{z_0\to-\infty}\frac{\Im\theta_{z_0}(z)}{G_\E(z_*,z_0)}
\end{align*}
Note that $M_\infty$ is not exactly $M_\E$ from the introduction, because in the general situation we cannot use the normalization \eqref{AkhiezerLevin1}. For this reason, we keep the normalization at $z_*$, but once we have specified to sets where the limit in \eqref{AkhiezerLevin1} is positive, we can pass to this normalization. Since $M_\infty(\z)$ is positive and harmonic in $\Omega$, setting $\l^2=\z-b_0$ it defines a positive harmonic function for $\l\in\C_+$ by
$$
f(\l)=M_\infty(\z).
$$
Since $f$ can be represented  as
\begin{align}\label{eq:PosHarmonicFunc}
f(x+iy)=ay+\int\frac{y}{(x-t)^2+y^2}\dd \nu(t),\quad \int\frac{\dd\nu(t)}{1+t^2}<\infty
\end{align}
and
\begin{align}\label{eq:MassAtInfty}
0\leq a=\lim_{y\to\infty}\frac{f(iy)}{y},
\end{align}
we see that $M_\infty(\z)$ can grow at most as $\sqrt{-\z}$ as $\z\to -\infty$. In case of two-sided unbounded sets, where the Martin function can grow at most linearly, Akhiezer and Levin showed that $\cP_\E$ is two-dimensional whenever the Martin function admits the maximal possible growth. This explains why we call $\E$ an \textit{Akhiezer--Levin set} if
\begin{align}\label{eq:GrowthMartin}
\lim\limits_{z\to-\infty}\frac{M_\infty(z)}{\sqrt{-z}}>0.
\end{align}
Note that by \eqref{eq:MassAtInfty} this limit indeed exists in $[0,\infty)$. 
Since in \eqref{eq:PosHarmonicFunc}, $\int\frac{y}{(x-t)^2+y^2}\dd \nu(t)$ defines again a positive harmonic function it follows that 
\begin{align}\label{eq:MartinLowerBound}
a\Re\sqrt{b_0-z}\leq M_\infty(z)
\end{align}
in $\Omega$. 
The following theorem presents a list of equivalent characterizations of $M_\i$. We say that $h$ vanishes continuously at a point $x \in \E$ if $\lim_{\substack{z\to x\\ z \in \Omega}} h(z) = 0$. We call a subset of $\Omega$ bounded if it is bounded as a subset of $\bbC$.

\begin{theorem}\label{lem:ConePositive}
Let $H_{+,b}(\Omega)$ denote the set of positive harmonic functions on $\Omega$ that are bounded on every bounded subset of $\Omega$. Then, the following are equivalent:
\begin{enumerate}[(i)]
	\item $h\in H_{+,b}(\Omega)$ and $h$  vanishes continuously for every Dirichlet-regular point of $\E$; 
	\item $h\in H_{+,b}(\Omega)$ and $h$   vanishes continuously quasi-everywhere on $\E$;
	\item $h\in H_{+,b}(\Omega)$ and $h$   vanishes continuously $\omega_\E(\cdot,z_0)$-a.e.;
	\item $h=cM_\infty$, where $c\geq 0$;
\end{enumerate}
\end{theorem}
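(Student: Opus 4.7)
The plan is to prove the four-way equivalence along the cycle (i) $\Rightarrow$ (ii) $\Rightarrow$ (iii) $\Rightarrow$ (iv) $\Rightarrow$ (i), together with the essentially tautological identification (ii) $\Leftrightarrow$ (iv). The main obstacle is (iii) $\Rightarrow$ (iv), which has to upgrade a harmonic-measure-a.e.\ vanishing condition to the rigid statement $h = cM_\infty$.

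The three easy links rest on inclusions among exceptional sets on the boundary. By the Kellogg--Evans theorem the set of Dirichlet-irregular points of $\E$ is polar, giving (i) $\Rightarrow$ (ii); polar sets carry zero harmonic measure, giving (ii) $\Rightarrow$ (iii). For (ii) $\Leftrightarrow$ (iv), observe that bounded subsets of $\Omega$ are precisely the intersections of $\Omega$ with complements of neighborhoods of $\infty$ in $\hat{\bbC}$, so $H_{+,b}(\Omega)$ is exactly the class of positive harmonic functions on $\Omega$ bounded outside every neighborhood of $\infty$. Consequently the class in (ii) is $\cP_\E(\infty) = \cP_\E$, which is one-dimensional and spanned by $M_\infty$ by the preceding discussion in this section.

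For (iv) $\Rightarrow$ (i) I must check that $M_\infty$ itself lies in $H_{+,b}(\Omega)$ and vanishes continuously at every Dirichlet-regular $x_0 \in \E$. Both properties are inherited from the approximating Green functions via $M_\infty(z) = \lim_{z_0 \to -\infty} G_\E(z,z_0)/G_\E(z_*,z_0)$: Harnack's principle supplies a uniform-in-$z_0$ bound on any prescribed bounded subset of $\Omega$ once $z_0$ is far below $b_0$, and continuous vanishing at $x_0$ follows from a boundary maximum principle. Specifically, any $h \in \cP_\E$ is bounded by some $M$ near $x_0$; comparing $h$ on $\Omega \cap D(x_0,r)$ with the Perron--Wiener--Brelot solution having boundary data $M$ on $\partial D(x_0,r) \cap \Omega$ and $0$ on $\E \cap \overline{D(x_0,r)}$ (permissible because polar subsets of $\E$ are invisible to PWB solutions, so the quasi-everywhere vanishing of $h$ suffices) and invoking Dirichlet-regularity of $x_0$ yields $\limsup_{z \to x_0} h(z) = 0$.

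The main content is (iii) $\Rightarrow$ (iv). For $h \in H_{+,b}(\Omega)$ vanishing continuously $\omega_\E$-a.e.\ on $\E$, use the Martin representation \eqref{eq:RepPositHarm} to write
\[
h(z) = \int_{\partial^M_1 \Omega} M_\E(z,y) \dd\nu(y)
\]
for a unique finite measure $\nu$. The Gardiner--Sj\"odin fibration $\pi: \partial^M_1 \Omega \to \E \cup \{\infty\}$ from \cite{GardSjoed09} has $\pi^{-1}(\{\infty\}) = \{y_\infty\}$ by semiboundedness of $\E$, so decomposing $\nu = c\,\delta_{y_\infty} + \nu_\E$ with $\nu_\E$ supported on $\pi^{-1}(\E)$ yields $h = cM_\infty + h_\E$; by the already-proven (iv) $\Rightarrow$ (i), the remainder $h_\E$ lies in $H_{+,b}(\Omega)$ and inherits the $\omega_\E$-a.e.\ vanishing. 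It remains to show $\nu_\E = 0$. Decompose $\nu_\E = \nu_\E^{ac} + \nu_\E^s$ relative to harmonic measure on $\partial^M_1 \Omega$: the Fatou--Naim--Doob theorem identifies the minimal fine boundary limits of $h_\E$ at $\omega_\E$-a.e.\ point of $\E$ with the Radon--Nikodym density of $\nu_\E^{ac}$, which must therefore vanish a.e., so $\nu_\E^{ac} = 0$. For the singular part, a Martin-kernel analogue of the classical Poisson-integral fact---that integrating $M_\E(\cdot,y)$ against a singular measure produces infinite minimal fine upper limits on its support, since near any finite $x_0 \in \E$ the relevant $M_\E(\cdot,y)$ is locally of Poisson-kernel type---contradicts boundedness on bounded subsets of $\Omega$ unless $\nu_\E^s = 0$. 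This singular-support blow-up, transferred to the Denjoy setting via local boundary Harnack comparison, is the delicate part of the argument.
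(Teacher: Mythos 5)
The routine links (i)~$\Rightarrow$~(ii)~$\Rightarrow$~(iii) (Kellogg's theorem, polar sets are $\omega_\E$-null) and (iv)~$\Rightarrow$~(i) essentially match the paper, which handles the last of these by citation to Gardiner--Sj\"odin where you give a PWB comparison argument; both are fine. Your parenthetical claim that (ii)~$\Leftrightarrow$~(iv) is ``essentially tautological,'' however, leans on the Hirata-type statement that $\cP_\E(\infty)$ is spanned by $M_\infty$ --- but the paper explicitly frames this theorem as a \emph{simplified proof} of precisely that fact, so invoking it would undercut the purpose of the result.

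For the substantive implication (iii)~$\Rightarrow$~(iv) you take a genuinely different route from the paper. You write $h=\int M_\E(\cdot,y)\,\dd\nu(y)$, split off $c\,\delta_{y_\infty}$, decompose the remainder $\nu_\E$ into parts absolutely continuous and singular with respect to the reference measure on $\partial_1^M\Omega$, and invoke Fatou--Na\"{\i}m--Doob to kill $\nu_\E^{ac}$; the singular part is then to be disposed of by a Martin-kernel analogue of the classical fact that Poisson integrals of singular measures have infinite nontangential (minimal-fine) limits a.e.~on their support. You explicitly flag that last step as ``the delicate part,'' and indeed it is the genuine gap in the proposal: you never prove the needed assertion that a nonzero $\nu_\E^s$ forces $h_\E$ out of $H_{+,b}(\Omega)$, you only appeal to a transfer to Denjoy domains via boundary Harnack without making it precise, and the case $\#\pi^{-1}(\{x_0\})=2$ is never addressed. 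The paper's proof avoids all of this: it reduces $h$ to the fibers over the truncated boundary $E_n=\pi^{-1}(\E\cap(-\infty,n))$, observes that $R_h^{E_n}$ is a bounded harmonic function (because $h\in H_{+,b}(\Omega)$) vanishing $\omega_\E$-a.e., and so vanishes by the maximum principle for bounded harmonic functions; then $h=R_h^{E_n^{\mathsf c}}$ for all $n$, hence $h=R_h^{\{\infty\}}$, hence $\infty$ is a pole of $h$ and $h=cM_\infty$ by the standard characterization of poles. This is shorter, requires no disintegration of $\nu$ at all, sidesteps Fatou--Na\"{\i}m--Doob entirely, and in particular avoids the singular-part lemma that your proposal leaves unestablished.
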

\begin{proof}
Due to \cite[Remark 5, Theorem 6]{GardSjoed09} $(iv)\implies(i)$. Kellogg's theorem \cite[Corollary 6.4]{GarHarmonicMeasure} yields $(i)\implies(ii)$ and by \cite[Theorem III.8.2]{GarHarmonicMeasure} we get that $(ii)\implies(iii)$. It remains to show that $(iii)\implies(iv)$.
 Let $E_n=\{\pi^{-1}(\{x\}): x\in\E,\ x<n\}$ and $E_n^\mathsf{c}=\partial_1^M\Omega\setminus E_n$. Let $R_h^{E_n}$ denote the reduction of $h$ (which is harmonic since $E_n\subset \partial_1^M\Omega$).  Since $h\in H_{+,b}(\Omega)$, $h$ is majorized by a constant in $U \cap \Omega$ where $U$ is a neighborhood of $E_n$ in $\hat\Omega$, so $R_h^{E_n}$ is a bounded harmonic function in $\Omega$ which vanishes $\omega_\E(\cdot,z_0)$-a.e. on the boundary. By the maximum principle \cite[Theorem 8.1]{GarHarmonicMeasure} it follows that $R_h^{E_n}=0$. Therefore,
\begin{align*}
h=R^{\partial_1^M\Omega}_h\leq R_h^{E_n}+R_h^{E_n^\mathsf{c}}=R_h^{E_n^\mathsf{c}}\leq h,
\end{align*}
where we used \cite[Lemma 8.2.2, Corollary 8.3.4]{ArmGar01}. That is, $h=R_h^{E_n^\mathsf{c}}$ for all $n$. We want to show that $\lim\limits_{n\to\infty}R_h^{\E_n^\mathsf{c}}=R^{\{\infty\}}_h$. By the definition of the reduction operator, we have $R^{\{\infty\}}_h\leq R_h^{\E_n^\mathsf{c}}$. Since for any open neighborhood, $W$, of $\infty$ in $\hat\Omega$, there exists $n$ such that $E_n^\mathsf{c}\subset W$, we obtain from \eqref{def:reduction} that $R_h^{\{\infty\}}=\lim\limits_{n\to\infty}R_h^{\E_n^\mathsf{c}}=h$. That is, in the language of potential theory, $\infty$ is a pole of $h$ and by \cite[Theorem 8.2.7]{ArmGar01}, $h=cM_\infty$.
\end{proof}

In his series of papers \cite{Lev89,Lev89part2,Lev89Par3}, Levin first systematically established the relation between extremal problems and comb mappings imposing Dirichlet-regularity on the set $\E$. Eremenko and Yuditskii \cite{ErY12} provided a modern approach to it, giving a detailed proof for comb mappings for Green functions as discussed above. It relies on the representation of Green functions for a compact set $E$, as
\begin{align}\label{eq:Green}
G_E(z,\infty)=\int_E\log|z-t|\dd\rho_E(t)+\g_E,
\end{align} 
where $\operatorname{Cap}(E)=e^{-\g_E}$ and $\rho_E(X)=0$ for sets of zero capacity. It is also discussed that the proof carry over for Martin functions and the corresponding description is given. Since we were not able to find in our generality a reference for a representation of the type \eqref{eq:Green}, which is certainly known to experts, for the readers convenience we survey the corresponding theory in the following. 

Since $M_\infty$ vanishes quasi-everywhere, we can extend $M_\infty$ to a subharmonic function to all of $\C$ by 
\begin{align}\label{eq:subharmExt}
M_\infty(x)=\limsup_{\substack{\z\to x \\ \z\in\Omega}}M_\infty(\z),\quad x\in\E,
\end{align}
see \cite[Theorem 5.2.1]{ArmGar01}. Hence, we obtain a subharmonic, symmetric function in $\C$, which is positive and harmonic in $\C_+$ and $\C_-$. For the following result we refer to \cite[Lemma 2.3]{Lev89part2} and its corollary. It was initially proved for majorants of subharmonic functions, but it is mentioned that it extends to the version stated below:
\begin{lemma}\label{lem:Levin1}
Let $v$ be a subharmonic, symmetric function in $\C$, which is positive and harmonic in $\C\setminus [b_0,\infty)$ for some $b_0\in\R$. Then 
\begin{align}\label{eq:HadamardRep}
v(\z)=v(z_*)+\int_{b_0}^\infty\log\left|1-\frac{\z-\z_*}{t-\z_*}\right|\dd\nu(t),\quad \int_{b_0}^\infty\frac{\dd\nu(t)}{t-\z_*}<\infty,
\end{align}
and for $y>0$ 
\begin{align}\label{eq:DerivNevanlinna}
\frac{\partial v(x+iy)}{\partial y}=\int_{b_0}^\infty\frac{y}{(t-x)^2+y^2}\dd \nu(t)>0.
\end{align}
\end{lemma}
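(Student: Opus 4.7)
Let $\nu := (2\pi)^{-1}\Delta v$ be the Riesz measure of $v$, a positive Radon measure on $\bbC$ supported on $[b_0,\infty)$ by the harmonicity of $v$ off this set. The key is to analyze $w := \partial_y v$ on $\bbC_+$. It is harmonic there, and by the symmetry $v(\overline z) = v(z)$ combined with harmonicity across $(-\infty, b_0)$, one has $w(x,0^+) = 0$ for $x < b_0$. Matching the distributional Laplacian $\Delta v = 2\pi\nu$ (via symmetric extension across $\bbR$) identifies $\pi\, d\nu = \partial_y v(\cdot, 0^+)\,dx$ on $(b_0,\infty)$ as measures, so the boundary values of $w$ on $\bbR$ are nonnegative. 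By the Poisson representation $w \geq 0$ on $\bbC_+$, and the Herglotz representation yields
\[
\partial_y v(x+iy) = \tilde\alpha\, y + \int_{b_0}^\infty \frac{y}{(x-t)^2+y^2}\, d\nu(t)
\]
for some $\tilde\alpha \geq 0$ (the Herglotz measure is identified with the Riesz measure $\nu$ by the boundary matching).

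To eliminate $\tilde\alpha$, integrate in $y$ along $x = z_*$ from $0$ to $y$:
\[
v(z_* + iy) - v(z_*) = \tfrac{\tilde\alpha}{2}\, y^2 + \tfrac{1}{2}\int_{b_0}^\infty \log\!\left(1+\tfrac{y^2}{(t-z_*)^2}\right) d\nu(t).
\]
Both summands on the right are nonnegative, and the left side is $O(y)$ as $y\to\infty$: indeed, $v$ is positive harmonic on $\bbC_+$, so its Herglotz representation gives $v(x+iy) \leq \alpha y + y \int d\eta(t)/((x-t)^2+y^2) = O(y)$ for fixed $x$. This precludes the quadratic $\tilde\alpha y^2/2$ term and forces $\tilde\alpha = 0$.

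With $\tilde\alpha = 0$, a parallel integration along $\bbC_+$ (incorporating $\partial_x v$ via the harmonic conjugate) produces the Hadamard formula
\[
v(z) - v(z_*) = \int_{b_0}^\infty \log\left|1 - \tfrac{z-z_*}{t-z_*}\right| d\nu(t)
\]
wherever the integral converges. For $z$ with $\Re z \neq z_*$, the integrand decays like $(z_*-\Re z)/(t-z_*)$ as $t\to\infty$, so convergence requires the moment condition $\int d\nu(t)/(t-z_*) < \infty$. This is established by noting $\partial_x v(z_*,0)$ is finite (since $v$ is harmonic, hence smooth, in a neighborhood of $(z_*,0)$) and differentiating the identity $v(s,0) - v(z_*) = \int \log|(s-t)/(z_*-t)|\,d\nu$ (valid for $s < b_0$, obtained from the path integration along the real segment) in $s$ at $s = z_*$, which gives $\partial_x v(z_*, 0) = -\int d\nu(t)/(t-z_*)$. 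The derivative formula then follows by differentiating the Hadamard representation in $y$ under the integral sign (legitimate by the moment bound), and strict positivity for $y > 0$ comes from positivity of the Poisson kernel together with nontriviality of $\nu$.

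The main obstacle is the twin growth-comparison arguments: showing $\tilde\alpha = 0$ (against the quadratic-versus-linear contrast above) and extracting the moment condition from the smoothness of $v$ at the interior point $(z_*, 0)$. Both steps depend crucially on the interplay of positivity, symmetry, and harmonic continuation of $v$ across $(-\infty, b_0)$, which together constrain the behavior of $v$ both near the real axis and at infinity.
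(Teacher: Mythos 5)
The paper does not give its own proof of this lemma; it cites Levin \cite[Lemma 2.3]{Lev89part2}, so there is no internal argument to compare against. Judged on its own merits, your approach — analyze $w = \partial_y v$, identify its distributional boundary values with $\pi\nu$, deduce a Herglotz representation, then integrate — is in the right spirit, but there are concrete gaps.

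The central gap is the assertion ``by the Poisson representation $w \geq 0$ on $\bbC_+$.'' A harmonic function on $\bbC_+$ whose distributional boundary values form a nonnegative measure need not be nonnegative: $w(z) = -\Im z$ is harmonic with boundary value $0$ but is negative everywhere in $\bbC_+$. Nonnegativity of $w$ (equivalently, validity of the Herglotz representation $w = \tilde\alpha\, y + \pi P\nu$ with $\tilde\alpha \geq 0$) requires an additional growth argument, e.g.\ a gradient estimate $|w(z)| \leq C\, v(z)/\Im z$ for positive harmonic $v$ plus Harnack to bound $w$ in sectors, combined with a Phragm\'en--Lindel\"of or half-plane minimum principle; you supply none of this. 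Without it, the subsequent integration-in-$y$ argument that eliminates $\tilde\alpha$ is resting on a representation you have not established.

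There is a second gap in the passage from $\partial_y v$ to the Hadamard formula. Since $\partial_y v = \Im F$ with $F(z) = \int (t-z)^{-1}\, d\nu(t)$ only determines $\partial_x v$ as $-\Re F + c_1$ for some real constant $c_1$, the ``parallel integration along $\bbC_+$'' yields $v(z) = v(z_*) + c_1\,\Re(z - z_*) + \int \log\left|1 - \frac{z-z_*}{t-z_*}\right|\, d\nu(t)$. The linear term is not eliminated by anything you wrote: your $\tilde\alpha$-argument is carried out on the vertical line $x = \Re z_*$ where $c_1\,\Re(z-z_*) = 0$. Ruling out $c_1 \neq 0$ requires a separate growth bound, most directly $v(x) = O(\sqrt{|x|})$ as $x \to -\infty$, which comes from the Herglotz representation of $f(\lambda) := v(b_0 + \lambda^2)$ as a positive harmonic function on $\bbC_+$ and is not used in your proposal. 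Relatedly, your derivation of the moment condition $\int d\nu(t)/(t-z_*) < \infty$ is circular as written: you extract it by differentiating an identity whose convergence already presupposes the moment condition. That identity (and the value of $c_1$) must instead be established first, via the growth bounds above, before the moment condition can be read off.
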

\begin{remark}
\eqref{eq:HadamardRep} is essentially the Hadamard representation for the subharmonic function $v$ and $\nu$ is its \textit{Riesz measure}. Usually the Hadamard representation would include a normalization term $\frac{\Re z}{t}$, which is not needed due to the convergence property of $\nu$ in \eqref{eq:HadamardRep}.
 \end{remark}

\begin{lemma}\label{lem:theta}
Let $\Theta$ be such that $\Im \Theta= M_\infty$ for $z\in\C_+$ and $\rho$ be the Riesz measure for $M_\infty$. Then, the functions $\Theta$ and $i\Theta'$ are Herglotz functions and in particular
\begin{align*}
i\Theta'(\z)=\int_\E\frac{\dd\rho(t)}{t-\z}.
\end{align*}
They can be analytically extended to $\C\setminus[b_0,\infty)$ and $\Theta'\neq 0$ there.
\end{lemma}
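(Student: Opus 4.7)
The plan is to define $\Theta$ on $\C_+$ by harmonic conjugation, extend it analytically to the simply connected set $\C\setminus[b_0,\i)$, and identify $i\Theta'$ with the Cauchy transform $F(\z):=\int_\E\dd\rho(t)/(t-\z)$ by matching imaginary parts in $\C_+$ and then pinning the resulting real constant on the interval $(-\i,b_0)$.

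Since $M_\i$ is positive and harmonic on the simply connected domain $\C_+$, there is an analytic $\Theta$ on $\C_+$, unique up to an additive real constant, with $\Im\Theta=M_\i$; positivity of $\Im\Theta$ yields that $\Theta$ is Herglotz. Lemma \ref{lem:Levin1} places the Riesz measure $\rho$ of $M_\i$ on $[b_0,\i)$, so $M_\i$ is in fact harmonic on the simply connected domain $\C\setminus[b_0,\i)$ and $\Theta$ extends analytically to it. Writing $v=M_\i$, the Cauchy--Riemann equations give $\Theta'=v_y+iv_x$ and hence $i\Theta'=-v_x+iv_y$. Formula \eqref{eq:DerivNevanlinna} supplies $v_y(x+iy)=\int_\E\frac{y}{(t-x)^2+y^2}\dd\rho(t)$, which is strictly positive for $y>0$ (the measure $\rho$ cannot vanish, for otherwise $M_\i$ would be identically zero, contradicting its positivity), so $i\Theta'$ is Herglotz. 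Since $\Im\frac{1}{t-\z}=\frac{y}{(t-x)^2+y^2}$, the Cauchy transform $F$ has the same imaginary part as $i\Theta'$ in $\C_+$, so $F-i\Theta'$ is a real constant there.

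To show this constant vanishes, I specialize to $\z=x\in(-\i,b_0)$. The Hadamard representation \eqref{eq:HadamardRep} reduces on this interval to $M_\i(x)=M_\i(\z_*)+\int_\E\log\frac{t-x}{t-\z_*}\dd\rho(t)$. Differentiation under the integral is justified by uniform boundedness of $1/(t-x)$ in $t\in\E\subseteq[b_0,\i)$ on compact subsets of $(-\i,b_0)$ together with the summability $\int_\E\dd\rho(t)/(t-\z_*)<\i$ from \eqref{eq:HadamardRep}, and it gives $\partial_x M_\i(x)=-F(x)$. The symmetry $v(x,y)=v(x,-y)$ forces $v_y(x,0)=0$, and hence by Cauchy--Riemann $u_x(x,0)=0$ as well, so $i\Theta'(x)=-v_x(x,0)=-\partial_x M_\i(x)=F(x)$ on $(-\i,b_0)$. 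The real constant therefore vanishes, and $i\Theta'=F$ on $\C_+$ and, by analytic continuation, on all of $\C\setminus[b_0,\i)$. Non-vanishing of $\Theta'$ follows at once: $\Im(i\Theta')>0$ on $\C_+$ rules out $\Theta'=0$ there; the symmetry $v(x,y)=v(x,-y)$ gives $\Im(i\Theta')(x,y)=-v_y(x,-y)\neq 0$ on $\C_-$; and for real $\z<b_0$, $i\Theta'(\z)=F(\z)>0$ because the integrand is strictly positive on $\E\subseteq[b_0,\i)$. The main technical step is the differentiation under the integral in the Hadamard formula and the resulting identification of the real constant on $(-\i,b_0)$; the remainder is standard Cauchy--Riemann and Herglotz bookkeeping.
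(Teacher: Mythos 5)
Your proof is correct but takes a genuinely different route from the paper's at the crucial step of eliminating the real constant. Both arguments first observe, by matching imaginary parts via \eqref{eq:DerivNevanlinna}, that $F(\z)-i\Theta'(\z)$ is a real constant on $\C_+$, where $F(\z)=\int_\E \dd\rho(t)/(t-\z)$. To show the constant is zero, the paper derives a second representation $i\Theta'(z)=i\int\dd\mu(t)/(t-z)^2$ from the Herglotz representation of $\Theta$ itself (this is \eqref{eq:Dec1}) and sends $z\to-\infty$, so that both this and $F(z)$ vanish by monotone convergence. You instead evaluate on the interval $(-\infty,b_0)$: the symmetry of $M_\infty$ gives $v_y(x,0)=0$ there, so $i\Theta'(x)=-\partial_x M_\infty(x)$ is real, and differentiating the Hadamard representation \eqref{eq:HadamardRep} under the integral sign (justified by the domination $\frac{1}{t-x}\le C\,\frac{1}{t-z_*}$ uniformly for $x$ in a compact subset of $(-\infty,b_0)$ and $t\ge b_0$, together with the integrability $\int_\E\dd\rho(t)/(t-z_*)<\infty$) identifies $-\partial_x M_\infty(x)=F(x)$. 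Your route is more elementary and self-contained, as it avoids invoking a separate Herglotz representation of $\Theta$, and it also yields the nonvanishing $i\Theta'=F>0$ on $(-\infty,b_0)$ directly from positivity of the integrand, rather than via the paper's monotonicity-and-vanishing-at-$-\infty$ argument. One small imprecision: to justify $\rho\not\equiv 0$ you claim $M_\infty$ would otherwise be identically zero; in fact the Hadamard representation would then make $M_\infty$ the positive constant $M_\infty(z_*)$, which still gives a contradiction, but with the defining property that the Martin function vanishes at Dirichlet-regular boundary points (equivalently, its minimality), not with positivity.
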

\begin{proof}
Applying Lemma~\ref{lem:Levin1} to $M_\infty$ gives a representation of the form \eqref{eq:HadamardRep} in terms of the Riesz measure $\rho$ supported on $\E$ and, in particular, $\int_\E \frac{\dd \rho(t)}{t-z_*} < \infty$. Moreover,
\begin{align}\label{eq:tPrime1}
i\Theta'(\z)=c_0+\int_\E\frac{\dd\rho(t)}{t-\z}
\end{align}
for some $c_0\in\R$, since the imaginary parts of the two sides are equal by \eqref{eq:DerivNevanlinna}. Since $\Theta$ is also a Herglotz function, for some measure $\mu$ supported on $\E$,
\begin{align}\label{eq:Dec1}
i\Theta'(z)=i\int\frac{\dd\mu(t)}{(t-z)^2},\quad \int\frac{\dd\mu(t)}{1+t^2}<\infty.
\end{align}
Using monotone convergence and taking the limit as $z\to-\infty$ in \eqref{eq:tPrime1} and \eqref{eq:Dec1} yields $\lim_{z\to-\infty}i\Theta'(z)=0=ic_0$. 
Since $i\Theta'$ is Herglotz, $\Theta'\neq 0$ in $\C_+$ and $\C_-$. Moreover, since it is increasing on  $(-\infty,b_0)$ and vanishes at $-\infty$ we obtain the final claim.
\end{proof}

The following lemma shows that, like the harmonic measure, $\rho$ gives zero measure to polar sets. Of course, once we introduce the Martin measure $\rho_\E$, it will be a scalar multiple of $\rho$, so the following claim will also hold for $\rho_\E$.
\begin{lemma}\label{lem:rhoPolar}
	Let $X\subset \C$ be a Borel polar set. Then $\rho(X)=0$.
\end{lemma}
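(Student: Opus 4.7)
My plan is to proceed by contradiction, exploiting the pointwise inequality $M_\infty\ge 0$ on $\C$ (which follows from extending $M_\infty$ via \eqref{eq:subharmExt} as a $\limsup$ of positive harmonic values). Assume $\rho(X) > 0$. Since $\rho$ is supported on $\E$ and is locally finite (as the Riesz measure of a subharmonic function that is everywhere finite), inner regularity of the Radon measure $\rho$ reduces the problem to a compact polar set $K \subset X \cap \E$ with $\sigma := \rho|_K$ a nonzero finite positive measure.

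The polar hypothesis then produces points in $K$ at which the subharmonic function $v(\zeta) := \int_K \log|\zeta-t|\,\dd\sigma(t)$ is very negative. Indeed, $\Cap(K) = 0$ forces every probability measure on $K$ to have infinite logarithmic energy, so $I(\sigma) = -\int_K v\,\dd\sigma = +\infty$; since $v$ is upper semi-continuous and hence bounded above on the compact set $K$, this forces $\sigma\{v < -N\} > 0$ for every $N>0$. We may therefore pick $z_1 \in K$ with $v(z_1) < -N$ (including possibly $v(z_1) = -\infty$).

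Next I would evaluate the Hadamard representation from Lemma~\ref{lem:Levin1} at $z_1$, using the factorization $\log|1-(\zeta-z_*)/(t-z_*)| = \log|t-\zeta| - \log(t-z_*)$ to split the integral over $\E$ into contributions from $K$ and $\E\setminus K$:
\begin{equation*}
M_\infty(z_1) = M_\infty(z_*) + \bigl(v(z_1) - v(z_*)\bigr) + w(z_1),
\end{equation*}
where $v(z_*) \in \R$ (since $\log(t-z_*)$ is bounded on the compact $K$ by $b_0 > z_*$) and $w(z_1) := \int_{\E\setminus K} \log|1-(z_1-z_*)/(t-z_*)|\,\dd\rho(t)$. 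The key estimate is a uniform upper bound $w(z_1) \le C$ for $z_1 \in K$: on bounded portions of $\E$ the integrand is bounded above for $z_1$ in the compact $K$ and $\rho$ is locally finite, while on the tail one has $\log|1-(z_1-z_*)/(t-z_*)| = O(1/(t-z_*))$, which is $\rho$-integrable by the convergence condition $\int\dd\rho(t)/(t-z_*)<\infty$ in Lemma~\ref{lem:Levin1}.

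Combining these bounds yields $M_\infty(z_1) \le M_\infty(z_*) - v(z_*) + C - N \to -\infty$ as $N\to\infty$, contradicting $M_\infty(z_1) \ge 0$. The main technical point is the uniform upper bound on $w(z_1)$: although $w(z_1)$ could itself be $-\infty$ (if $\rho$ charges a neighborhood of $z_1$ outside $K$), only the upper bound matters here, and it is secured by separating the near field (where $\rho$-mass on bounded sets is finite) from the far field (where the tail decay of the integrand combines with $\int \dd\rho/(t-z_*) < \infty$).
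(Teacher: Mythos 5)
Your proof is correct, and it takes a route that is dual to the paper's. The paper argues forward: it shows that $\rho$ restricted to any compact piece $[b_0,s]$ has finite logarithmic energy, by integrating the pointwise inequality $M_\infty \ge 0$ (valid for the subharmonic extension) against $\dd\rho$ over $[b_0,s]$ and using the Hadamard representation to split the resulting double integral into a finite term $d$, the energy term $I_1$, and a tail $I_2\le 0$; it then invokes the general criterion \cite[Theorem 3.2.3]{RansPotential} that a measure of finite energy does not charge polar sets. You argue by contradiction: you reduce, via inner regularity, to a compact polar set $K$ carrying positive $\rho$-mass, note that polarity forces $I(\rho|_K) = +\infty$ so that $v(\cdot)=\int_K\log|\cdot - t|\,\dd\rho(t)$ must dip below $-N$ on a set of positive $\sigma$-measure for every $N$, and then evaluate the Hadamard representation \emph{pointwise} at such a point $z_1\in K$ to contradict $M_\infty(z_1)\ge 0$. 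Both arguments use exactly the same ingredients --- non-negativity of the extended $M_\infty$, the Hadamard representation from Lemma~\ref{lem:Levin1}, and the Blaschke-type tail condition $\int \dd\rho(t)/(t-z_*)<\infty$ --- but you essentially unfold the Ransford criterion into an explicit pointwise contradiction rather than citing it, which makes your proof a bit longer but more self-contained. One small imprecision: local finiteness of the Riesz measure $\rho$ is automatic from the Riesz decomposition theorem for any subharmonic function, not a consequence of $M_\infty$ being everywhere finite (though $M_\infty$ \emph{is} everywhere finite here, since it is non-negative and u.s.c.); this does not affect your argument.
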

\begin{proof}
	By \cite[Theorem 3.2.3]{RansPotential} it suffices to show that for each $s>b_0$ we have
	\begin{align}\label{eq:Nov27}
		\int_{b_0}^s\int_{b_0}^s\log|x-t|\dd\rho(x)\dd\rho(t)>-\infty
	\end{align}  
	By means of the subharmonic extension \eqref{eq:subharmExt}, $M_\i$ is non-negative on $\C$ and we get 
	\begin{align*}
		0\leq\int_{b_0}^{s}M_\infty(x)\dd\rho(x)=d+I_1+I_2,
	\end{align*}
	where
	\begin{align*}
	d&=\rho(b_0,s)\left(1-\int_{b_0}^{s}\log|t-\z_*|\dd\rho(t)\right),\quad I_1=\int_{b_0}^{s}\int_{b_0}^{s}\log|x-t|\dd\rho(t)\dd\rho(x),\\ I_2&=\int_{b_0}^{s}\int_{s}^{\infty}\log\left|1-\frac{x-\z_{*}}{t-\z_*}\right|\dd\rho(t)\dd\rho(x).
	\end{align*}
	Since $I_2\leq 0$, it follows that $-\infty<-d\leq I_1$, i.e., we have \eqref{eq:Nov27}.
\end{proof}

It was already encountered in \cite[Lemma 2.4]{Lev89part2} that there is an explicit connection between $\rho$ and the conformal map $\Theta$ defined in Lemma \ref{lem:theta}, see also \cite{ErY12}. Note that although in \cite{Lev89part2} Dirichlet-regularity is assumed for the set $\E$ the proof of the following lemma holds also in our setting. Namely, the Lebesgue measure on the base of the comb corresponds to the measure $\rho$ on $\E$. To be more precise, $\Re\Theta$ extends continuously to $\R$ and we have 
\begin{align}\label{eq:thetaMeasure}
\Re\Theta(b)-\Re\Theta(a)=\pi \rho((a,b)).
\end{align}

These are all the ingredients needed to describe the comb domains related to the conformal mapping $\Theta$. There exists a positive upper semicontinuous function $\height$ on $(0,b)$, where $b\in(0,\infty]$ such that $\Theta$ maps $\C_+$ conformally onto
\begin{align*}
	\Pi=\{x+iy:\ 0<x<b, y>\height(x)\}.
\end{align*}
If $b<\infty$ then $\limsup_{x\to b}\height(x)=\infty$. We will show in Corollary \ref{cor:InftyDirichletRegular} that $b$ being finite corresponds to $\infty$ being not Dirichlet-regular. 
\begin{example}
In their classical work \cite{MarOst75} Marchenko--Ostrovskii studied the relation between spectra of 1-periodic $L^2$ potentials on the real line and corresponding data of the mapping $\Theta_\E$. They showed that $\E$ is the spectrum of a Schr\"odinger operator of this type if and only if the corresponding comb domain is of the form 
\begin{align*}
\Pi_\E=\{x+iy, x>0, y>0\}\setminus\{k\pi+iy: k\in \bbN, 0\leq y\leq \height_k\},
\end{align*}
and the slit heights $\height_k$ satisfy $\sum_{k=1}^{\infty}k^2\height_k^2<\infty$. 
\end{example}

The next example will construct examples of Akhiezer--Levin sets which don't have an expansion of the form \eqref{Martinexp2term}.

\begin{example}
We will construct an explicit expression for the conformal map, $\Theta:\C_+ \to \Pi=\C_+\setminus \{n+iy: n\in \Z, 0<y< y_0\}$, where $y_0>0$ is an arbitrary but fixed parameter. We will show that along the imaginary axis we have
\begin{align*}
\Theta(iy)=iy+ic(y_0)+o(1),\quad \text{as }y\to\infty,
\end{align*}
where, $c(y_0)$ is a real constant that depends monotonically on $y_0$ and can attain in fact any real value.
Note that $\Theta$ can be continuously extended to $\R$ and that $E:=\Theta^{-1}(\R)$ is symmetric, $E=-E=\{-x, x\in E\}$. Hence, again by $\tilde \Theta(z)=\Theta(\l^2)$, $M(z)=\Im \tilde \Theta(z)$ provides an example for a Martin function of an Akhiezer--Levin set, which has a constant term in its asymptotic expansion. The Christoffel--Darboux transformation 
\begin{align*}
f_1(w)=\frac{1}{\pi}\int_{-1}^w\frac{\dd x}{\sqrt{1-x^2}}
\end{align*}
maps $\C_+$ onto $\Pi_1=\{\vartheta=\xi+i\eta: \eta>0, 0<\xi<1\}$. In particular $f_1(-1)=0$ and $f_1(1)=1$. We choose $\ell > 1$ so that $iy_0 = f_1(-\ell)$ and consider
\begin{align*}
f_2(w)=\frac{1}{\pi}\int_{-\ell}^w\frac{\dd x}{\sqrt{\ell^2-x^2}}=f_1(w/\ell).
\end{align*}
Then $\Theta = f_1\circ f_2^{-1}$ defines a conformal map $\Theta:\Pi_1\to\Pi_1$ such that $\Theta(0)=iy_0$. By symmetry, we can extend $\Theta$ to a conformal map from $\Theta:\C_+\to\Pi$. Calculations of $f_1, f_2$ along the imaginary axis give
$
\Theta(iy)=i\cosh^{-1}(\ell\cosh(y)),
$ 
so
\begin{align*}
\Theta(iy)=iy+i\log(\ell)+o(1),\quad \text{as }y\to\infty.
\end{align*}
\end{example}

We emphasize that in order to show that the limit in \eqref{eq:GrowthMartin} is always finite for the Martin function, it was only used that $M_\infty$ represents a positive harmonic function in $\Omega$. This shows that the same conclusion holds for any such function. In view of \eqref{eq:RepPositHarm} this growth should also be reflected in the corresponding asymptotic behavior of $M_\infty$, leading to the following criterion for $\E$ to be an Akhiezer--Levin set. 
\begin{lemma}\label{lem:AkhiezerLevin}
Assume that there exists a positive harmonic function in $\Omega$ such that 
\begin{align*}
\lim\limits_{z\to-\infty}\frac{h(z)}{\sqrt{-z}}=1.
\end{align*}
Then $\Omega$ is Greenian and $\E$ is an Akhiezer--Levin set. Moreover, in this case we have
\begin{align}\label{eq:lowerBound}
M_\E(\z)\leq h(\z),
\end{align}
for all $\z\in\Omega$, where $M_\E$ is normalized by $\lim_{z\to-\infty}M_\E(\z)/\sqrt{-z}=1$.
\end{lemma}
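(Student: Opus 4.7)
The plan is to combine the Martin representation of $h$ with the Herglotz representation on $\bbC_+$ obtained via the conformal change of variables $\phi(\lambda)=\lambda^2+b_0$. First, the hypothesis forces $h$ to be unbounded, hence nonconstant. If $\Omega$ were not Greenian then $\bbC\setminus\Omega$ would be polar, so by local removable singularities for positive harmonic functions $h$ would extend harmonically to $\bbC$, and a Liouville-type theorem for positive harmonic functions would force $h$ to be constant --- a contradiction. Thus $\Omega$ is Greenian, and \eqref{eq:RepPositHarm} yields a finite measure $\nu$ on $\partial^M_1\Omega$ with $h(z)=\int M_\E(z,x)\dd\nu(x)$. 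Since $\cP_\E=\cP_\E(\infty)$ is one-dimensional, $\pi^{-1}(\{\infty\})$ consists of a single point; writing $c=\nu(\{\infty\})$ and $\nu_0=\nu|_{\partial^M_1\Omega\setminus\{\infty\}}$ gives the splitting
\[
h = h_0 + c\,M_\infty, \qquad h_0(z)=\int_{\partial^M_1\Omega\setminus\{\infty\}} M_\E(z,x)\dd\nu_0(x)\ge 0.
\]

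Next, I would exploit that $\phi$ is a conformal bijection from $\bbC_+$ onto $\bbC\setminus[b_0,\infty)\subset\Omega$, so the pullback of any positive harmonic function $g$ on $\Omega$ is positive harmonic on $\bbC_+$ and admits the Herglotz representation \eqref{eq:PosHarmonicFunc}. Evaluating at $\lambda=iy$ as $y\to\infty$ and comparing with $z=b_0-y^2$ shows that the coefficient of $y$ in that representation equals $\lim_{z\to-\infty}g(z)/\sqrt{-z}$. Applying this to $h$, $M_\infty$, and $h_0$ with respective coefficients $1$, $\alpha$, and $a_0$, linearity of the Herglotz decomposition gives the identity $1 = a_0 + c\alpha$, so the whole argument reduces to proving $a_0 = 0$.

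For that last step, each $x\in\partial^M_1\Omega\setminus\{\infty\}$ has $\pi(x)$ a finite point of $\E$, and $M_\E(\cdot,x)\in\cP_\E(\pi(x))$ is bounded outside any neighborhood of $\pi(x)$; in particular it is bounded along the negative real axis, so the coefficient of $y$ in the Herglotz representation of its pullback $m_x=M_\E(\phi(\cdot),x)$ vanishes. Applying Fubini's theorem to $h_0\circ\phi=\int m_x\dd\nu_0(x)$ rewrites $h_0\circ\phi$ as a Poisson integral of the measure $\Lambda=\int \eta_{m_x}\dd\nu_0(x)$ (where $\eta_{m_x}$ is the boundary Herglotz measure of $m_x$), with no $y$-term; by uniqueness of the Herglotz decomposition this forces $a_0=0$. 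Then $c\alpha=1$, so $\alpha>0$ and $\E$ is Akhiezer--Levin; with $M_\E=M_\infty/\alpha$ the inequality $h=h_0+c\,M_\infty\ge c\,M_\infty=M_\E$ is exactly \eqref{eq:lowerBound}. The hard part will be the rigorous justification of the Fubini exchange and the measurability of the assignment $x\mapsto \eta_{m_x}$ as a measure-valued function on $\partial^M_1\Omega$; both can be derived from the continuity of the Martin kernel $M_\E(z,x)$ in $x$ with respect to the Martin topology, with the uniform bound $\int\dd\eta_{m_x}(t)/(t^2+v^2)\le\int\dd\eta_{m_x}(t)/(1+t^2)$ for $v\ge 1$ available should a dominated convergence step be needed.
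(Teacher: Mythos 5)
Your proposal is correct, and it takes a genuinely different route from the paper. Where you split the Martin representation explicitly into $h = h_0 + c\,M_\infty$ and show by hand that the pullback of $h_0$ has vanishing linear coefficient, the paper instead invokes two results from minimal-fine potential theory: \cite[Theorem 9.2.6]{ArmGar01} (the minimal-fine limit theorem, applied using the fact that $(-\infty,b_0)$ is not minimally thin at $\infty$) gives $\liminf_{z\to -\infty} h(z)/M_\infty(z) \le \nu(\{\infty\})$, and \cite[Theorem 9.3.3]{ArmGar01} gives $\nu(\{\infty\}) = \inf_{z\in\Omega} h(z)/M_\infty(z)$, from which both the Akhiezer--Levin property and \eqref{eq:lowerBound} drop out immediately. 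Your approach buys elementariness at the price of the $a_0 = 0$ step, which in the paper's version is absorbed into Theorem 9.3.3; the paper's approach buys brevity at the price of citing deeper machinery. Two small remarks on your argument. First, for the Greenian step, a positive harmonic function on $\bbC\setminus\E$ with $\E$ polar extends only to a positive \emph{superharmonic} function on $\bbC$ a priori, not a harmonic one; but positive superharmonic functions on $\bbC$ are constant, so the Liouville conclusion still holds (this is essentially an unwinding of Myrberg's theorem, which the paper cites directly). Second, the Fubini exchange you flag as the hard part is avoidable: since for positive harmonic $g$ on $\bbC_+$ one has $g(iy)/y \le g(i)$ for $y\ge 1$, one can apply dominated convergence to $\frac{h_0(\phi(iy))}{y} = \int \frac{m_x(iy)}{y}\,\dd\nu_0(x)$ directly, with dominating function $m_x(i)$ (which is $\nu_0$-integrable because $\int m_x(i)\,\dd\nu_0(x) = h_0(\phi(i)) < \infty$), and each integrand tends to $0$ pointwise since $M_\E(\cdot,x) \in \cP_\E(\pi(x))$ is bounded along the negative half-line; this sidesteps the measurability of $x\mapsto\eta_{m_x}$ entirely.
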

\begin{proof}
By Myrberg's theorem \cite[Theorem 5.3.8]{ArmGar01} the existence of a non-constant positive harmonic function on $\Omega$ implies that $\Omega$ is Greenian. Since $h$ is a positive harmonic function in $\Omega$ there exists a unique measure $\nu$ with $\nu(\partial^M_1\Omega)=h(\z_*)$ such that
\begin{align*}
h(\z)=\int_{\partial^M_1\Omega}M(\z,x)\dd\nu(x).
\end{align*}
In particular, $\nu(\{\infty\})<\infty$. Recall that $\# \pi^{-1}(\{\infty\}) = 1$. Since $(-\infty,b_0)\subset\Omega$, the negative half axis is clearly not minimally thin at $\infty$ so it follows by \cite[Theorem 9.2.6]{ArmGar01} that 
\begin{align}\label{eq:Nov19}
\liminf_{z\to-\infty}\frac{h(\z)}{M_\infty(\z)}\leq\nu(\{\infty\})<\infty.
\end{align}
Let $\l^2=\z-b_0$ and $g(\l)=h(\z)$ and $f(\l)=M_\infty(\z)$. Then $f$ defines a positive harmonic function in $\C_+$ and 
$$
f(x+iy)=ay+\int\frac{y}{(x-t)^2+y^2}\dd \mu(t),\quad a=\lim\limits_{y\to\infty}\frac{f(iy)}{y}.
$$
Hence,
\begin{align*}
0<\limsup_{\z\to-\infty}\frac{M(\z)}{h(\z)}=\limsup_{y\to\infty}\frac{f(iy)}{g(iy)}=\limsup_{y\to\infty}\frac{f(iy)}{y}=a.
\end{align*}
Hence, $\E$ is an Akhiezer--Levin set. Due to \cite[Theorem 9.3.3]{ArmGar01} we have
\begin{align}\label{eq:Nov19_2}
\nu(\{\infty\})=\inf_{\z\in\Omega}\frac{h(\z)}{M_\infty(\z)}\leq \frac{h(\z)}{M_\infty(\z)}.
\end{align}
and the second claim follows. Finally, \eqref{eq:Nov19_2} shows that we actually have equality in $\eqref{eq:Nov19}$ and it follows that $\nu(\{\infty\})$ corresponds to the normalization of $M_\infty$ at $\infty$. 
\end{proof}

Carleson and Totik \cite{CarTot04} showed that $\cP_\E(x_0)$ being two-dimensional is equivalent to the fact that $G(\z,z_0)$ is Lipschitz continuous at $x_0$, where $z_0$ is some arbitrary interior point. As a corollary of the comb mapping representation for $\Theta$, we show that $\E$ being an Akhiezer--Levin set implies continuity at infinity. Note that by the aforementioned equivalence, one cannot hope for Lipschitz continuity for semibounded sets, since in this case $\cP_\E(\infty)$ is always one-dimensional. Alternatively, this could be seen from the fact that often, at a gap edge $a$, the Green function has behaviour $G(z,z_0)\sim \sqrt{z-a}$ and thus is not Lipschitz continuous. Moreover, as discussed in \cite{VoYud16} the set $\E=\R_+\setminus\cup_{n\in\Z}r^n(a_1,b_1)$, where $0<a_1<b_1$ and $r>1$ provides an example of a set for which $\infty$ is Dirichlet-regular, but which is not an Akhiezer--Levin set. In this sense the following result is optimal. 
\begin{corollary}\label{cor:InftyDirichletRegular}
Let $\E\subset \R$ be closed and semibounded and $\Theta$ the corresponding comb-mapping. If $\sup\{\Re\Theta(z): z\in\C_+\}=\infty$, then $\infty$ is a Dirichlet-regular point of $\E$. This holds in particular if $\E$ is an Akhiezer--Levin set.
\end{corollary}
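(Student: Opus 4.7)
Set $b := \sup\{\Re\Theta(z) : z \in \C_+\}$; the statement splits into two implications: (i) Akhiezer--Levin implies $b = \infty$, and (ii) $b = \infty$ implies $\infty$ is Dirichlet-regular for $\E$.

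For (i), I argue by contradiction. Assume $b < \infty$; then $\Theta$ maps $\C_+$ into the strip $\{w : 0 < \Re w < b\}$, and $f(z) := \exp(i\pi\Theta(z)/b)$ is a nonzero analytic self-map of $\C_+$, i.e., a Herglotz function, with $|f(iy)| = \exp(-\pi M_\infty(iy)/b)$. The Akhiezer--Levin bound \eqref{eq:MartinLowerBound} evaluated at $z = iy$ gives $M_\infty(iy) \geq a\,\Re\sqrt{b_0 - iy} \sim (a/\sqrt{2})\sqrt{y}$ as $y \to \infty$, so $|f(iy)| \leq e^{-c\sqrt{y}}$ for some $c > 0$. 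Then $g(z) := -1/f(z)$ is again a Herglotz function on $\C_+$, with $|g(iy)| \geq e^{c\sqrt{y}}$. But the Nevanlinna representation forces $|g(iy)| = O(y)$ as $y \to \infty$ (since $g(iy)/y \to i\beta \in i[0,\infty)$ by dominated convergence applied to the Poisson-type kernel against the representing measure), contradicting the super-polynomial growth. Hence $b = \infty$.

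For (ii), I use the relation between $\Theta$ and the Green-function comb maps $\theta_{z_0}$. Normalize so that $\Re \theta_{z_0}(z_*) = 0$ and set $\phi_{z_0}(z) := \theta_{z_0}(z)/G_\E(z_*, z_0)$. Then $\phi_{z_0}$ maps $\C_+$ conformally onto the rescaled comb $\Pi_{z_0}/G_\E(z_*, z_0)$ of horizontal width $\pi/G_\E(z_*, z_0)$, with $\phi_{z_0}(z_*) = i$. The defining limit $G_\E(z, z_0)/G_\E(z_*, z_0) \to M_\infty(z)$ as $z_0 \to -\infty$ gives $\Im \phi_{z_0} \to \Im \Theta$, and Harnack's principle combined with a normal-families argument upgrades this to $\phi_{z_0} \to \Theta$ locally uniformly on $\C_+$. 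Carathéodory's kernel theorem then gives convergence of the image domains with respect to the fixed base point $i$. In particular, the horizontal widths converge: $\pi/G_\E(z_*, z_0) \to b$. Hence $b = \infty$ forces $G_\E(z_*, z_0) \to 0$ as $z_0 \to -\infty$. By the symmetry $G_\E(z_*, z_0) = G_\E(z_0, z_*)$ together with a Harnack-chain argument propagating this limit along the arcs $\{|z|=R\} \cap \Omega$ (connected for $R$ large, since $\E$ is semibounded), we deduce $G_\E(z, z_*) \to 0$ for every approach of $z$ to $\infty$ in $\Omega$, which is the standard barrier criterion for Dirichlet-regularity of $\infty$.

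The main technical obstacle is the width-convergence step in (ii): while Carathéodory kernel convergence of conformal maps under fixed-base-point normalization is standard, the image combs contain infinitely many slits, so some care is needed to verify that the horizontal envelope is correctly captured by the kernel. An alternative could start from formula \eqref{eq:thetaMeasure}, which yields $b = \pi \rho(\E)$; the corollary would then reduce to showing $\rho(\E) = \infty$ (following in the Akhiezer--Levin case from the Hadamard representation of $M_\infty$, since finite $\rho(\E)$ only permits $O(\log|z|)$ growth), together with a direct potential-theoretic implication $\rho(\E) = \infty \Rightarrow \infty$ Dirichlet-regular.
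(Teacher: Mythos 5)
Your proof of the implication ``Akhiezer--Levin $\Rightarrow \sup\Re\Theta = \infty$'' is correct and takes a genuinely different route from the paper. The paper shows that $\sup\Re\Theta < \infty$ forces the Riesz measure $\rho$ to be finite via \eqref{eq:thetaMeasure}, and then uses the Hadamard representation \eqref{eq:HadamardRep} to conclude that $M_\infty$ grows at most like $\rho(\R)\log|z|$, contradicting \eqref{eq:GrowthMartin}. Your argument instead exponentiates: if the comb has finite width $b$, then $f=\exp(i\pi\Theta/b)$ is a Herglotz function with $|f(iy)|\le e^{-c\sqrt{y}}$, so $-1/f$ is a Herglotz function with superpolynomial growth along $i\R_+$, which contradicts the Carath\'eodory bound \eqref{eqn:Caratheodory}. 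This is self-contained and arguably more direct, since it avoids introducing the Riesz measure.

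Your proof of the implication ``$\sup\Re\Theta = \infty \Rightarrow \infty$ Dirichlet-regular'' has a genuine gap. The intermediate conclusion $G_\E(z_*,z_0)\to 0$ as $z_0\to -\infty$ is correct (and in fact can be obtained more simply than via Carath\'eodory kernel convergence: since $\Re\theta_{z_0} \le \pi$ always, $\Re\Theta(z) = \lim_{z_0\to-\infty}\Re\theta_{z_0}(z)/G_\E(z_*,z_0) \le \liminf_{z_0\to-\infty}\pi/G_\E(z_*,z_0)$, so $\limsup G_\E(z_*,z_0) \le \pi/\sup\Re\Theta$, which sidesteps the width-convergence subtlety you flagged). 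The problem is the final Harnack-chain step. A Harnack chain in $\Omega$ from $-R$ to a point $Re^{i\epsilon}$ on the arc $\{|z|=R\}\cap\Omega$ must have hyperbolic length of order $\log(1/\epsilon)$, so the Harnack constant blows up as $\epsilon\to 0$; thus the argument only controls $G_\E(z,z_*)$ on nontangential approaches to $\infty$. Tangential approaches — including approaches through the gaps of $\E$ near $\infty$ and approaches near potential Dirichlet-irregular points of $\E$ — are not controlled. More concretely, a positive harmonic function on $\C_+$ can vanish along $i\R_+$ and along $(-\infty,b_0)$ while remaining bounded away from zero along a tangential path $x_n+iy_n$ with $y_n/x_n\to 0$, so the pointwise smallness you have established on the imaginary and negative real axes does not, by itself, propagate to all of $\Omega$. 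The paper bridges exactly this gap with a comb-specific argument: vanishing of the radial limit of $G_\E(\cdot,z_*)$ implies $\limsup_{u\to 0}\height(u)=0$ for the height function of the comb $\Pi_{z_*}$, which after the symmetric extension to $\Pi_e$ yields local connectivity of the comb at $0$, and hence a continuous extension of $\theta_{z_*}$ to $\infty$ — this controls all approaches at once and is the step your argument is missing.
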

\begin{proof} 
We will assume that $\limsup_{z_0\to-\infty}G(z_0,\z_*)=\e>0$ in order to obtain a contradiction. Note that $\sup\{\Re \theta_{z_0}(z):\ z\in\C_+\}=\pi$, so for any $z \in \bbC_+$,
\begin{align*}
\lim\limits_{z_0\to-\infty} \frac{\Re \theta_{z_0}(z)}{G(z_*,z_0)}\leq \liminf\limits_{z_0\to-\infty} \frac{\pi}{G(z_*,z_0)}.
\end{align*}
Since $\Theta(z) = \lim_{z_0\to-\infty} \frac{\theta_{z_0}(z)}{G(z_*,z_0)}$, taking the supremum over $z\in\C_+$ gives
$
\sup\{\Re \Theta(z):\ z\in\C_+\} \leq \e^{-1}\pi < \infty.
$
Now, as already mentioned in \cite{ErY12}, using upper semicontinuity of $h$ it follows that vanishing of the radial limit of $G(z_0,\z_*)$ implies Dirichlet-regularity. Let $\Im\theta_{z_*}=G(z,z_*)$ and it will be more convenient to shift the mapping by $-\pi$. Then,  $\lim_{z_0\to-\infty}G(z_0,\z_*)=0$ implies that $\limsup_{u\to 0}h(u)=0$. Therefore, $(-\infty,\z_*)$  is mapped by $\theta_{z_*}$ onto $i\R_+$ and we can extend $\theta_{z_*}$ by symmetry to $\C\setminus(\R\setminus(-\infty,z_*))$. In particular $i\R_+$ is an interior ray of the image, $\Pi_e=\Pi_{z_*}\cup i\R_+ \cup \{-x+iy: x+iy \in\Pi_{z_*}\}$, of this extended map. $\limsup_{u\to 0}h(u)=0$ now implies that $\Pi_e$ is locally connected at $0$ and hence, $\theta_{z_*}$ can be continuously extended to $0$ which implies that $\infty$ is a Dirichlet-regular point. This finishes the proof of the first claim.

In view of \eqref{eq:thetaMeasure}, $\sup\{\Re\Theta(z): z\in\C_+\}<\infty$ means that $\rho$ is finite. We show that this implies that $M_\infty$ can grow at most like $\rho(\R)\log|z|$ and therefore $\E$ is not an Akhiezer--Levin set. Let's assume that $|z_*-b_0|>1$ and $z_*<0$. Then, using \eqref{eq:HadamardRep} we see that for $z<z_*$ we have 
\begin{equation*}
M_\infty(z)-\rho(\R)\log|z|=M_\infty(z_*)+\int_{b_0}^\infty\log\left|\frac{1}{t-\l_*}\left(1-\frac{z_*}{z}\right)\right|\dd\rho(t)\leq M_\infty(z_*). \qedhere
\end{equation*}
\end{proof}

For Akhiezer--Levin sets one could also use the result of Carleson and Totik and the substitution $\l^2=z-b_0$ to see that $G$ is H\"older continuous with exponent $1/2$ at $\infty$.


\section{Asymptotic behavior of eigensolutions}

We now turn our attention to the Schr\"odinger operator $L_V$ and associated objects. Fundamental solutions at $z\in \C$ are defined as solutions $u(x,z), v(x,z)$ of the initial value problems
\begin{align}
-\partial_x^2 u + (V(x) -z) u =0,&  \qquad u(0,z)=0, \qquad (\partial_x u)(0,z) = 1  \label{defnu} \\
-\partial_x^2 v + (V(x) -z) v =0, & \qquad v(0,z)=1, \qquad (\partial_x v)(0,z) = 0 \label{defnv}
\end{align}
The natural regularity class for the solutions are functions which are in $W^{2,1}([0,x])$ for every $x < \infty$, and the differential equations are interpreted as equality of $L^1$ functions, i.e., equality Lebesgue-a.e.. It is useful to substitute
\[
k = \sqrt{-z}
\]
and view the initial value problems as perturbations by $V$ of $-\partial_x^2+k^2$. We will always assume that $\Re k \ge 0$; this can be done pointwise throughout $\C$, and later we will view $k$ as a branch of the square root such that $\Re k > 0$ if $z \in \C \setminus [0,\infty)$. Note also that this makes $\Im k < 0$ if $z \in \C_+$. By choosing the branch $\sqrt{z}=ik$, we see that $\sqrt{z}\in \C_+$ if $z\in \C \setminus [0,\infty)$. In particular, $\Im \sqrt{z}=\Re k$.

The fundamental solutions for $V=0$ are the functions
\[
c(x,k) = \cosh(kx), \qquad s(x,k) = \begin{cases} \frac{ \sinh(kx)}{k} & k \neq 0 \\ x & k = 0 \end{cases}.
\]
By standard arguments,  for general $V\in L^1([0,1])$, the initial value problems \eqref{defnu}, \eqref{defnv} are rewritten as integral equations, and by Volterra-type arguments, convergent series representations are then found for the fundamental solutions. With the notation
$\Delta_n(x) = \{ t \in \R^n \mid x \ge t_1 \ge t_2 \ge \dots \ge t_n \ge 0 \}$, the series representations for fundamental solutions and their first derivatives are 
\begin{align}
u(x,z) & = s(x,k) + \sum_{n=1}^\infty \int_{\Delta_n(x)} s(x-t_1,k) \left( \prod_{j=1}^{n-1} V(t_j) s(t_{j}-t_{j+1},k) \right) V(t_n)  s(t_n,k) \dd^n t \label{fundamentalseries1} \\
v(x,z) & = c(x,k) + \sum_{n=1}^\infty \int_{\Delta_n(x)} s(x-t_1,k) \left( \prod_{j=1}^{n-1} V(t_j) s(t_{j}-t_{j+1},k) \right) V(t_n)  c(t_n,k) \dd^n t \label{fundamentalseries2} \\
(\partial_x u)(x,z) & = c(x,k) + \sum_{n=1}^\infty \int_{\Delta_n(x)} c(x-t_1,k) \left( \prod_{j=1}^{n-1} V(t_j) s(t_{j}-t_{j+1},k) \right) V(t_n)  s(t_n,k) \dd^n t \label{fundamentalseries3}\\
(\partial_x v)(x,z) & = k^2 s(x,k) + \sum_{n=1}^\infty \int_{\Delta_n(x)} c(x-t_1,k) \left( \prod_{j=1}^{n-1} V(t_j) s(t_{j}-t_{j+1},k) \right) V(t_n)  c(t_n,k) \dd^n t \label{fundamentalseries4}
\end{align}
These expansions are derived, e.g., in \cite{PoschlTrubowitz87} for $V\in L^2([0,x])$, but they hold for $V\in L^1([0,x])$ as well, due to the estimate
\begin{equation}\label{simplexestimate}
\left\lvert \int_{0}^{x} \int_{0}^{t_1} \dots \int_{0}^{t_{n-1}}  e^{ \Re k (x -t_1)}  \left( \prod_{j=1}^{n} V(t_j) e^{ \Re k (t_{j} - t_{j+1})}  \right) V(t_n) e^{ \Re k  t_n} \, dt_n \dots dt_2\, dt_1  \right \rvert  \le \frac 1{n!} \left( \int_0^x \lvert V(s) \rvert \dd s \right)^n e^{ \Re k  x}
\end{equation}
which is proved by combining the exponentials and using permutations of $t$ and symmetry, and the elementary estimates which follow directly from Euler's formula,
\begin{equation}\label{eqn:Euler}
\lvert c(x,k) \rvert  \le e^{ \Re k  x},  \qquad
\lvert s(x,k) \rvert  \le \lvert k\rvert^{-1}  e^{ \Re k  x}.
\end{equation}
The same estimates which guarantee convergence, provide exponential upper bounds on eigensolutions; these are often stated over a fixed interval, but we will need a kind of uniformity in $x$:

\begin{lemma}
For all $z = -k^2 \in \bbC$ and $x > 0$,
\begin{equation}\label{25nov5}
\lvert u(x,-k^2) \rvert \le e^{(1+\Re k) x + \int_0^x \lvert V(t) \rvert dt}.
\end{equation}
\end{lemma}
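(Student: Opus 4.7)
The plan is to apply the series representation \eqref{fundamentalseries1} for $u(x,-k^2)$ together with a Volterra-type estimate, but with a sharper bound on $s(y,k)$ than $|k|^{-1} e^{(\Re k) y}$ from \eqref{eqn:Euler}. That naive bound yields \eqref{simplexestimate} but degenerates as $k\to 0$ and would not give the clean target \eqref{25nov5}. Instead I would establish
\[
|s(y,k)| \le e^{(1+\Re k) y}, \qquad y \ge 0,\ \Re k \ge 0,
\]
and run the Volterra argument with this estimate.

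To prove the improved bound, I would use the identity $s(y,k) = \int_0^y \cosh(ks)\, \dd s$, which is valid for $k\ne 0$ and extends continuously to $k=0$. Bounding $|\cosh(ks)| \le e^{(\Re k) s}$ via Euler's formula and integrating gives $|s(y,k)| \le (e^{(\Re k)y}-1)/\Re k$ when $\Re k > 0$ and $|s(y,k)| \le y$ when $\Re k = 0$. The elementary inequality $1 - e^{-a} \le a$ for $a = (\Re k) y \ge 0$ then upgrades the first estimate to $|s(y,k)| \le y e^{(\Re k) y}$, which also covers the case $\Re k = 0$. Finally, $y \le e^y$ absorbs the polynomial factor and produces $|s(y,k)| \le e^{(1+\Re k) y}$.

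With this bound in hand I would substitute into the $n$-th term of \eqref{fundamentalseries1}. That term contains $n+1$ factors of $s$, with arguments $x - t_1,\ t_1 - t_2,\ \dots,\ t_{n-1}-t_n,\ t_n$, all nonnegative on $\Delta_n(x)$. The corresponding exponentials telescope, since $(x-t_1)+(t_1-t_2)+\dots+(t_{n-1}-t_n)+t_n = x$, and yield a single overall factor $e^{(1+\Re k) x}$. The remaining integrand $\prod_{j=1}^n |V(t_j)|$ on $\Delta_n(x)$ integrates to $\tfrac{1}{n!}\bigl(\int_0^x |V(t)|\,\dd t\bigr)^n$ by the standard symmetrization over the $n!$ orderings of $[0,x]^n$. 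Summing in $n$ then produces the exponential factor $\exp\bigl(\int_0^x |V(t)|\,\dd t\bigr)$ and completes \eqref{25nov5}.

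The only genuine subtlety is identifying the right sharper estimate on $s(y,k)$; everything else is a direct imitation of the derivation of \eqref{simplexestimate}. The bound one obtains is then automatically uniform in $k$ throughout $\{\Re k \ge 0\}$, including at $k=0$, which is precisely what is needed when viewing $z$ globally on $\bbC$.
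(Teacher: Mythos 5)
Your proof is correct and takes essentially the same route as the paper: sharpen the bound on $s(y,k)$ to $\lvert s(y,k)\rvert \le e^{(1+\Re k)y}$, feed it into the Volterra series so the exponentials telescope to $e^{(1+\Re k)x}$, and symmetrize the $V$ integral to get $\frac{1}{n!}\bigl(\int_0^x \lvert V\rvert\bigr)^n$. The paper obtains the intermediate bound a bit more directly---$\lvert s(y,k)\rvert = \bigl\lvert \int_0^y c(t,k)\,\dd t \bigr\rvert \le y\,e^{\Re k\, y} \le e^{(1+\Re k)y}$, bounding the integrand uniformly rather than computing the integral exactly and invoking $1 - e^{-a} \le a$---but this is only a cosmetic difference in how one reaches the same estimate.
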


\begin{proof} 
Using $\lvert s(x,k) \rvert = \lvert \int_0^x c(t,k) \dd t \rvert \le x e^{\Re k x} \le e^{(1+\Re k)x}$  and then applying \eqref{simplexestimate} to each term of \eqref{fundamentalseries1} implies that
\[
\lvert u(x,-k^2) \rvert  \le  e^{(1+\Re k) x} \sum_{n=0}^\infty \frac 1{ n!} \left( \int_0^x \lvert V(t) \rvert \right)^n. \qedhere
\]
\end{proof}

\begin{corollary}\label{cor:uniformUpperBound}
If $V$ obeys \eqref{L1locunif}, for each $R>0$ there exists $C_R$ such that for all $\lvert z\rvert\le R$ and $x \ge 1$ we have $\frac 1x \log \lvert u(x,z) \rvert \le C_R$.
\end{corollary}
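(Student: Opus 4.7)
The plan is to derive the corollary directly from the preceding lemma's bound \eqref{25nov5} by taking logarithms, dividing by $x$, and then controlling each of the two resulting terms uniformly in $z$ (over the disk $|z|\le R$) and in $x$ (for $x\ge 1$).

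First I would take logs in \eqref{25nov5} to obtain
\[
\frac{1}{x}\log\lvert u(x,z)\rvert \;\le\; 1+\Re k \;+\; \frac{1}{x}\int_0^x \lvert V(t)\rvert\,\dd t,
\]
where $k=\sqrt{-z}$ with $\Re k\ge 0$. For the first term, since $\lvert k\rvert=\lvert z\rvert^{1/2}\le R^{1/2}$ whenever $\lvert z\rvert\le R$, one has $\Re k\le R^{1/2}$, so this contribution is bounded by $1+R^{1/2}$ uniformly in $z$.

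Next I would handle the Cesàro-type term using the uniform local integrability \eqref{L1locunif}. Setting $M=\sup_{y\ge 0}\int_y^{y+1}\lvert V(t)\rvert\,\dd t<\infty$ and splitting $[0,x]$ into $\lceil x\rceil$ unit intervals gives
\[
\int_0^x \lvert V(t)\rvert\,\dd t \;\le\; \lceil x\rceil\, M \;\le\; (x+1)M \;\le\; 2xM \qquad \text{for } x\ge 1,
\]
hence $\frac{1}{x}\int_0^x\lvert V(t)\rvert\,\dd t\le 2M$ uniformly for $x\ge 1$. Taking $C_R := 1+R^{1/2}+2M$ then yields the desired bound, so there is no real obstacle here — the estimate is essentially a bookkeeping combination of the exponential a priori bound \eqref{25nov5} with the defining inequality \eqref{L1locunif}. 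The only mild subtlety worth noting is the restriction $x\ge 1$, which is needed to absorb the $+1$ arising from $\lceil x\rceil\le x+1$ into a multiple of $x$; for $x<1$ the quantity $\frac{1}{x}\log\lvert u(x,z)\rvert$ need not be bounded because $u(0,z)=0$, so the lower cutoff cannot be removed.
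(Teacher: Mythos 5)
Your proof is correct and follows essentially the same route as the paper: take logarithms of \eqref{25nov5}, note that $\Re k \le \sqrt{R}$ on $\lvert z\rvert\le R$, and use \eqref{L1locunif} to bound $\frac 1x\int_0^x\lvert V\rvert\,\dd t$ by $2M$ for $x\ge 1$. Nothing to change.
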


\begin{proof}
This is an immediate consequence of the previous lemma together with $\int_0^x \lvert V(t) \rvert \dd t \le C (x+1) \le 2 Cx$ for $x \ge 1$, where $C = \sup_{x \ge 0} \int_{x}^{x+1} \lvert V(t) \rvert \dd t$.
\end{proof}

We will need asymptotic statements about $m$-functions. Such statements are ubiquitous, especially for smooth potentials; we need an asymptotic expansion which doesn't assume any smoothness.

\begin{lemma}\label{lemmaexpansionm}
For fixed $x > 0$, as $z \to \infty$, $\arg z \in [\delta,2\pi - \delta]$, 
\[
- \frac{v(x,z)}{u(x,z)} = - k - \int_0^x V(t) e^{-2kt}\,dt + \frac 1k \int_0^x \int_0^{t_1} e^{-2kt_1} (1 -e^{-2kt_2}) V(t_1) V(t_2)\,dt_2\,dt_1 + O(\lvert k\rvert^{-2})
\]
uniformly in $V$ in bounded subsets of $L^1([0,x])$.
\end{lemma}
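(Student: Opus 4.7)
The plan is to combine the Volterra series representations \eqref{fundamentalseries1}, \eqref{fundamentalseries2} with a careful division. The sector condition $\arg z \in [\delta, 2\pi - \delta]$ translates to $\lvert \arg k\rvert \le (\pi - \delta)/2$, hence $\Re k \ge \sin(\delta/2)\cdot \lvert k\rvert$; in particular, any expression $e^{-2k\alpha}$ with $\alpha > 0$ bounded away from $0$ is $O(\lvert k\rvert^{-N})$ for every $N$, and I will write $O(\lvert k\rvert^{-\infty})$ for any such term throughout.

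First I would normalize. Set $\tilde u = (2k) e^{-kx} u$ and $\tilde v = 2 e^{-kx} v$, so $-v/u = -k\,\tilde v/\tilde u$. Using $s(y,k) = e^{ky}(1 - e^{-2ky})/(2k)$ and $c(y,k) = e^{ky}(1 + e^{-2ky})/2$, the exponential factors inside each integrand of \eqref{fundamentalseries1}, \eqref{fundamentalseries2} telescope and cancel $e^{-kx}$. The $n$-th order terms $\tilde u_n, \tilde v_n$ satisfy $\lvert \tilde u_n\rvert, \lvert \tilde v_n\rvert \le 2\lVert V\rVert_{L^1}^n/(n!\lvert k\rvert^n)$ via \eqref{eqn:Euler} and \eqref{simplexestimate}. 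For $\lvert k\rvert$ large enough, depending only on a bound for $\lVert V\rVert_{L^1}$, $\tilde u$ is invertible by geometric series and one has
\begin{equation*}
\frac{\tilde v}{\tilde u} = 1 + (\tilde v_1 - \tilde u_1) + (\tilde v_2 - \tilde u_2 - \tilde u_1 \tilde v_1 + \tilde u_1^2) + O(\lvert k\rvert^{-3}),
\end{equation*}
with error constants depending only on the $L^1$ bound.

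The first-order term is a direct computation: expanding the factorizations of $s(x-t,k)s(t,k)$ and $s(x-t,k)c(t,k)$ gives $\tilde v_1 - \tilde u_1 = \tfrac{1}{k}\int_0^x V(t)(1 - e^{-2k(x-t)})e^{-2kt}\dd t$. Since $(1-e^{-2k(x-t)})e^{-2kt} = e^{-2kt} - e^{-2kx}$ and the $e^{-2kx}$ contribution is $O(\lvert k\rvert^{-\infty})$, multiplication by $-k$ yields the linear term $-\int_0^x V(t)e^{-2kt}\dd t$ of the expansion.

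The main obstacle is the second-order term. Expansion shows that $\tilde v_2 - \tilde u_2$ is an integral over $\Delta_2(x)$ whose integrand, modulo $O(\lvert k\rvert^{-\infty})$, is $\tfrac{1}{2k^2}[e^{-2kt_2} - e^{-2k(x+t_2-t_1)} - e^{-2kt_1}] V(t_1)V(t_2)$; the ``awkward'' factor $e^{-2k(x+t_2-t_1)}$ has exponent in $[0,x]$ on $\Delta_2$ and does not decay. Separately, $\tilde u_1(\tilde v_1 - \tilde u_1)$ is initially an integral over $[0,x]^2$; splitting $\int_{[0,x]^2} = \int_{\Delta_2} + \int_{\Delta_2^{\mathrm{rev}}}$ and relabeling produces matching $e^{-2k(t_2+x-t_1)}$ factors. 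The key cancellation is that the $e^{-2k(x+t_2-t_1)}$ contributions from $-(\tilde v_2 - \tilde u_2)$ and from $\tilde u_1\tilde v_1 - \tilde u_1^2$ exactly cancel, and the $\pm e^{-2kt_2}$ terms drop out, leaving
\begin{equation*}
-k(\tilde v_2 - \tilde u_2 - \tilde u_1 \tilde v_1 + \tilde u_1^2) = \frac{1}{k}\int_0^x\int_0^{t_1} e^{-2k t_1}(1 - e^{-2k t_2})\,V(t_1)V(t_2)\dd t_2\,\dd t_1 + O(\lvert k\rvert^{-\infty}).
\end{equation*}
Combining the leading $-k$, the first-order contribution, and noting the $O(\lvert k\rvert^{-3})$ Taylor error becomes $O(\lvert k\rvert^{-2})$ after multiplication by $-k$, yields the claimed expansion, uniformly in $V$ on bounded subsets of $L^1([0,x])$.
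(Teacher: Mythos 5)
Your proposal is correct and takes essentially the same route as the paper. The paper's $A_n$, $B_n$ are exactly your $k^n\tilde u_n$, $k^n\tilde v_n$; the paper's \eqref{29nov3} is your geometric-series division formula; the change of variables $t_1 = \max\{s,t\}$, $t_2 = \min\{s,t\}$ in the paper is your splitting of $[0,x]^2$ into $\Delta_2$ and $\Delta_2^{\mathrm{rev}}$; and both proofs hinge on the same cancellation of the non-decaying $e^{-2k(x-t_1+t_2)}$ term between the $\tilde v_2-\tilde u_2$ block and the $\tilde u_1(\tilde v_1-\tilde u_1)$ block, leaving $e^{-2kt_1}(1-e^{-2kt_2})$.
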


\begin{proof}
Assume that $\int_0^x \lvert V(t) \rvert \dd t \le C$. Denote
\[
A_n = 2 k^{n+1} e^{-kx} \int_{\Delta_n(x)} s(x-t_1,k) \left( \prod_{j=1}^{n-1} V(t_j) s(t_{j}-t_{j+1},k) \right) V(t_n)  s(t_n,k) \, d^n t,
\]
\[
B_n = 2 k^n e^{-kx} \int_{\Delta_n(x)} s(x-t_1,k) \left( \prod_{j=1}^{n-1} V(t_j) s(t_{j}-t_{j+1},k) \right) V(t_n)  c(t_n,k) \, d^n t,
\]
From \eqref{eqn:Euler} and \eqref{simplexestimate} it follows that $\lvert A_n \rvert, \lvert B_n \rvert \le \frac{2 C^n}{n!}$. In the nontangential limit $z \to \infty$, $\arg z \in [\delta,2\pi- \delta]$, we have the elementary estimates
\[
\frac{ s(x,k) }{\frac{ e^{kx} }{2k} } = 1 - e^{-2kx} = 1 + O(\lvert k\rvert^{-3}), \quad \frac{ c(x,k) }{\frac{ e^{kx} }{2} } = 1 + e^{-2kx} = 1+ O(\lvert k\rvert^{-3}), 
\]
so the series expansions for $u(x,z)$, $v(x,z)$ imply
\begin{align*}
u(x,z) & = \frac{e^{kx}}{2k} \left( 1 + \frac{A_1}k + \frac{A_2}{k^2} +O(\lvert k\rvert^{-3}) \right), \\
v(x,z) & = \frac{e^{kx}}{2} \left( 1 + \frac{B_1}k + \frac{B_2}{k^2} + O(\lvert k\rvert^{-3}) \right),
\end{align*}
with the error $O(\lvert k\rvert^{-3})$ depending only on $C$ and $\delta$. Dividing,
\begin{equation}\label{29nov3}
- \frac{v(x,z)}{u(x,z)} 
= -k \left( 1 + \frac{B_1 - A_1}k + \frac{B_2 - A_2 - A_1(B_1 - A_1)}{k^2} + O(\lvert k\rvert^{-3}) \right).
\end{equation}
Moreover,
\begin{equation}\label{29nov1}
B_1 - A_1 = \int_0^x (1-e^{-2k(x-t)})V(t) e^{-2kt}\,dt = \int_0^x V(t) e^{-2kt}\,dt + O(e^{-2\Re k x}) 
\end{equation}
Multiplying by $A_1 = \frac 12 \int_0^x (1-e^{-2k(x-s)}) V(s) (1-e^{-2ks}) \,ds$ gives a formula for $A_1(B_1-A_1)$ as a double integral over $[0,x]^2$, and using the substitution $t_1 = \max\{s,t\}$, $t_2 = \min\{s,t\}$ gives
\begin{align*}
A_1 (B_1 - A_1) 
 & = \frac 12 \int_0^x \int_0^{t_1} (e^{-2kt_1} + e^{-2kt_2}  - 2 e^{-2k(t_1+t_2)} - e^{-2k(x-t_1+t_2)}  ) V(t_1) V(t_2) \,dt_2 \,dt_1 + O( e^{-2\Re k x})
\end{align*}
(some terms are grouped into the error $ O( e^{-2\Re k x})$ since, e.g., $x-t_2+t_1 \ge x$). Similarly,
\begin{align*}
B_2 - A_2 & = \frac 12 \int_0^x \int_0^{t_1} (1 - e^{-2k(x-t_1)}) V(t_1) (1 - e^{-2k(t_1 -t_2)}) V(t_2) e^{-2kt_2}\,dt_2 \,dt_1 \\
& = \frac 12 \int_0^x \int_0^{t_1} (e^{-2kt_2} - e^{-2kt_1} - e^{-2k(x-t_1+t_2)} ) V(t_1) V(t_2) \,dt_2 \,dt_1 + O(e^{-2\Re k x})
\end{align*}
Substituting these formulas into \eqref{29nov3} concludes the proof.
\end{proof}

Returning to the half-line setting from the introduction, we recall that half-line potentials obeying the boundedness assumption \eqref{L1locunif} are in the limit point case at $+\infty$, i.e., for every $z\in \C \setminus \E$, the set of solutions of
\[
-\partial_x^2 \psi + V \psi = z \psi, \qquad \psi \in L^2((0,\infty))
\]
is one-dimensional. Any such nontrivial solution is called the Weyl solution; it is uniquely determined up to normalization and we will not fix any particular normalization. We will use
\begin{align}\label{eq:mWeylfunction}
m(x,z) = \frac{(\partial_x \psi)(x,z)}{\psi(x,z)}.
\end{align}

\begin{proposition}\label{propexpansionm}
As $z \to \infty$, $\arg z \in [\delta,\pi - \delta]$,
\[
m(s,z) = - k - \int_0^1 V(s+t) e^{-2kt}\,dt + \frac 1k \int_0^1 \int_0^{t_1} e^{-2kt_1} (1 -e^{-2kt_2}) V(s+t_1) V(s+t_2)\,dt_2\,dt_1 + O(\lvert k\rvert^{-2})
\]
and the error is uniform in $s \in [0,\infty)$ if $V$ obeys \eqref{L1locunif}.
\end{proposition}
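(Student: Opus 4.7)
The plan is to reduce Proposition~\ref{propexpansionm} to Lemma~\ref{lemmaexpansionm} applied to the shifted potential $V_s(t):=V(s+t)$ on $[0,1]$, and then bound the discrepancy between $m(s,z)$ and $-\tilde v(1,z)/\tilde u(1,z)$ (where $\tilde u(\cdot,z), \tilde v(\cdot,z)$ are the fundamental solutions associated to $V_s$) by a quantity that is super-polynomially small in $\lvert k\rvert$, uniformly in $s$.

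Since \eqref{L1locunif} gives $\lVert V_s\rVert_{L^1([0,1])}\le\sup_{x\ge 0}\int_x^{x+1}\lvert V(t)\rvert\,dt$ uniformly in $s$, the shifted potentials form a bounded subset of $L^1([0,1])$. Applying Lemma~\ref{lemmaexpansionm} with $x=1$ to each $V_s$ therefore yields the claimed three-term expansion for $-\tilde v(1,z)/\tilde u(1,z)$ with $O(\lvert k\rvert^{-2})$ error uniform in $s$. It remains to estimate the difference $m(s,z)+\tilde v(1,z)/\tilde u(1,z)$.

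For this we invoke Weyl disc theory for the shifted problem. Because $V$ is in the limit point case at $+\infty$, the half-line $m$-function $m(s,z)$ is the unique point in the intersection of all the Weyl discs; in particular, it lies in the disc corresponding to truncation at $x=1$. The Dirichlet value $-\tilde v(1,z)/\tilde u(1,z)$ also lies on that disc, and its radius is
\[
\frac{1}{2\lvert\Im z\rvert\int_0^1 \lvert\tilde u(t,z)\rvert^2\,dt}.
\]
In the sector $\arg z\in[\delta,\pi-\delta]$ one has $\lvert\Im z\rvert\ge\sin(\delta)\lvert k\rvert^2$ and $\Re k\ge\sin(\delta/2)\lvert k\rvert$. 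The series \eqref{fundamentalseries1} combined with \eqref{simplexestimate}, \eqref{eqn:Euler} and the uniform $L^1$-bound on $V_s$ gives $\tilde u(t,z)=s(t,k)(1+O(\lvert k\rvert^{-1}))$ uniformly in $s$ and $t\in[0,1]$, while the elementary estimate $\lvert s(t,k)\rvert\ge c\,e^{\Re k\,t}/\lvert k\rvert$ holds for $t\Re k\ge 1$. Hence
\[
\int_0^1 \lvert\tilde u(t,z)\rvert^2\,dt \;\gtrsim\; \int_{1/\Re k}^{1}\frac{e^{2\Re k\,t}}{\lvert k\rvert^2}\,dt \;\gtrsim\; \frac{e^{2\Re k}}{\lvert k\rvert^2\,\Re k},
\]
so the disc radius is $O(\Re k\cdot e^{-2\Re k})=O(\lvert k\rvert^{-N})$ for every $N>0$, uniformly in $s$. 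Combining this with the first step completes the proof.

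The only real subtlety is the uniformity in $s$. This is automatic throughout: Lemma~\ref{lemmaexpansionm} is stated uniformly over bounded subsets of $L^1([0,x])$, and the Weyl-disc estimate depends on $V_s$ only through the uniform $L^1$-bound coming from \eqref{L1locunif}. One small technical point requiring care is the lower bound on $\int_0^1 \lvert\tilde u(t,z)\rvert^2\,dt$, since $\tilde u$ vanishes at $t=0$; this is handled by integrating only over $t\in[1/\Re k,1]$ where the exponential behavior of $s(t,k)$ dominates.
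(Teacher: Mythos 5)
Your proposal is correct and follows essentially the same strategy as the paper: reduce to Lemma~\ref{lemmaexpansionm} for the shifted potential, then control the gap between $m(s,z)$ and the Dirichlet quotient $-\tilde v(1,z)/\tilde u(1,z)$ via the Weyl disc at $x=1$, whose radius decays (super-)exponentially in $\Re k$ and whose dependence on $V$ is only through the uniform $L^1$ bound. The one cosmetic difference is that the paper cites Atkinson's asymptotic formula $r = \frac{2|k|^2}{|\Im k|}e^{-2x\Re k}(1+O(|k|^{-1}))$ for the radius, whereas you derive the needed bound from the elementary identity $r = \bigl(2|\Im z|\int_0^1|\tilde u|^2\bigr)^{-1}$ together with a lower bound on $\int|\tilde u|^2$; both yield the same $O(\Re k\, e^{-2\Re k})$ estimate. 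One small imprecision: your claim that $\tilde u(t,z)=s(t,k)(1+O(|k|^{-1}))$ uniformly for all $t\in[0,1]$ is not literally true near $t=0$, where $s(t,k)$ is small and the relative error can be large; but since you only use the comparison on $[1/\Re k,1]$, where $|s(t,k)|\gtrsim e^{\Re k t}/|k|$, the argument goes through as you indicate.
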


\begin{proof}
By an argument of Atkinson \cite{Atkinson81}, for $\arg z \in [\delta,\pi-\delta]$, the Weyl circle at $x$ has radius 
\[
r = \frac{2\lvert k \rvert^2}{\lvert \Im k \rvert} e^{-2x\Re k} (1+ O(\lvert k\rvert^{-1}))
\]
which decays exponentially as $z \to \infty$, $\arg z\in [\delta,\pi - \delta]$; the error term $O(\lvert k\rvert^{-1})$ is uniform for $V$ in bounded subsets of $[0,x]$, since this term is derived by arguments like those in the proof of Lemma~\ref{lemmaexpansionm}. Since $m_+(0,z)$ lies inside the Weyl circle and $-v(1,z) / u(1,z)$ lies on the circle, this radius allows us to estimate 
\[
\left\lvert m(0,z) + \frac{v(1,z)}{u(1,z)} \right\rvert \le  \frac{4\lvert k \rvert^2}{\lvert \Im k \rvert} e^{-2 \Re k} (1+ O(\lvert k\rvert^{-1})).
\]
In the nontangential limit as $\arg z \in [\delta,\pi-\delta]$, this error is $O(\lvert k\rvert^{-2})$, so the previous lemma implies
\[
m(0,z) = - k - \int_0^1 V(t) e^{-2kt}\,dt + \frac 1k \int_0^1 \int_0^{t_1} e^{-2kt_1} (1 -e^{-2kt_2}) V(t_1) V(t_2)\,dt_2\,dt_1 + O(\lvert k\rvert^{-2}).
\]
Applying this for an arbitrary $s \ge 0$ to the translated half-line potential $V_s(x) = V(x+s)$ on $[0,\infty)$ concludes the proof.
\end{proof}

For the half-line operator $L_V$, the Dirichlet solution can be interpreted as the Weyl solution corresponding to the endpoint $0$. Therefore, the Atkinson argument can be applied also ``in reverse'', to produce uniform asymptotics on the logarithmic derivative of $u(x,z)$. To produce uniform asymptotics, we fix the interval length $1$, as in the previous proof:
 
\begin{corollary}\label{corexpansionmminus}
As $z \to \infty$, $\arg z \in [\delta,\pi - \delta]$, for all $s \ge 1$,
\[
- \frac{(\partial_x u)(s,z)}{u(s,z)} = - k - \int_0^1 V(s-t) e^{-2kt}\,dt + \frac 1k \int_0^1 \int_0^{t_1} e^{-2kt_1} (1 -e^{-2kt_2}) V(s-t_1) V(s-t_2)\,dt_2\,dt_1 + O(\lvert k\rvert^{-2})
\]
and the error is uniform in $s \in [1,\infty)$ if $V$ obeys \eqref{L1locunif}.
\end{corollary}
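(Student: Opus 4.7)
The plan is to reduce the corollary to Lemma~\ref{lemmaexpansionm} applied to the reflected potential on the short interval $[s-1,s]$, and to bridge the reflected quantity to the one of interest using the Atkinson-type argument from the proof of Proposition~\ref{propexpansionm}.

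Fix $s\ge 1$ and introduce the reflected potential $\hat V(t):=V(s-t)$ for $t\in[0,1]$, together with the fundamental solutions $\hat u(t,z), \hat v(t,z)$ defined by \eqref{defnu}, \eqref{defnv} with $V$ replaced by $\hat V$. The change of variable $t\leftrightarrow s-t$ intertwines the $V$-equation on $[s-1,s]$ with the $\hat V$-equation on $[0,1]$. Applied to the Dirichlet-at-$(s-1)$ solution $\phi$ for $V$ (normalized by $\phi(s-1)=0$, $\phi'(s-1)=1$), this intertwining yields the Dirichlet-at-$1$ solution for $\hat V$; the computation from the proof of Lemma~\ref{lemmaexpansionm} then gives $\phi'(s,z)/\phi(s,z)=\hat v(1,z)/\hat u(1,z)$, identifying this value as the Dirichlet-at-$(s-1)$ point on the Weyl circle at $s$ for $[s-1,s]$. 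Lemma~\ref{lemmaexpansionm} applied to $\hat V$ at $x=1$, combined with the substitution $\hat V(t)=V(s-t)$, produces precisely the expansion stated in the corollary provided we establish
\[
-\frac{(\partial_x u)(s,z)}{u(s,z)} = -\frac{\hat v(1,z)}{\hat u(1,z)} + O(\lvert k\rvert^{-2}),
\]
uniformly in $s\ge 1$.

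For this bridging step, I would introduce $w(t,z):=u(s-t,z)$, a solution of the $\hat V$-equation on $[0,1]$ with $w'(0)/w(0)=-(\partial_x u)(s,z)/u(s,z)$ and $w'(1)/w(1)=-(\partial_x u)(s-1,z)/u(s-1,z)$. A Lagrange-identity computation for $u$ on $[0,s-1]$, using $u(0,z)=0$ and $\Im z>0$, yields $\Im\bigl((\partial_x u)(s-1,z)/u(s-1,z)\bigr)<0$, so $w'(1)/w(1)\in\overline{\C_+}$. A second Lagrange-identity computation for the $\hat V$-problem on $[0,1]$ shows that the backward-propagation Möbius map sends $\overline{\C_+}$ (at $t=1$) into the closed Weyl disk at $t=0$, whose boundary is the Weyl circle parametrized by real boundary conditions at $t=1$. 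Hence $w'(0)/w(0)$ lies in this closed disk, while $-\hat v(1,z)/\hat u(1,z)$, corresponding to $w'(1)/w(1)=\infty$, lies on its boundary. The Atkinson estimate from the proof of Proposition~\ref{propexpansionm}, applied to $\hat V$ on $[0,1]$, bounds the radius of this circle by $\frac{2\lvert k\rvert^2}{\lvert\Im k\rvert}e^{-2\Re k}(1+O(\lvert k\rvert^{-1}))$ uniformly for $\hat V$ bounded in $L^1([0,1])$; by \eqref{L1locunif} the bound is uniform in $s\ge 1$. In the sector $\arg z\in[\delta,\pi-\delta]$, $\lvert k\rvert^2/\lvert\Im k\rvert$ grows only polynomially while $e^{-2\Re k}$ decays super-polynomially, so the diameter is $O(\lvert k\rvert^{-N})$ for every $N$. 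Since two points in a closed disk differ by at most the diameter, the desired estimate follows.

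The main obstacle is verifying the two Lagrange-identity / Möbius-orientation statements that force $w'(0)/w(0)$ to lie in the closed Weyl disk: this is where the hypothesis $\arg z\in[\delta,\pi-\delta]$, i.e., $z\in\C_+$, enters the argument. Once this containment is secured, the remainder is a routine combination of the Atkinson radius estimate and Lemma~\ref{lemmaexpansionm}.
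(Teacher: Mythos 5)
Your proof is correct and follows exactly the route the paper indicates before the corollary: interpret $u$ as the Weyl (Dirichlet-at-$s$) solution for the reflected potential $\hat V(t)=V(s-t)$, apply the Atkinson radius estimate to the Weyl disk at $t=1$ of the $\hat V$-problem (uniform in $s\ge 1$ because $\|\hat V\|_{L^1([0,1])}=\int_{s-1}^s|V|$ is bounded by \eqref{L1locunif}), and then invoke Lemma~\ref{lemmaexpansionm} for $\hat V$ at $x=1$. One small remark: the paragraph identifying $-\hat v(1,z)/\hat u(1,z)$ with a Dirichlet-at-$(s-1)$ logarithmic derivative is purely expository and not needed; the only facts used are that $-\hat v(1,z)/\hat u(1,z)$ lies on the boundary of the Weyl disk $D_1$ (being the value with $\psi(1)=0$) while $-(\partial_x u)(s,z)/u(s,z)$ lies inside $D_1$, and your two Lagrange-identity computations (equivalently, the standard nesting of Weyl disks, since $w(s)=u(0)=0$ places the value on the circle $C_s\subset D_1$) establish the latter.
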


To make some uniform statements for a family of Herglotz functions, we will use the Carath\'eodory inequality for the half-plane \cite[Proof of Theorem~I.8]{Lev80}: for any Herglotz function $f$,
\begin{equation}\label{eqn:Caratheodory}
\lvert f(z) \rvert \le \lvert f(i) \rvert + \Im f(i) \frac{2 \lvert z - i \rvert}{\lvert z+i\rvert - \lvert z - i\rvert}, \qquad \forall z\in \bbC_+.
\end{equation}

\begin{lemma}\label{lem:boundedWeyl}
Fix a potential $V$ which obeys \eqref{L1locunif}. For each $z \in \C_+$,
\begin{equation}\label{eqn:29nov9}
\sup_{x\ge 1} \left\lvert  \frac{(\partial_x u)(x,z)}{u(x,z)} \right\rvert <\infty.
\end{equation}
\end{lemma}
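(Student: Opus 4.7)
The strategy is to view $g(x, z) := -(\partial_x u)(x, z)/u(x, z)$ as a Herglotz function of $z \in \bbC_+$ for each fixed $x \ge 1$, to identify a reference point at which Corollary~\ref{corexpansionmminus} yields a bound on $g(x, \cdot)$ uniform in $x$, and then to use the Carath\'eodory half-plane inequality \eqref{eqn:Caratheodory} to transfer this bound to an arbitrary $z \in \bbC_+$.

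First I would verify the Herglotz property. The Wronskian $W(x) := u(x, z)\overline{(\partial_x u)(x, z)} - (\partial_x u)(x, z)\overline{u(x, z)}$ vanishes at $x = 0$ since $u(0, z) = 0$, and differentiating and using $-\partial_x^2 u + V u = z u$ gives $W'(x) = 2i(\Im z)\lvert u(x, z)\rvert^2$. Hence
\[
W(x) = 2i(\Im z)\int_0^x \lvert u(t, z)\rvert^2\,dt,
\]
which is nonzero for $z \in \bbC_+$ and $x > 0$. In particular $u(x, z) \ne 0$, so $g(x, \cdot)$ is analytic on $\bbC_+$ (with analyticity in $z$ provided by the series representations \eqref{fundamentalseries1} and \eqref{fundamentalseries3}), and
\[
\Im g(x, z) = (\Im z)\frac{\int_0^x \lvert u(t, z)\rvert^2\,dt}{\lvert u(x, z)\rvert^2} > 0.
\]

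Second, I would take $z_0 = iR$ with $R$ large enough that Corollary~\ref{corexpansionmminus} applies with a small error term. Then $k_0 = \sqrt{-z_0}$ satisfies $\lvert k_0\rvert = \sqrt{R}$ and $\Re k_0 > 0$, so the correction $\int_0^1 V(x - t) e^{-2k_0 t}\,dt$ is bounded uniformly in $x \ge 1$ by $\sup_y \int_y^{y+1}\lvert V(t)\rvert\,dt$, finite by \eqref{L1locunif}. The uniform-in-$s$ asymptotic of Corollary~\ref{corexpansionmminus} then produces a constant $C_R$ with
\[
\lvert g(x, z_0)\rvert + \Im g(x, z_0) \le C_R, \qquad \forall x \ge 1.
\]

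Third, for any fixed $z \in \bbC_+$, I would apply \eqref{eqn:Caratheodory} after a M\"obius change of variable in $\bbC_+$ sending $z_0$ to $i$; since such a change of variable takes the Herglotz function $g(x,\cdot)$ to a Herglotz function whose value at $i$ is $g(x,z_0)$, this yields
\[
\lvert g(x, z)\rvert \le \lvert g(x, z_0)\rvert + \Im g(x, z_0) \cdot C(z, z_0),
\]
where $C(z, z_0)$ depends only on $z$ and $z_0$, not on $x$. Combining with the bound of step two gives \eqref{eqn:29nov9}. The delicate point is the second step: one must extract a bound truly uniform in $x \ge 1$ from an asymptotic statement as $z \to \infty$, and this is exactly the content of the uniformity in $s \in [1,\infty)$ built into Corollary~\ref{corexpansionmminus}.
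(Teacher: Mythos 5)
Your proposal is correct and follows essentially the same route as the paper's proof: identify $-(\partial_x u)/u$ as Herglotz, get a uniform-in-$x$ bound at one large imaginary point $z_0=iR$ from Corollary~\ref{corexpansionmminus}, then transfer to arbitrary $z\in\C_+$ via the Carath\'eodory inequality \eqref{eqn:Caratheodory} after a rescaling that sends $z_0$ to $i$. The only difference is cosmetic: you fill in the Wronskian computation showing $\Im g(x,z)>0$, which the paper simply asserts as standard.
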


\begin{proof}
The ratio $- (\partial_x u)(x,z) / u(x,z)$ is a Herglotz function and obeys the nontangential asymptotics in Corollary~\ref{corexpansionmminus}. The error is uniform in $x \ge 1$ since $V$ obeys \eqref{L1locunif}. In particular, for $z = iy_0$ with some fixed $y_0 > 0$ large enough, Corollary~\ref{corexpansionmminus} implies an upper bound independent of $x$ and therefore \eqref{eqn:29nov9}. By rescaling by $y_0$ and using \eqref{eqn:Caratheodory}, the upper bound at $iy_0$ implies uniform upper bounds for $z$ in compact subsets of $\bbC_+$.
\end{proof}

For $z\notin\sigma(L_V)$, $\psi$ decays exponentially as $x \to \infty$. The Weyl solution $\psi$ and the Dirichlet solution $u$ are related by the Wronskian
\[
W(\psi,u) = (\partial_x u)(x,z) \psi(x,z) -  (\partial_x\psi)(x,z) u(x,z)
\]
which is independent of $x$ and nonzero, since $u$, $\psi$ are linearly independent (otherwise they would give an eigenvalue of $L_V$). 
This strongly suggests that $u$ should grow at the same rate at which $\psi$ decays, but a proof based only on the Wronskian is difficult due to the derivative, especially if a pointwise statement is desired. We therefore use a different argument:

\begin{lemma}\label{lemmaDirichletToWeyl}
Fix a potential $V$ which obeys \eqref{L1locunif}. For each $z \in \C_+$, there exists $C$ such that for all $x \in [1,\infty)$,
\[
C^{-1} \le \lvert u(x,z) \psi(x,z) \rvert \le C.
\]
\end{lemma}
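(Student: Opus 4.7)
The plan is to exploit the Wronskian identity $W(\psi,u)=\psi u'-\psi' u$, which is constant in $x$ and nonzero; after rescaling $\psi$ we may assume $W(\psi,u)=1$. Dividing by $u\psi$ gives the pointwise identity
\[
\frac{1}{u(x,z)\psi(x,z)} = \frac{(\partial_x u)(x,z)}{u(x,z)} - m(x,z).
\]
The two desired bounds will follow by controlling the right-hand side in modulus (for the lower bound on $|u\psi|$) and in imaginary part (for the upper bound).

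For the lower bound on $|u\psi|$, Lemma~\ref{lem:boundedWeyl} already gives $\sup_{x\ge 1}|(\partial_x u)/u|<\infty$. A completely parallel argument applies to $m$: Proposition~\ref{propexpansionm} at a single large $z=iy_0$ gives $\sup_{s\ge 0}|m(s,iy_0)|<\infty$, and since $z\mapsto m(s,z)$ is Herglotz, the Carath\'eodory inequality \eqref{eqn:Caratheodory} extends this to uniform-in-$s$ boundedness of $|m(s,z)|$ on compact subsets of $\C_+$. Together these bound $|1/(u\psi)|$ uniformly, yielding $|u(x,z)\psi(x,z)|\ge C(z)^{-1}$.

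For the upper bound, I would take imaginary parts. Standard Lagrange-identity computations (integration by parts against $-\phi''+V\phi=z\phi$, using $u(0,z)=0$ on the left and $\psi\in L^2$ near $+\infty$ on the right) yield
\[
\Im\frac{(\partial_x u)(x,z)}{u(x,z)} = -\Im z\,\frac{\int_0^x|u(t,z)|^2\,dt}{|u(x,z)|^2},\qquad \Im m(x,z) = \Im z\,\frac{\int_x^\infty|\psi(t,z)|^2\,dt}{|\psi(x,z)|^2},
\]
so $\bigl|\Im\bigl(1/(u\psi)\bigr)\bigr|=\Im z\cdot S(x)$ with
\[
S(x)=\frac{\int_0^x|u(t,z)|^2\,dt}{|u(x,z)|^2} + \frac{\int_x^\infty|\psi(t,z)|^2\,dt}{|\psi(x,z)|^2}.
\]
It then suffices to show $S(x)\ge c(z)>0$ uniformly for $x\ge 1$. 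For this I would use a Gronwall-type estimate: since $\frac{d}{ds}\log|\psi(s,z)|^2 = 2\Re m(s,z)$ and $|m(s,z)|\le M$ for all $s\ge 0$ by the previous step, integration gives $|\psi(s,z)|^2\ge e^{-2M(s-x)}|\psi(x,z)|^2$ for $s\ge x$, and integrating over $s\in[x,x+1]$ produces $\int_x^\infty|\psi|^2\,ds/|\psi(x)|^2\ge (1-e^{-2M})/(2M)>0$. The same argument on $[x-1,x]$ using $|(\partial_x u)/u|\le M'$ bounds the $u$-term below for $x\ge 2$; the values $x\in[1,2]$ are handled by continuity of $u\psi$ together with non-vanishing ($\psi(x,z)\ne 0$ because $\Im(\bar\psi\psi')(x)=\Im z\int_x^\infty|\psi|^2>0$, and $u(x,z)\ne 0$ for $x>0$ since otherwise $z$ would be a real Dirichlet eigenvalue of $L_V$ on $[0,x_0]$).

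The main obstacle is the Gronwall lower bound for $S(x)$: everything hinges on the uniform-in-$s$ boundedness of both log-derivatives, which is itself a non-trivial consequence of the $m$-function asymptotics and the Carath\'eodory inequality. Once this is in hand, the argument is a short chain: bounded log-derivatives prevent $|\psi|^2$ (resp.\ $|u|^2$) from collapsing too fast on a unit interval, which keeps the integrals $\int_x^{x+1}|\psi|^2/|\psi(x)|^2$ and $\int_{x-1}^x|u|^2/|u(x)|^2$ uniformly away from zero, which via the imaginary-part identity bounds $|u\psi|$ above.
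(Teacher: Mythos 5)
Your argument is correct, and for the \emph{upper} bound on $\lvert u\psi\rvert$ it takes a genuinely different route from the paper. The paper's proof is more symmetric: it establishes the uniform nontangential asymptotic $g(x,x;z)=\tfrac{1}{2\sqrt{-z}}+O(\lvert z\rvert^{-1})$ (uniform in $x\ge 1$, by combining Proposition~\ref{propexpansionm} with Corollary~\ref{corexpansionmminus}), notes that both $g(x,x;\cdot)$ and $-1/g(x,x;\cdot)$ are Herglotz, and applies the Carath\'eodory inequality \eqref{eqn:Caratheodory} to both of them at once to transport the uniform bound and nondegeneracy from one large $iy_0$ to all compacts in $\C_+$. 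Your lower bound on $\lvert u\psi\rvert$ is essentially the paper's argument unbundled: you bound $\lvert 1/(u\psi)\rvert=\lvert u'/u-m\rvert$ by applying the Lemma~\ref{lem:boundedWeyl}-style Carath\'eodory bound separately to the two Herglotz functions $-(\partial_x u)/u$ and $m$, which is fine. For the upper bound, instead of applying Carath\'eodory to the Herglotz function $g(x,x;\cdot)$ itself, you compute $\Im(1/(u\psi))$ by the Lagrange identity and then use a Gronwall estimate to keep the resulting $L^2$-ratios away from zero. This works, and has the mild advantage of only needing first-order control $\lvert m\rvert\le M$ rather than the two-term expansion; the disadvantage is that it forfeits the cleaner ``one Carath\'eodory argument does both directions'' structure. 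One small redundancy: since the two summands in your $S(x)$ are individually nonnegative, bounding the $\psi$-term below (which holds for all $x\ge 0$ because $\lvert m(s,\cdot)\rvert$ is uniformly bounded for $s\ge 0$) already gives $S(x)\ge c>0$; the Gronwall argument on the $u$-side and the separate treatment of $x\in[1,2]$ are not needed. Finally, as you implicitly use, the Lagrange computation for $\psi$ requires $\bar\psi\psi'\to 0$ at $+\infty$, which holds under \eqref{L1locunif} by the same reference \cite{Lu13} used in Lemma~\ref{lem:positiveLimit}.
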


\begin{proof}
We use the diagonal (spectral theoretic) Green's function for $L_V$,
\begin{equation}\label{diagonalGreenFunction}
g(x,x;z) = \frac{u(x,z) \psi(x,z)}{W(\psi,u)},
\end{equation}
which can be written as
\begin{equation}\label{23dec1}
- \frac 1{g(x,x;z)} = \frac{(\partial_x\psi)(x,z)}{\psi(x,z)} - \frac{(\partial_x u)(x,z)}{u(x,z)}.
\end{equation}
Using the above asymptotics for $m$-functions gives a well known asymptotic statement,
\[
g(x,x;z) = \frac 1{2\sqrt{-z}} + O(\lvert z\rvert^{-1}), \qquad z\to \infty, \arg z \in [\delta,\pi- \delta],
\]
and the proof given here shows that this asymptotic behavior is uniform in $x \in [1,\infty)$, since $V$ obeys \eqref{L1locunif}. In particular, for some fixed $z=iy$ with $y$ large enough, this implies $\sup_{x\in [1,\infty)} \lvert g(x,x;iy) \rvert < \infty$ and $\inf_{x\in [1,\infty)} \lvert g(x,x;iy) \rvert  > 0$. Rescaling $z$ by a factor $y$ and applying \eqref{eqn:Caratheodory} to the Herglotz functions $g(x,x;z)$ and $-1/g(x,x;z)$ implies uniform upper and lower bounds on compact subsets of $\bbC_+$.

For any $z \in \bbC_+$, the Wronskian is nonzero and independent of $x$, so by \eqref{diagonalGreenFunction}, uniform bounds in $x$ for $g(x,x;z)$ imply uniform bounds in $x$ (for each $z\in \C_+$) for $u(x,z) \psi(x,z)$. 
\end{proof}

The growth rate of $u(x,z)$ can now be expressed in terms of averages of the $m$-functions:

\begin{corollary}\label{corollaryDirichletToWeyl} 
For any $z \in \C_+$,
\begin{equation}
\limsup_{x\to\infty} \left\lvert \frac{1}{x}\log u(x,z) + \frac{1}{x}\int_{0}^{x}m(s,z)\dd s \right\rvert=0.
\end{equation}
\end{corollary}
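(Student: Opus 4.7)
The plan is to rewrite the quantity inside the $\limsup$ in terms of a single complex logarithm and then verify that it remains bounded in $x$. With continuous branches of the complex logarithm in $x$ (to be justified in the next step), the identity $\frac{d}{dx}\log \psi(x,z) = m(x,z)$ integrates to
\[
\int_0^x m(s,z)\,ds = \log \psi(x,z) - \log \psi(0,z),
\]
so the quantity of interest equals $\frac{1}{x}\bigl(\log u(x,z) + \log \psi(x,z) - \log \psi(0,z)\bigr)$. Since two continuous logarithms of a nowhere-vanishing function differ by an additive constant in $2\pi i \Z$, this is $\frac{1}{x}\log\bigl(u(x,z)\psi(x,z)\bigr) + O(1/x)$, and the proof reduces to a uniform bound $|\log(u(x,z)\psi(x,z))| \leq C$ for $x \geq 1$.

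The first task is to justify the continuous branches by verifying non-vanishing of $u$ and $\psi$ for $z\in \C_+$. If $u(x_0,z)=0$ for some $x_0>0$, then $u|_{[0,x_0]}$ would be an eigenfunction of the Dirichlet Schr\"odinger operator on $[0,x_0]$ with non-real eigenvalue $z$, contradicting self-adjointness; similarly, a zero of $\psi$ at some $x_0\geq 0$ would make $\psi|_{[x_0,\infty)}$ a non-real Dirichlet eigenfunction on $[x_0,\infty)$. This ensures that continuous branches of $\log u(x,z)$ on $(0,\infty)$ and $\log \psi(x,z)$ on $[0,\infty)$ exist.

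For the uniform bound on $|\log(u(x,z)\psi(x,z))|$, the modulus is handled directly by Lemma \ref{lemmaDirichletToWeyl}, which provides two-sided bounds on $|u(x,z)\psi(x,z)|$ for $x\geq 1$. For the argument, I would invoke the identity $u(x,z)\psi(x,z) = W(\psi,u)\, g(x,x;z)$ from \eqref{diagonalGreenFunction} together with the Herglotz property of $g(x,x;\cdot)$, which places $g(x,x;z)\in \C_+$ for every $z\in \C_+$. Consequently $(u\psi)(x,z)$ lies in the fixed rotated half-plane $W\cdot \C_+$, independent of $x$, and the continuous branch of $\arg(u\psi)(x,z)$ therefore stays in an interval of length $\pi$. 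Combining the modulus and argument bounds gives $|\log(u\psi)(x,z)|\leq C$ uniformly in $x\geq 1$, and dividing by $x$ yields the claim, in fact with $\lim$ rather than $\limsup$ and with limiting value $0$. The only delicate point is the branch-cut bookkeeping for the complex logarithm, which is fully resolved once non-vanishing of $u$ and $\psi$ is in place.
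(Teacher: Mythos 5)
Your proof is correct and takes the same essential route as the paper's one-line proof, which invokes Lemma~\ref{lemmaDirichletToWeyl} together with the identity $m(x,z)=\partial_x\log\psi(x,z)$. You additionally supply the bound on the imaginary part of $\log(u\psi)$, via the Herglotz property of $g(x,x;\cdot)$ placing $(u\psi)(x,z)$ in the fixed rotated half-plane $W\cdot\C_+$ so that the continuous argument of $u\psi$ stays within an interval of length $\pi$ uniformly in $x$; the paper leaves this step implicit even though Lemma~\ref{lemmaDirichletToWeyl} controls only $|u\psi|$, i.e., only the real part of its logarithm, so your completion is a worthwhile addition.
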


\begin{proof}
This follows from Lemma \ref{lemmaDirichletToWeyl} since $m(x,z)$ is the logarithmic derivative of $\psi(x,z)$.
\end{proof}

 Expansions for $m(s,z)$ are often stated in terms of values of $V$ and its derivatives at $s$, but such expansions assume some regularity of $V$, and the error terms in such expansions are usually not uniform in the appropriate local norm for $V$. By working directly with the expansion in  Prop.~\ref{propexpansionm}, we can obtain uniform expansions for the averages without imposing any regularity on $V$.

\begin{corollary}\label{cor:maveragetoVaverage}
 If $V$ obeys \eqref{L1locunif},
\begin{equation}\label{30nov3}
\limsup_{x\to\infty} \left\lvert \frac 1{x} \int_0^{x} m(s,z)\dd s + k + \frac 1{2k x} \int_0^{x} V(s)\dd s \right\rvert =  O(\lvert k\rvert^{-2}),
\end{equation}
as $z = -k^2 \to \infty$, $\arg z \in [\delta,\pi-\delta]$, for any $\delta > 0$.
\end{corollary}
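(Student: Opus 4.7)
The strategy is to average the pointwise asymptotic expansion of $m(s,z)$ provided by Proposition~\ref{propexpansionm} over $s \in [0,x]$ and extract the claimed main term. Writing $m(s,z) = -k + I_1(s,k) + \tfrac 1k I_2(s,k) + O(|k|^{-2})$ with the error uniform in $s \in [0,\infty)$, one gets immediately
\[
\frac 1x \int_0^x m(s,z)\,ds = -k + \frac 1x \int_0^x I_1(s,k)\,ds + \frac 1{kx} \int_0^x I_2(s,k)\,ds + O(|k|^{-2}),
\]
so the task reduces to analyzing the $s$-averages of $I_1(s,k) = -\int_0^1 V(s+t)e^{-2kt}\,dt$ and of the double integral $I_2(s,k)$.

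For the first-order term, I will apply Fubini to write
\[
\frac 1x \int_0^x I_1(s,k)\,ds = -\int_0^1 e^{-2kt}\,\frac 1x \int_0^x V(s+t)\,ds\,dt,
\]
and then use the substitution $u = s+t$ to turn $\frac 1x \int_0^x V(s+t)\,ds$ into $\frac 1x \int_0^x V(u)\,du$ plus two boundary integrals bounded by $\frac{2C}{x}$ thanks to \eqref{L1locunif}, where $C = \sup_x \int_x^{x+1}|V|$. These boundary terms integrate against $e^{-2kt}$ to give an $O(1/x)$ contribution that vanishes in $\limsup_{x\to\infty}$. Because $\arg z\in[\delta,\pi-\delta]$ forces $\Re k \ge c(\delta)|k|$, the elementary identity $\int_0^1 e^{-2kt}\,dt = \tfrac{1}{2k}(1-e^{-2k}) = \tfrac{1}{2k} + O(e^{-2c(\delta)|k|})$ shows that this piece produces $-\tfrac{1}{2kx}\int_0^x V(u)\,du$ up to an error that is exponentially small times the bounded quantity $\tfrac 1x\int_0^x V$, hence $O(|k|^{-2})$.

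The key technical step is controlling the second-order term, and this is the only place where one might worry that $O(|k|^{-1})$ rather than $O(|k|^{-2})$ could survive. I will estimate
\[
|I_2(s,k)| \le 2\int_0^1 e^{-2\Re k\,t_1} |V(s+t_1)| \left(\int_0^{t_1} |V(s+t_2)|\,dt_2\right) dt_1 \le 2C \int_0^1 e^{-2\Re k\,t_1}|V(s+t_1)|\,dt_1,
\]
then Fubini and the uniform bound $\tfrac 1x \int_0^x |V(s+t_1)|\,ds \le 2C$ (valid for $x \ge 1$) give
\[
\frac 1x \int_0^x |I_2(s,k)|\,ds \le 4C^2 \int_0^1 e^{-2\Re k\,t_1}\,dt_1 \le \frac{2C^2}{\Re k} \le \frac{2C^2}{c(\delta)|k|} = O(|k|^{-1}),
\]
so the prefactor $1/k$ in front of $I_2$ upgrades this to $O(|k|^{-2})$ uniformly in $x$. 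Combining the three pieces and taking $\limsup_{x\to\infty}$ yields \eqref{30nov3}. The main obstacle is precisely this second-order estimate: one needs both the lower bound $\Re k \gtrsim |k|$ from the non-tangential sector condition and the locally uniform $L^1$ bound \eqref{L1locunif} in order for the oscillatory exponential decay to absorb one of the two $V$ factors into a factor of $|k|^{-1}$.
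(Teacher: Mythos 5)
Your proof is correct. The overall architecture—average the uniform expansion from Proposition~\ref{propexpansionm} over $s\in[0,x]$, then analyze the two resulting $s$-averages separately—matches the paper, and your treatment of the linear term (Fubini, the shift $u=s+t$, comparing $\int_t^{x+t}$ with $\int_0^x$ using \eqref{L1locunif} to control boundary pieces, and $\int_0^1 e^{-2kt}\,\dd t = \tfrac1{2k}+O(e^{-2\Re k})$ together with $\Re k\gtrsim|k|$ in the nontangential sector) is essentially identical to the paper's. Where you genuinely diverge is the quadratic term: the paper substitutes $q=s+t_2$, compares regions as in the linear case, introduces $u=t_1-t_2$ and the kernel $h(u)=\int_0^{1-u}e^{-2k(u+t_2)}(1-e^{-2kt_2})\dd t_2$, shows $h(u)=O(|k|^{-1})$, and separately bounds $\frac1x\int_0^1\int_0^x|V(q+u)V(q)|\,\dd q\,\dd u$ via \eqref{L1locunif}. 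You instead take absolute values immediately, use $|e^{-2kt_1}(1-e^{-2kt_2})|\le 2e^{-2\Re k t_1}$, absorb one $V$-factor into the inner $\int_0^{t_1}|V(s+t_2)|\,\dd t_2\le C$, and then average over $s$ and integrate $e^{-2\Re k t_1}$ to get the $O(|k|^{-1})$ factor. This is a cleaner, more elementary route; it sacrifices the structured form the paper extracts (which would be needed to identify a further coefficient in the expansion) but is entirely sufficient here since the quadratic term is only used to bound the error. Both arguments rely on the same two ingredients you correctly flag as essential: the sector bound $\Re k\ge c(\delta)|k|$ and the uniform local $L^1$ bound \eqref{L1locunif}.
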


\begin{proof}
Due to the uniformity of the error in the asymptotic expansion from Prop.~\ref{propexpansionm},
\begin{align*}
\frac 1{x} \int_0^{x} m(s,z)\,ds & = -k - \frac 1x \int_0^x \int_0^1 V(s+t) e^{-2kt} \dd t \dd s \\
& \qquad + \frac 1{kx} \int_0^x \int_0^1 \int_0^{t_1} e^{-2kt_1} (1-e^{-2kt_2}) V(s+t_1) V(s+t_2) \dd t_2 \dd t_1 \dd s + O(\lvert k\rvert^{-2}) 
\end{align*}
with the error term independent of $x$. For the term linear in $V$, we use $p=s+t$ to rewrite the iterated integral as $ \int_0^1 \int_t^{x+t} V(p) e^{-2kt} dp dt$. Then we wish to note that
\begin{equation}\label{30nov1}
\frac 1x \int_0^1 \int_t^{x+t} V(p) e^{-2kt} \dd p \dd t =  \frac 1x \int_0^1 \int_0^{x} V(p) e^{-2kt} \dd p \dd t + O(x^{-1}), \qquad x\to\infty,
\end{equation}
for any $k$. This is because the two iterated integrals describe similar regions in $\bbR^2$: the symmetric difference of the regions $\{(t,p) \mid 0 \le t \le 1, t \le p \le x+t\}$ and $\{(t,p) \mid 0 \le t \le 1, 0 \le p \le x\}$ is contained in $[0,1]\times ([0,1] \cup [x,x+1])$, and the double integral over that region is bounded uniformly in $x$ due to \eqref{L1locunif}. Now the integral in \eqref{30nov1} separates and simplifies using $\int_0^1 e^{-2kt} \dd t = \frac 1{2k} + O(e^{-2\Re k})$.
By analogous arguments, using $q=s+t_2$ to rewrite the quadratic term and comparing the regions
$\{ (t_1,t_2,q) \mid 0 \le t_2 \le t_1 \le 1, t_2 \le q \le x+t_2 \}$ and $\{ (t_1,t_2,q) \mid 0 \le t_2 \le t_1 \le 1, 0 \le q \le x \}$, 
\begin{align*}
&  \frac 1{kx} \int_0^x \int_0^1 \int_0^{t_1} e^{-2kt_1} (1-e^{-2kt_2}) V(s+t_1) V(s+t_2) \dd t_2 \dd t_1 \dd s \\
& = \frac 1{kx} \int_0^1 \int_0^{t_1} \int_0^x  e^{-2kt_1} (1-e^{-2kt_2}) V(q+t_1-t_2) V(q) \dd q \dd t_2 \dd t_1 + O(x^{-1}) \\
& = \frac 1{kx} \int_0^1 \int_0^x  h(u) V(q+u) V(q) \dd q \dd u + O(x^{-1}) 
\end{align*}
as $x \to \infty$, for any $k$. For the last step we introduced $u = t_1 - t_2 \in [0,1]$ and $h(u) = \int_0^{1-u} e^{-2k(u+t_2)} (1-e^{-2kt_2}) \dd t_2$. For the remaining double integral, it is elementary to estimate that $h(u) = O(\lvert k\rvert^{-1})$ uniformly in $u \in [0,1]$ and that 
\[
\frac 1x \int_0^1 \int_0^x  \lvert V(q+u) V(q) \rvert \dd q \dd u \le C^2
\]
where $C$ denotes the $\sup$ in \eqref{L1locunif}, so \eqref{30nov3} follows.
\end{proof}


\section{Regular measures for half-line Schr\"odinger operators}
The main part of this section is devoted to the study of limits of the function
\begin{align}\label{eq:defineh}
h(x,z):=\frac{1}{x}\log|u(x,z)|,
\end{align}
as $x\to\infty$. Our first goal is to show that for $z\in\C_+$ we have that $\liminf_{x\to\infty}h(x,z)\geq 0$. 

\begin{lemma}\label{lem:positiveLimit}
Fix $\z\in\C_+$. Then
\begin{align*}
\liminf_{x\to\infty}\frac{1}{x}\log|u(x,\z)|\geq 0.
\end{align*}
\end{lemma}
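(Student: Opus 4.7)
The plan is to reduce the claim to a statement about the growth of the Weyl solution $\psi(x,z) \in L^2((0,\infty))$, and then derive a contradiction with square-integrability. By Lemma~\ref{lemmaDirichletToWeyl}, there exists $C = C(z) > 0$ with $|u(x,z)\psi(x,z)| \ge C^{-1}$ for all $x \ge 1$, hence
\[
\liminf_{x\to\infty}\frac{1}{x}\log|u(x,z)| \ge -\limsup_{x\to\infty}\frac{1}{x}\log|\psi(x,z)|,
\]
and it suffices to prove $\limsup_{x\to\infty}\frac{1}{x}\log|\psi(x,z)| \le 0$.

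The lower bound $|u\psi|\ge C^{-1}$ also forces $\psi(x,z)\ne 0$ for $x \ge 1$, so the Weyl $m$-function $m(x,z) := (\partial_x\psi)(x,z)/\psi(x,z)$ is well-defined there. Rearranging the identity~\eqref{23dec1} gives
\[
m(x,z) = \frac{(\partial_x u)(x,z)}{u(x,z)} - \frac{1}{g(x,x;z)}.
\]
Lemma~\ref{lem:boundedWeyl} controls the first term uniformly in $x \ge 1$, while the proof of Lemma~\ref{lemmaDirichletToWeyl} establishes uniform upper \emph{and} lower bounds on $|g(x,x;z)|$ (via the asymptotic expansion of Proposition~\ref{propexpansionm} at a single base point combined with \eqref{eqn:Caratheodory}); together these yield $\sup_{x\ge 1}|m(x,z)| =: K(z) < \infty$ for each $z \in \bbC_+$.

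Since $\partial_x \log|\psi(x,z)| = \Re m(x,z)$, integration gives the log-Lipschitz bound $\bigl|\log|\psi(x,z)| - \log|\psi(x',z)|\bigr| \le K(z)|x-x'|$ for $x,x' \ge 1$. If one had $\limsup_{x\to\infty}\frac{1}{x}\log|\psi(x,z)| > 2\alpha > 0$, there would exist $x_n \to \infty$ with $|\psi(x_n,z)|\ge e^{\alpha x_n}$; the log-Lipschitz estimate then gives $|\psi(t,z)|^2 \ge e^{2\alpha x_n - 2K(z)}$ on $[x_n-1, x_n+1]$, so $\int_{x_n-1}^{x_n+1}|\psi(t,z)|^2\,dt \to \infty$, contradicting $\psi \in L^2((0,\infty))$. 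The main obstacle is the uniform bound on $m(x,z)$ in $x$; once that is extracted from the existing lemmas via \eqref{23dec1}, the $L^2$ contradiction is routine.
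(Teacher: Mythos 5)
Your proof is correct, and it takes a genuinely different route from the paper. The paper's proof is shorter: it invokes an external result \cite[Theorem 1.1]{Lu13} stating that under \eqref{L1locunif} the Weyl solution $\psi$ decays pointwise, i.e.\ $\psi(x,z)\to 0$; combined with $|u\psi|\ge C^{-1}$ from Lemma~\ref{lemmaDirichletToWeyl}, this forces $|u(x,z)|\to\infty$, and $\liminf\frac1x\log|u|\ge 0$ follows immediately. Your argument replaces the citation with an internal derivation: you extract a uniform-in-$x$ bound on the Weyl $m$-function from \eqref{23dec1}, Lemma~\ref{lem:boundedWeyl}, and the uniform two-sided bound on $g(x,x;z)$ already present in the proof of Lemma~\ref{lemmaDirichletToWeyl}; the resulting log-Lipschitz estimate on $\log|\psi|$, together with $\psi\in L^2$, then gives the (weaker, but sufficient) conclusion $\limsup\frac1x\log|\psi|\le 0$. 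What you buy is self-containment within the paper's own lemma stack, avoiding the pointwise-decay reference; the trade-off is a somewhat longer argument and reliance on a remark embedded in the proof of Lemma~\ref{lemmaDirichletToWeyl} (the lower bound on $|g|$) rather than on a stated conclusion of a lemma. A marginal simplification of your step: one could bound $m(x,z)$ directly from Proposition~\ref{propexpansionm} via the same Carath\'eodory argument used in Lemma~\ref{lem:boundedWeyl}, bypassing the identity \eqref{23dec1} and the discussion of $g$, but your route via $g$ is equally valid.
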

\begin{proof}
Note first of all that $u(x,\z)\neq 0$ whenever $x > 0$, because the converse would correspond to an complex eigenvalue for the self-adjoint realization of $L_V$ on $[0,x]$ with Dirichlet boundary conditions. The Weyl solution $\psi(x,z)$ is an eigensolution and is in $L^2((0,\infty))$; the condition \eqref{L1locunif} is sufficient to conclude that $\psi$ decays pointwise \cite[Theorem 1.1]{Lu13}, i.e.
\begin{align*}
\lim\limits_{x\to\infty}\psi(x,\z)=0.
\end{align*}
Combining with Lemma~\ref{lemmaDirichletToWeyl} shows that $\lvert u(x,z) \rvert \to \infty$ as $x\to\infty$, which completes the proof.
\end{proof}
Let $\E=\sigma_\ess(L_V)$ written in the form \eqref{eq:spectrumGaps}. That is $b_0=\min \E$ and $(a_j,b_j)$ denote the gaps of $\E$.
\begin{lemma}\label{lem:OneEigenvalue}
For any $\e>0$ there exists $x_0>0$ such that $u(x,z)\neq 0$ for $x>x_0$ and $z\leq b_0-\e$. Moreover, let $n_j(\e)$ denote the finite number of eigenvalues in $(a_j+\e,b_j-\e)$. Then, for any $x>0$, $u(x,z)$ has at most $n_j(\e)+1$ zeros in $(a_j+\e,b_j-\e)$.
\end{lemma}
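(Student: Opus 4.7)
My plan is to derive both assertions from Sturm oscillation theory, applied both to the half-line operator $L_V$ and to the box operator $L_V^{[0,x]}$ obtained by imposing an additional Dirichlet boundary condition at $x$. Two standard ingredients will be used: (a) for every $z\in\bbR$ not in $\sigma(L_V)$, the number of zeros of $u(\cdot,z)$ in $(0,\infty)$ equals $N_{L_V}(z)$, the number of eigenvalues of $L_V$ strictly less than $z$; and (b) for every fixed $x>0$, the zeros in $z$ of the entire function $u(x,\cdot)$ are exactly the Dirichlet eigenvalues $\mu_1(x)<\mu_2(x)<\cdots$ of $L_V^{[0,x]}$, and the restriction of $u(\cdot,\mu_n(x))$ to $[0,x]$ is the $n$-th Dirichlet eigenfunction on $[0,x]$, hence has exactly $n-1$ zeros in $(0,x)$.

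For the first assertion, since $b_0=\min\sigma_\ess(L_V)$, the eigenvalues of $L_V$ cannot accumulate in $(-\infty,b_0)$, so only finitely many, say $N_\varepsilon$, lie in $(-\infty,b_0-\varepsilon]$. For any $z\leq b_0-\varepsilon$, fact (a) gives at most $N_\varepsilon$ zeros of $u(\cdot,z)$ in $(0,\infty)$; these are finite in number and therefore bounded in $x$, so there is an $x_0$ beyond which $u(x,z)\ne 0$.

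For the second assertion, I will carry out a Sturm interlacing argument between the box eigenvalues and the eigenvalues of $L_V$ inside the gap. Enumerate the box eigenvalues lying in $(a_j+\varepsilon,b_j-\varepsilon)$ as $\mu_{k_1}(x)<\cdots<\mu_{k_m}(x)$; these are precisely the zeros of $u(x,\cdot)$ in this interval whose number I wish to bound. For a consecutive pair $\mu:=\mu_{k_i}(x)<\mu':=\mu_{k_{i+1}}(x)$, fact (b) shows that $u(\cdot,\mu)$ and $u(\cdot,\mu')$ have at least $k_i$ and $k_{i+1}$ zeros respectively in $(0,x]$ (namely $k_i-1$ and $k_{i+1}-1$ interior zeros plus the zero at $x$), and hence at least that many in $(0,\infty)$. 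Applying fact (a) yields $N_{L_V}(\mu')-N_{L_V}(\mu)\ge k_{i+1}-k_i\ge 1$, so the half-open interval $[\mu,\mu')$ contains at least one eigenvalue of $L_V$. As the intervals $[\mu_{k_i}(x),\mu_{k_{i+1}}(x))$ for $i=1,\dots,m-1$ are pairwise disjoint and all contained in $(a_j+\varepsilon,b_j-\varepsilon)$, this furnishes at least $m-1$ distinct eigenvalues of $L_V$ in that interval, whence $m-1\leq n_j(\varepsilon)$ and therefore $m\leq n_j(\varepsilon)+1$.

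The step I expect to require most care is the rigorous justification of the half-line oscillation principle (a) under only the local boundedness assumption \eqref{L1locunif}; although this is classical, it must be set up via truncation to large boxes and passage to the limit, using monotonicity of finite-box eigenvalues together with the limit point condition at $+\infty$. Minor boundary issues when $z$ happens to coincide with an eigenvalue of $L_V$ (so that fact (a) needs a one-sided version, or when some $\mu_{k_i}(x)$ coincides with one) can be resolved by a small perturbation of $\varepsilon$ or by working directly with a continuous Pr\"ufer angle, which is monotonic and allows one to read off both counts simultaneously.
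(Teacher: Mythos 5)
Your route through Dirichlet box eigenvalues and classical Sturm oscillation is genuinely different from the paper's, which works with the diagonal Green's function $g(x,x;z)=u(x,z)\psi(x,z)/W(\psi,u)$: this is a Herglotz function meromorphic on $\C\setminus\E$ whose poles are eigenvalues of $L_V$ and whose zeros include those of $u(x,\cdot)$, and strict interlacing of zeros and poles on $(a_j+\e,b_j-\e)$ gives the bound $n_j(\e)+1$ at once. However, your argument has a real gap. Your ``fact (a)'' --- the number of zeros of $u(\cdot,z)$ in $(0,\infty)$ equals $N_{L_V}(z)$ --- is valid only for $z<b_0=\inf\sigma_\ess(L_V)$. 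For $z>b_0$, and in particular for $z$ in a gap $(a_j,b_j)$, the Pr\"ufer angle $\theta(x,z)$ tends to $+\infty$ as $x\to\infty$ (equivalently, the number of box eigenvalues $\mu_n(x)$ below $z$ grows without bound), so $u(\cdot,z)$ has \emph{infinitely many} zeros in $(0,\infty)$ while your $N_{L_V}(z)$ is finite. Consequently the step ``$u(\cdot,\mu)$ has at least $k_i$ zeros, hence $N_{L_V}(\mu)\ge k_i$'' carries no content, the inequality $N_{L_V}(\mu')-N_{L_V}(\mu)\ge k_{i+1}-k_i$ is unsupported, and the interlacing argument collapses. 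The conclusion you want (an eigenvalue of $L_V$ between consecutive box eigenvalues in a gap) is correct, but reaching it requires either the Green's function interlacing used in the paper or a renormalized oscillation theorem counting sign changes of Wronskians of two eigensolutions in the spirit of Gesztesy--Simon--Teschl, not the naive zero count.

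A secondary issue concerns the first assertion: you produce an $x_0$ for each fixed $z\le b_0-\e$ and then assert a single $x_0$ with no bridging argument, and that uniformity cannot be taken for granted. Indeed, if $L_V$ has an eigenvalue $\lambda<b_0-\e$, then the box ground state $\mu_1(x)$ decreases to $\lambda$ as $x\to\infty$, so $u(x,\cdot)$ vanishes at $\mu_1(x)\le b_0-\e$ for every large $x$. What is actually available, by Dirichlet bracketing (the number of box eigenvalues of $L_V^{[0,x]}$ below any level is at most the number for $L_V$) or, in parallel with the paper's argument, by interlacing for $g(x,x;\cdot)$ on $(-\infty,b_0-\e)$ where $g\to 0^+$ at $-\infty$, is the uniform count: for every $x>0$, $u(x,\cdot)$ has at most $N_\e$ zeros in $(-\infty,b_0-\e]$. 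That is the statement you should aim to prove, and it is what is actually used in the proof of Theorem~\ref{thm:harmonicLimitExtension} via the same polynomial-division trick employed for the gaps.
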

\begin{proof}
Since $L_V$ is semibounded there are at most finitely many eigenvalues below $b_0-\e$. Hence, the first statement follows by Sturm oscillation theory. 

As in the proof of Lemma \ref{lemmaDirichletToWeyl}, we use the spectral theoretic Green's function $g(x,x;z)$. By the Weyl $M$-matrix representation for $L_V$ centered at $x$, $g(x,x;\cdot)$ is analytic on $\C\setminus\sigma(L_V)$ and, since it is Herglotz, it is strictly increasing on intervals in $\R \setminus \sigma(L_V)$. In particular, every pole of $g(x,x;\cdot)$ is an eigenvalue of $L_V$, so it has at most $n_j(\e)$ poles in $(a_j+\e,b_j-\e)$. By \eqref{23dec1}, every zero of $u(x,z)$ is a pole of $-(\partial_x u)(x,z) / u(x,z)$ and a zero of $g(x,x;\cdot)$. Since zeros and poles of the Herglotz function $g(x,x;\cdot)$ strictly interlace on intervals in the domain of meromorphicity, it follows that $u(x,z)$ has at most $n_j(\e)+1$ zeros in $(a_j+\e,b_j-\e)$.
\end{proof}

We are now ready to study the existence of limit points for the family of functions $\cF=\{h(x,z)\}_{x\in [1,\infty)}$. Since $u(x,\cdot)$ are entire functions, the functions $h(x,\cdot)$ are subharmonic in $\C$, and they can be viewed as elements of the space of distributions $\cD'(\C)$ with nonnegative distributional Laplacian.

\begin{theorem}\label{thm:harmonicLimitExtension}
\begin{enumerate}[(a)]
\item The family $\cF=\{h(x,z)\}_{x\in [1,\infty)}$ is precompact in $\cD'(\C)$.
\item For any sequence $(x_j)_{j=1}^\infty$ with $x_j \to \infty$ such that $h(x_j,\cdot)$ converges in $\cD'(\C)$, the limit  $h = \lim_{j\to\infty} h(x_j,\cdot)$ is also a subharmonic function on $\C$, harmonic on $\C \setminus \E$, and $h(x_j,\cdot)$ also converge to $h$ uniformly on compact subsets of $\C \setminus \E$.
\end{enumerate}
\end{theorem}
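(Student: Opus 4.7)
The family $\cF$ consists of subharmonic functions on $\C$, locally uniformly bounded above by Corollary~\ref{cor:uniformUpperBound}. The standard compactness theorem for such families (e.g.\ H\"ormander, \emph{The Analysis of Linear Partial Differential Operators I}, Thm.~4.1.9) asserts that any sequence from a locally bounded-above family of subharmonic functions either diverges uniformly to $-\infty$ on compacta or has a subsequence converging in $L^1_\loc(\C)$ to a subharmonic limit. Lemma~\ref{lem:positiveLimit}, evaluated at any single point of $\C_+$, rules out the divergent alternative. Hence $\cF$ is precompact in $L^1_\loc(\C)$, and \emph{a fortiori} in $\cD'(\C)$.

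\textbf{Part (b): subharmonicity and harmonicity off $\E$.} Suppose $h(x_j,\cdot)\to h$ in $\cD'(\C)$, equivalently in $L^1_\loc(\C)$; then (after upper-semicontinuous regularization) $h$ is subharmonic by standard theory of $L^1_\loc$-limits of subharmonic functions. To establish harmonicity off $\E$, pass to distributional Laplacians: $\Delta h(x_j,\cdot)=2\pi\rho_{x_j}$ converges to $\Delta h=2\pi\rho$ in $\cD'(\C)$, so the Riesz measure $\rho$ of $h$ is the distributional limit of the rescaled zero-counting measures $\rho_{x_j}$. Each $\rho_{x_j}$ is supported on $\R$ since zeros of $u(x_j,\cdot)$ are Dirichlet eigenvalues of the self-adjoint truncation of $L_V$ to $[0,x_j]$. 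For any relatively compact open $V$ with $\overline V\subset\C\setminus\E$, Lemma~\ref{lem:OneEigenvalue} provides a uniform bound $N$ on the number of zeros of $u(x_j,\cdot)$ in $\overline V$, giving $\rho_{x_j}(\overline V)\le N/x_j\to 0$. A standard test-function argument (compactly supported $\phi\ge\chi_{\overline V}$ with $\supp\phi$ still in $\C\setminus\E$) then forces $\rho(V)=0$, and exhausting $\C\setminus\E$ by such $V$ yields $\supp\rho\subset\E$; thus $h$ is harmonic on $\C\setminus\E$.

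\textbf{Uniform convergence and main obstacle.} Let $K\subset\C\setminus\E$ be compact, and pick an open $U$ with $K\subset U$ and $\overline U\subset\C\setminus\E$; let $N$ bound the number of zeros of $u(x_j,\cdot)$ in $\overline U$ uniformly in $j$. Factor $u(x_j,z)=p_j(z)q_j(z)$, where $p_j$ is the monic polynomial of degree at most $N$ whose roots (with multiplicity) are the zeros of $u(x_j,\cdot)$ in $\overline U$, and $q_j$ is entire and zero-free on $\overline U$. Then $\frac{1}{x_j}\log|q_j|$ is harmonic on $U$, while the logarithmic potential $\frac{1}{x_j}\log|p_j|$ has Riesz mass $(\deg p_j)/x_j=O(1/x_j)\to 0$ and therefore vanishes in $L^1_\loc(U)$. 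Consequently $\frac{1}{x_j}\log|q_j|\to h$ in $L^1_\loc(U)$, and since harmonic functions converging in $L^1_\loc$ automatically converge uniformly on compacta (mean-value identity plus interior estimates), this convergence is uniform on $K$. Together with the trivial upper bound $\frac{1}{x_j}\log|p_j(z)|\le N\log(\mathrm{diam}\,U)/x_j$, this yields $\limsup_j\sup_K(h(x_j,\cdot)-h)\le 0$. The \emph{main obstacle} is producing the matching uniform lower bound on $\frac{1}{x_j}\log|p_j(z)|$: the real zeros $z_\nu^{(j)}\in\overline U$ can cluster in $K$ at points of $(\C\setminus\E)\cap\R$, potentially at Dirichlet eigenvalues of $L_V$ in interior gaps, where each individual log-singularity threatens the lower bound. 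I would reduce to a subsequence along which the at most $N$ zeros in $\overline U$ have definite real cluster points and then combine their bounded cardinality with the $1/x_j$ weighting of each log-singularity to force $\frac{1}{x_j}\log|p_j(z)|\to 0$ uniformly on $K$; this combinatorial-potential-theoretic step is the delicate part of the argument.
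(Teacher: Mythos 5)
Your part (a) is correct and identical in spirit to the paper's argument (H\"ormander's compactness criterion plus the one-point lower bound from Lemma~\ref{lem:positiveLimit}).

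For part (b), the first half of your argument is sound and in fact takes a different, arguably cleaner route than the paper. You pass to the Riesz measures $\rho_{x_j}$ directly and use Lemma~\ref{lem:OneEigenvalue} to show $\rho_{x_j}(\overline V)=O(1/x_j)\to 0$ on any compact $\overline V\subset\C\setminus\E$, concluding $\supp\rho\subset\E$ and hence harmonicity off $\E$. The paper instead fixes a gap $(a_m,b_m)$, introduces $f_j=\frac{1}{x_j}\log\lvert u(x_j,\cdot)/p_j\rvert$ with $p_j$ the monic polynomial through the (at most $n_m(\e)+1$) zeros of $u(x_j,\cdot)$ in $(a_m+\e,b_m-\e)$, checks that the $f_j$ form a normal family on a rectangle straddling the gap, and uses the explicit two-sided bound $\lvert\Im z\rvert^{n_m(\e)+1}\le\lvert p_j(z)\rvert\le(b_m-a_m+1)^{n_m(\e)+1}$ off $\R$ to see $f_j-h_j\to 0$ there; the limit of the $f_j$ then furnishes the harmonic extension. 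Both approaches buy the same thing here.

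The uniform convergence step, however, is a genuine gap in your argument, as you yourself flag, and the ``combinatorial--potential-theoretic'' fix you propose cannot work as stated. If $z_0$ is a Dirichlet eigenvalue of $L_V$ lying inside a gap $(a_m,b_m)$, then $u(\cdot,z_0)$ is, up to normalization, the $L^2$ eigenfunction, which decays exponentially; hence $h(x_j,z_0)=\frac{1}{x_j}\log\lvert u(x_j,z_0)\rvert$ converges to a strictly negative number. On the other hand, once harmonicity of $h$ is known, Lemma~\ref{lem:positiveLimit} together with the minimum principle forces $h>0$ on $\Omega$, so $h(z_0)>0$. Thus even pointwise convergence $h(x_j,z_0)\to h(z_0)$ fails at $z_0$, and no rearrangement of the $1/x_j$ weights on the $\log\lvert p_j\rvert$ singularities can rescue a uniform bound on a compact $K$ containing such a $z_0$. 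The paper avoids this by proving uniform convergence only on compacts $K\subset\C\setminus[b_0,\infty)$ (plus on compacts of $\C_\pm$): by the first part of Lemma~\ref{lem:OneEigenvalue}, for $j$ large $u(x_j,\cdot)$ has \emph{no} zeros near such $K$, so each $h_j$ is already harmonic there and the normal-families argument closes immediately, with no $p_j$-factorization needed. (The theorem's phrase ``compact subsets of $\C\setminus\E$'' is a slight overclaim relative to what the proof establishes, but only the weaker statement is used later.) To repair your write-up, you should either restrict to compacts avoiding the gap eigenvalues of $L_V$, or, as the paper does, restrict to $\C\setminus[b_0,\infty)$ and rely on Lemma~\ref{lem:OneEigenvalue} to eliminate zeros near $K$ for large $j$.
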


\begin{proof}
(a) By Corollary \ref{cor:uniformUpperBound}, $h(x,z)$ is uniformly bounded from above on compact subsets of $\C$. Moreover, Lemma \ref{lem:positiveLimit} implies a pointwise lower bound at some arbitrary point $z_0\in\C_+$. Hence, \cite[Theorem 4.1.9]{Hoermander1}
shows that $\cF$ is precompact in the topology of $\cD'(\C)$. 

(b) On $\bbC_+$ and on $\bbC_-$, the functions $h(x,z)$ are harmonic and uniformly bounded above. Since they are also pointwise bounded below, they are uniformly bounded and uniformly equicontinuous on each compact subset of $\bbC_\pm$. Therefore, they are precompact in the topology of uniform convergence on compact subsets of $\bbC_\pm$. Since this convergence implies convergence in $L^1_\loc(\C_\pm)$, it follows that if the sequence $h(x_j,\cdot)$ converges in $\cD'(\C)$ to $h$, then it also converges to $h$ uniformly on compact subsets of $\C_\pm$.

Next, we show that $h$ has a harmonic extension through an arbitrary gap $(a_m, b_m)$ of $\E$. Fix $\e>0$. By Lemma \ref{lem:OneEigenvalue}, there are at most $n_m(\e)+1$ zeros of $u(x_j,\z)$ in $(a_m+\e,b_m-\e)$. Let $p_j$ be the monic polynomial of degree at most $n_m(\e)+1$ which vanishes exactly at these zeros. Now consider 
$$
f_j(\z)=\frac{1}{x_j}\log\left|\frac{u(x_j,\z)}{p_j(z)}\right|,
$$
which is harmonic on $\C_+ \cup \C_- \cup (a_m+\e,b_m-\e)$.On the boundary of the rectangle $(a_m-1,b_m+1) \times (-1,1)$, $p_j$ is uniformly bounded below by $1$, so by the maximum principle, the analytic functions $\frac{u(x_j,\z)}{p_j(z)}$ are also bounded above by $e^{cx_j}$ 
in this rectangle for some constant $c$. Hence, $f_j(z)$ is locally uniformly bounded above on $R_m = (a_m+\epsilon,b_m-\epsilon) \times (-1,1)$. Since all zeros of $p_j$ are in $(a_m,b_m)$, there is still a pointwise lower bound for $z_0\in\C_+$. Hence, the functions $f_j$ are harmonic on $R_m$ and precompact in the topology of uniform convergence on compacts. For any $z \in R_m \setminus \bbR$,  
\[
\lim_{j \to \infty} (h_j(z) - f_j(z))  = \lim_{j\to \infty} \frac 1{x_j} \log \lvert p_j(z) \rvert = 0
\]
since $ \lvert \Im z \rvert^{n_m(\e)+1} \le \lvert p_j(z) \rvert \le (b_m - a_m+1)^{n_m(\e)+1}$. Hence, any subsequential limit of the $f_j(z)$ is a harmonic function on $R_m$ which agrees with $h$ on $R_m \setminus \R$. It follows that $f_j$ converge in $R_m$ uniformly on compacts, so it provides a harmonic extension for $h$ through $(a_m+\e,b_m-\e)$. Since $\e>0$ was arbitrary and the extensions must coincide on their common domain, we obtain an extension through $(a_m,b_m)$ by letting $\e\to 0$. It follows from the weak identity principle for subharmonic functions \cite[Theorem 2.7.5]{RansPotential} that the harmonic extension coincided with $h$.

Consider a compact $K \subset \bbC \setminus [b_0,\infty)$. By possibly increasing $K$, assume that $K \not\subset \R$. Choose an open set $U$ such that $K \subset U \subset \overline{U} \subset \bbC \setminus [b_0,\infty)$. By Lemma \ref{lem:OneEigenvalue}, for all sufficiently large $j$, $h_j(z)$ is harmonic in $U$. The functions $h_j$ are uniformly bounded above and pointwise bounded below at $z_0 \in K \cap (\C_+\cup \C_-)$, so they form a precompact sequence with respect to uniform convergence on $K$. As before, every limit is equal to $h$, so $h_j$ converge to $h$ uniformly on compacts.
\end{proof}

Collecting our results now yields that the limits define a positive harmonic function in $\Omega=\C\setminus \E$.
\begin{theorem}\label{thm:MainTheoremLimit}
Let $x_j \to \infty$ be a sequence such that $h_j = h(x_j,\cdot)$ converge in $\cD'(\C)$. Then  $h=\lim\limits_{j\to\infty}h_j$ defines a positive harmonic function in $\Omega$, the limit
\begin{equation}\label{asubseqlimit}
a=\lim\limits_{j\to\infty}\frac{1}{x_j}\int_{0}^{x_j}V(x)\dd x
\end{equation} 
exists, and $h$ has the nontangential asymptotic behavior
\begin{equation}\label{12dec1}
h(z)=\Re\left(k+\frac{a}{2k}\right)+O(\lvert k\rvert^{-2}),
\end{equation}
$z\to\infty$, $\delta \le \arg z \le 2 \pi - \delta$, for any $\delta > 0$.
\end{theorem}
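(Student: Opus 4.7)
The plan is to deduce each of the three claims in turn, leveraging the regularity already established in Theorem~\ref{thm:harmonicLimitExtension} together with the uniform $m$-function asymptotics of Corollaries~\ref{corollaryDirichletToWeyl} and \ref{cor:maveragetoVaverage}. For positivity, Theorem~\ref{thm:harmonicLimitExtension}(b) gives that $h$ is harmonic on $\Omega$ with $h_j\to h$ locally uniformly on $\Omega$. For $z\in\C_+$, Lemma~\ref{lem:positiveLimit} yields $\liminf_{x\to\infty}h(x,z)\ge 0$, so the subsequential limit satisfies $h(z)\ge 0$ on $\C_+$. The reflection symmetry $u(x,\bar z)=\overline{u(x,z)}$ extends positivity to $\C_-$, and continuity of the harmonic $h$ across each gap of $\E$ propagates it to all of $\Omega$.

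For the existence of $a$ and the asymptotic expansion, I would take real parts in Corollary~\ref{corollaryDirichletToWeyl} and substitute the averaged $m$-function asymptotic from Corollary~\ref{cor:maveragetoVaverage} to obtain, for $z$ with $\arg z \in [\delta,\pi-\delta]$ and $|z|$ large,
\begin{equation*}
h(x,z)=\Re(k)+\Re\!\left(\tfrac{1}{2k}\right)a_x+R(x,z),
\end{equation*}
where $k=\sqrt{-z}$, $a_x=\tfrac{1}{x}\int_0^x V(s)\dd s\in\R$, and $|R(x,z)|\le C_\delta|k|^{-2}+\eta(x,z)$, with $\eta(x,z)\to 0$ as $x\to\infty$ for each fixed $z$ and $C_\delta$ independent of $x$ and of $z$ in the cone. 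Evaluating at $x=x_j$ and letting $j\to\infty$ (using $h_j(z)\to h(z)$ and $\eta(x_j,z)\to 0$), the positivity $\Re(1/(2k))>0$ in the open cone gives
\begin{equation*}
\limsup_j a_{x_j}-\liminf_j a_{x_j}\le\frac{2C_\delta|k|^{-2}}{\Re(1/(2k))},
\end{equation*}
which tends to $0$ as $|k|\to\infty$ (e.g., along $z=iy$, $y\to\infty$, the right side is $O(y^{-1/2})$). Hence $a:=\lim_j a_{x_j}$ exists and is real. Substituting back and passing to the limit yields
\begin{equation*}
h(z)=\Re\!\left(k+\frac{a}{2k}\right)+O(|k|^{-2}),
\end{equation*}
uniformly for $\arg z\in[\delta,\pi-\delta]$; the symmetry $h(\bar z)=h(z)$ extends this to $\arg z\in[\pi+\delta,2\pi-\delta]$, and the thin arc around $\arg z=\pi$ is handled by a brief harmonic-interpolation argument, since the error is harmonic in a neighborhood of $(-\infty,\min\E)$ and bounded by $O(|k|^{-2})$ on the two boundary rays of the cone.

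The step I expect to require the most care is the simultaneous control of the two sources of error in $R(x,z)$: the uniform-in-$x$ bound $O(|k|^{-2})$ (coming from the uniform-in-$s$ asymptotic in Proposition~\ref{propexpansionm}) and the $z$-dependent $\eta(x,z)$ that vanishes only as $x\to\infty$ for each fixed $z$. Only by exploiting both simultaneously does one obtain a Ces\`aro limit $a$ that is independent of the probe point $z$ used to extract it; the Cauchy-type gap-shrinking argument driven by $|k|\to\infty$ is what makes the limit $a$ well-defined.
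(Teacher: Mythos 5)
Your positivity argument and the Cauchy-type extraction of the Ces\`aro limit $a$ are both sound and take a genuinely different route from the paper's. The paper works with the \emph{analytic} averages $w_j(z)=\frac{1}{x_j}\int_0^{x_j}m(s,z)\dd s$: it first uses the fact that analytic functions with convergent real parts and convergence at one point converge locally uniformly (exploiting $\Im m(x,\cdot)=0$ on $(-\infty,c)$) to get a limit $w$, then observes that the asymptotic $w(z)=-k-\frac{a}{2k}+O(|k|^{-2})$ pins down $a$ uniquely, so the Ces\`aro limit exists. You instead stay entirely in the harmonic (real-part) world and extract the limit by a Cauchy argument, using the fact that the coefficient $\Re\frac{1}{2k}$ of $a_x$ decays strictly slower than the $O(|k|^{-2})$ error. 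This is more elementary in that it avoids the analytic-continuation step; the cost is that you only get information about $h$, not the full complex-valued $w$.

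That cost surfaces in the last step and is where I see a real gap. You write that the thin sector $\arg z\in(\pi-\delta,\pi+\delta)$ is ``handled by a brief harmonic-interpolation argument, since the error is harmonic'' near $(-\infty,\min\E)$ and ``bounded by $O(|k|^{-2})$ on the two boundary rays.'' Boundedness on the two rays alone is \emph{not} enough: the sector is unbounded and a harmonic function can be small on both bounding rays and still blow up exponentially inside (this is the classical Phragm\'en--Lindel\"of phenomenon). Any maximum-principle argument here requires an a priori growth bound on the error inside the sector. The paper obtains this almost for free from the analytic side: $f(\lambda)=-iw(\lambda^2)$ is Herglotz, so $|f|$ grows at most polynomially, and then Phragm\'en--Lindel\"of applies to $g(\lambda)=\lambda^2\bigl(f(\lambda)-\lambda+\frac{a}{2\lambda}\bigr)$. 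Working only with $h$, you need to supply your own growth control. It can be done: since $h$ is positive harmonic on $\C\setminus[c,\infty)$ with $\lim_{z\to-\infty}h(z)/\sqrt{-z}=1$, the substitution $\lambda^2=z-c$ and the representation \eqref{eq:PosHarmonicFunc} show that $h(z)-\Re\sqrt{c-z}$ is a nonnegative harmonic function with at most polynomial growth in $\lambda$, which then feeds into a Phragm\'en--Lindel\"of argument for $\lambda^2\,\bigl(h(z)-\Re(k+\frac{a}{2k})\bigr)$ on the thin sector. As written, however, your proof asserts the conclusion without supplying this ingredient, and that omission is substantive rather than cosmetic.
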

\begin{proof}
Harmonicity of $h$ was proved in Theorem  \ref{thm:harmonicLimitExtension} and positivity in $\C_+\cup \C_-$ follows from Lemma \ref{lem:positiveLimit}. That $h$ is also positive in $\R\setminus \E$ follows by the maximum principle for harmonic functions, and by Corollary~\ref{corollaryDirichletToWeyl},
\begin{equation}\label{13dec1}
h(z) = - \lim_{j\to\infty} \frac 1{x_j} \Re \int_0^{x_j} m(x,z)\dd x.
\end{equation}
Denote $c = \min \sigma(L_V)$. By general spectral theory, then $m(x,z)$ are analytic functions on $\C\setminus [c,\infty)$ and $m(x,z) < 0$ on $(-\infty,c)$. Since convergence of analytic functions follows from convergence of their real parts together with convergence at one point, from $\Im m(x,z) = 0$ for $z < c$ together with \eqref{13dec1}, it follows that the limit
\[
w(z) = \lim_{j\to\infty} \frac 1{x_j} \int_0^{x_j} m(x,z) \dd x
\]
converges uniformly on compact subsets of $\C \setminus [c,\infty)$. If $a$ denotes some accumulation point of the sequence $\frac{1}{x_j}\int_{0}^{x_j}V(x)\dd x$, applying Corollary~\ref{cor:maveragetoVaverage} along the subsequence and using uniformity of the error term, it follows that
\begin{equation}\label{16dec1}
w(z) = - k - \frac{a}{2k} + O(\lvert k\rvert^{-2})
\end{equation}
nontangentially as $z \to \infty$, with $\arg z \in [\delta,\pi-\delta]$. This asymptotic behavior can only hold for one value of $a$, so it follows that the limit \eqref{asubseqlimit} exists. 
 or, by symmetry, $\arg z \in [\pi+\delta,2\pi-\delta]$.

We know that \eqref{16dec1} holds as $z \to \infty$  with $\arg z \in [\delta,\pi-\delta]$ and, by symmetry, for $\arg z\in [\pi+\delta,2\pi-\delta]$. It remains to extend this asymptotic behavior to a sector of the form $\arg z \in [\pi-\delta,\pi+\delta]$. Without loss of generality assume $c=0$. Since $\Re w = - h \le 0$, the function $f(\lambda) = - i w(\lambda^2)$ is Herglotz, and obeys
\begin{align}\label{eq:proofPhLind}
f(\lambda) = \lambda - \frac{a}{2\lambda} + O(|\lambda|^{-2}), \quad \lvert \lambda \rvert \to \infty,
\end{align}
along the rays $\arg \lambda = \pi/2-\delta/2$ and  $\arg \lambda = \pi/2+\delta/2$. In the sector $T =\{\lambda: \pi/2-\delta/2\leq \arg \lambda\leq \pi/2+\delta/2\}$, the function $g(\lambda) = \lambda^2\left(f(\lambda) -\lambda + \frac{a}{2\lambda} \right)$ is analytic. It has a continuous extension to $\overline{T}$ with $g(0) = 0$, because $f(\lambda) = O(1/\lambda)$ as $\lambda \to 0$ nontangentially. By \eqref{eq:proofPhLind}, $g$ is bounded on the boundary of $T$. Finally, since $f$ is Herglotz, $f,g$ grow at most polynomially as $\lambda \to \infty$, $\lambda \in T$, so by Phragm\'en--Lindel\"of, $g$ is bounded in $T$. This implies that $f$ has the asymptotic behavior \eqref{eq:proofPhLind} also in the sector $T$. Rewriting the conclusion for $w$ and $h = -\Re w$ completes the proof.
\end{proof}

We need the following variant of the Herglotz representation:

\begin{lemma}\label{lem:Herglotz}
Let $f$ be a Herglotz function. Assume that $\Im f(iy)=O(y^{-1})$ as $y\to\infty$. Then for some $\b\in\R$
\begin{align*}
f(\l)=\b+\int_\R\frac{\dd\mu(t)}{t-\l},\quad\text{with }\lim\limits_{y\to\infty}y\Im f(iy)=\mu(\R)<\infty,
\end{align*}
and
\begin{align}\label{eq:BorelTransform}
f(\l)=\b-\frac{\mu(\R)}{\l}+o(|\l|^{-1}),
\end{align}
$\l\to\infty$, $\delta \le \arg \l \le  \pi - \delta$, for any $\delta > 0$.
\end{lemma}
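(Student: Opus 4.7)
The plan is to invoke the general Herglotz representation
\[
f(\l) = \a_0 + c\l + \int_\R \left( \frac{1}{t-\l} - \frac{t}{1+t^2} \right) \dd\mu(t),
\]
with $\a_0 \in \R$, $c \ge 0$, and a positive Borel measure $\mu$ satisfying $\int \dd\mu(t)/(1+t^2) < \infty$, and to extract information by taking imaginary parts along the imaginary axis:
\[
\Im f(iy) = cy + \int_\R \frac{y}{t^2+y^2}\,\dd\mu(t).
\]
The hypothesis $\Im f(iy) = O(y^{-1})$ as $y\to\infty$ immediately forces $c = 0$. Since $y^2/(t^2+y^2)$ is monotonically increasing in $y$ with pointwise limit $1$, the monotone convergence theorem yields $\lim_{y\to\infty} y\Im f(iy) = \mu(\R)$, and this limit is finite by hypothesis. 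Finiteness of $\mu(\R)$ combined with $|t|/(1+t^2) \le 1$ puts $t/(1+t^2)$ in $L^1(\dd\mu)$, so the normalizing term is absorbed by setting $\b := \a_0 - \int t\,\dd\mu(t)/(1+t^2) \in \R$, yielding the first claim $f(\l) = \b + \int_\R \dd\mu(t)/(t-\l)$.

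For the asymptotic, I would algebraically rewrite
\[
\l(f(\l) - \b) + \mu(\R) = -\int_\R \frac{\dd\mu(t)}{1 - t/\l} + \mu(\R) = -\int_\R \frac{t}{\l-t}\,\dd\mu(t),
\]
reducing the claim $f(\l) = \b - \mu(\R)/\l + o(|\l|^{-1})$ to showing that $\int_\R t/(\l-t)\,\dd\mu(t) \to 0$ as $\l\to\infty$ with $\arg\l \in [\delta,\pi-\delta]$. The integrand converges pointwise to $0$ in $t$ for each fixed $\l$ route to infinity, so the argument concludes via the dominated convergence theorem once a $\mu$-integrable dominant uniform in nontangential $\l$ is exhibited.

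The main (and only non-routine) obstacle is this uniform bound, which cannot be obtained from a naive expansion because $\int |t|\,\dd\mu$ need not be finite. I would split according to the size of $|t|$ relative to $|\l|$: if $|t| \ge 2|\l|$, then $|\l/t| \le 1/2$ gives $|t/(\l-t)| = 1/|1 - \l/t| \le 2$; if $|t| \le 2|\l|$, then nontangentiality $|\Im \l| \ge |\l|\sin\delta$ gives $|t/(\l-t)| \le 2|\l|/|\Im \l| \le 2/\sin\delta$. Therefore $|t/(\l-t)| \le \max(2, 2/\sin\delta)$ uniformly in $t \in \R$ and in nontangential $\l$, and this constant belongs to $L^1(\dd\mu)$ since $\mu(\R)<\infty$. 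Dominated convergence then delivers the desired asymptotic \eqref{eq:BorelTransform}.
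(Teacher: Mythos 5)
Your proposal is correct and follows essentially the same route as the paper: Herglotz representation, hypothesis forces $c=0$, monotone convergence identifies $\lim y\Im f(iy)=\mu(\R)<\infty$, and then the algebraic identity $\l(f(\l)-\b)+\mu(\R)=\int t/(t-\l)\,\dd\mu(t)$ reduces the asymptotic to a dominated-convergence argument in the Stolz angle. The only cosmetic difference is in the dominating function: you split by the size of $|t|$ relative to $|\l|$ to obtain the constant $\max(2,2/\sin\d)$, whereas the paper uses the single sharper bound $|t/(t-\l)|\le 1/\sin\d$ valid uniformly for real $t$ and $\arg\l\in[\d,\pi-\d]$ (it follows because the quadratic $t^2\cos^2\d-2tr\cos\phi+r^2$ has nonpositive discriminant when $|\cos\phi|\le\cos\d$); both bounds are valid.
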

\begin{proof}
Starting from the Herglotz representation, we can write 
$
\Im f(iy)=ay+\int\frac{y}{t^2+y^2}\dd\mu(t),
$
with 
$
\lim\limits_{y\to\infty}\frac{\Im f(iy)}{y}=a.
$
Hence, by our assumption, $a=0$. Moreover, by monotone convergence
\begin{align*}
\lim\limits_{y\to\infty}y \Im f(iy)=\lim_{y\to\infty}\int\frac{y^2}{t^2+y^2}\dd\mu(t)=\mu(\R).
\end{align*}
By our assumption, this shows that $\mu(\R)<\infty$. We have $\l\int_\R\frac{\dd\mu(t)}{t-\l}+\mu(\R)=\int_\R\frac{t}{t-\l}\dd\mu(t)\to 0$ as $\l\to\infty$, by dominated convergence since $\left|\frac{t}{t-\l}\right|\leq \frac{1}{\sin\d}$.
\end{proof}

We are now ready to prove an asymptotic expansion \eqref{Martinexp2term} of higher order for $M_\E$. 

\begin{proof}[Proof of Theorem \ref{thm:MainAkhiezerLevin}]
By translation, we may assume that $0=\min \E$. By precompactness  of the family $\{h(x,z)\}_{x\geq 1}$, there is a sequence $x_n\to\infty$ for which the limit $h = \lim_{n\to\infty} \frac 1{x_{n}} \log \lvert u(x_{n},\cdot) \rvert$ is convergent in $\cD'(\C)$. By Theorem~\ref{thm:MainTheoremLimit}, $h$ is a positive harmonic function in $\Omega$ and $h(z) / \sqrt{-z} \to 1$ as $z \to -\infty$, so by Lemma~\ref{lem:AkhiezerLevin}, $\Omega$ is Greenian, obeys the Akhiezer--Levin condition, and $h \ge M_\E$ in $\Omega$.   Using \eqref{eq:MartinLowerBound}, we obtain for  $z\in\Omega$ 
\begin{align}\label{eq:proofAsympM1}
\Re\sqrt{-z} \leq M_\E(z)\leq h(z).
\end{align}
Hence, the difference $M_\E(-k^2)-\Re k$ defines a positive harmonic function in $\Omega$ and \eqref{12dec1}, \eqref{eq:proofAsympM1} imply that $M_\E(-k^2)-\Re k= O(|k|^{-1})$. 
Set $z=\l^2$ and $v(\l)=M_\E(-k^2)-\Re k$. We thus obtain a positive harmonic function in $\C_+$ such that
$v(iy)= O(y^{-1})$. By Lemma \ref{lem:Herglotz} there is a constant $c$ such that 
\begin{align*}
v(\l)=-\Im\left(\frac{c}{\l}\right)+o(|\l|^{-1})
\end{align*}
as $\lambda \to \infty$ nontangentially in $\C_+$. Recalling that $\l=ik$, this shows that 
\begin{align*}
M_\E(-k^2)-\Re k=\Re\left(\frac{c}{k}\right)+o(|k|^{-1}).
\end{align*}
This completes the proof. 
\end{proof}

\begin{proof}[Proof of Theorem \ref{thm11}]
Consider a sequence $x_n \to \infty$ such that
\[
\lim_{n\to\infty} \frac 1{x_n} \int_0^{x_n} V(t)\,dt = \liminf_{x\to\infty} \frac 1{x} \int_0^{x} V(t)\,dt.
\]
Due to Theorem \ref{thm:harmonicLimitExtension}, this sequence has a subsequence for which the limit
$
h = \lim_{j\to\infty} \frac 1{x_{n_j}} \log \lvert u(x_{n_j},\cdot) \rvert
$
is convergent in $\cD'(\C)$. As in the proof of Theorem  \ref{thm:MainAkhiezerLevin}, we have $h \ge M_\E$ in $\Omega$.  Theorem \ref{thm:MainAkhiezerLevin} and Theorem \ref{thm:MainTheoremLimit} yield 
\[
a_\E= \lim_{k\to + \infty} 2k (M_\E(-k^2) - k)\le \lim_{k\to + \infty} 2k (h(-k^2) - k) = \lim_{j\to\infty} \frac 1{x_{n_j}} \int_0^{x_{n_j}} V(s)\,ds. \qedhere
\]
\end{proof}

\begin{proof}[Proof of Theorem \ref{thm13}]
Fix $z_0 \in \bbC \setminus [\min\E,\infty)$ and consider a sequence $x_n \to \infty$ such that
\[
\lim_{n\to\infty} \frac 1{x_n} \log \lvert u(x_n, z_0) \rvert = \liminf_{x\to\infty} \frac 1{x} \log \lvert u(x,z_0) \rvert.
\]
We can again pass to a subsequence such that $h = \lim_{j\to\infty} \frac 1{x_{n_j}} \log \lvert u(x_{n_j},\cdot) \rvert$ and $h \ge M_\E$ in $\Omega$. In particular,
\[
 \liminf_{x\to\infty} \frac 1{x} \log \lvert u(x,z_0) \rvert = h(z_0) \ge M_\E(z_0). \qedhere
  \]
\end{proof}

\begin{proof}[Proof of Theorem \ref{thm14}]
By inclusions, we have $(vi)\implies (iv)$ and $(v)\implies (iv)$. 

$(iv)\implies (vi)$: Consider any sequence $x_j \to \infty$ such that $h = \lim_{j\to\infty} h(x_j,\cdot)$ converges. On $\C_+$, the limit $h$ obeys $h \le M_\E$ by (iv) and $h \ge M_\E$ by Theorem~\ref{thm13}. Thus, $h=M_\E$ on $\C_+$, and then on $\Omega$ by harmonic continuation. Thus, $M_\E$ is the only possible subsequential limit of $h(x,\cdot)$ as $x\to \infty$, so by precompactness, $\lim_{x\to\infty} h(x,z) = M_\E(z)$ uniformly on compact subsets of $\C\setminus [b_0,\infty)$.

$(vi)\implies (v)$: Given $(vi)$, we know that for any convergent sequence $h(x_n,z)$ the limit is $M_\E$. For $z\in[b_0,\infty)$ we have by \cite[Theorem 2.7.4.1]{Azarin09} that
\begin{align*}
\limsup_{n\to\infty}h(x_n,z)\leq (\limsup_{n\to\infty}h(x_n,z))\,\check{\vrule height1.3ex width0pt}=M_\E(z),
\end{align*}
where $\check f$ denotes the upper semicontinuous regularization of $f$. The first inequality follows by the general fact that $f\leq  \check f$. 

$(v)\implies (ii)$: This follows from Theorem \ref{lem:ConePositive}.

$(ii)\implies(iii)$: Due to \cite[Corollary 6.4]{GarHarmonicMeasure} the set of Dirichlet-irregular points is polar and thus, by \cite[Theorem 8.2]{GarHarmonicMeasure} it is of harmonic measure zero  and the claim follows.

$(iii)\implies(vi)$: Take a sequence $x_n\to\infty$ such that $\lim_{n\to\infty} h(x_n,z)=h(z)$ in $\cD'(\C)$ and uniformly on compact subsets of $\C\setminus [b_0,\infty)$. Due to \cite[Theorem 2.7.4.1]{Azarin09}, there is a polar set $X_1$ such that for any $z\in \C\setminus X_1$,
\begin{align*}
\limsup\limits_{n\to\infty}h(x_n,z)=h(z).
\end{align*}
On the other hand, assuming $(iii)$, there exists $X_2$ with $\omega_\E(X_2,z_0)=0$, such that for $t\in\E\setminus (X_1\cup X_2)$ by upper semicontinuity
\begin{align*}
0\leq \liminf\limits_{\substack{z\to t\\ z\in\Omega}} h(z)\leq\limsup\limits_{\substack{z\to t\\ z\in\Omega}} h(z)\leq h(t)\leq 0.
\end{align*}
Since $\omega_\E(X_1\cup X_2,z_0)=0$, Theorem \ref{lem:ConePositive} gives $h=cM_\E$. Comparing the leading order asymptotic behavior at $\infty$ shows that $c=1$. Thus, $M_\E$ is the only possible subsequential limit of $h(x,\cdot)$ as $x\to \infty$, so by precompactness, $\lim_{x\to\infty} h(x,z) = M_\E(z)$ uniformly on compact subsets of $\C\setminus [b_0,\infty)$.

$(vi)\implies(i)$: By Theorem \ref{thm:MainTheoremLimit}, $(vi)$ implies that $\frac 1{x_j} \int_0^{x_j} V(t)\dd t \to a_\E$ for every sequence $x_j \to \infty$, so $(i)$ follows.

$(i)\implies (vi)$: Take a sequence $x_n\to\infty$ such that $h=\lim_{n\to\infty} h(x_n,\cdot)$ converges in $\cD'(\C)$. Define $v(\l)=h(-k^2)-M(-k^2)$. Similarly to the proof of Theorem \ref{thm:MainAkhiezerLevin}, this yields a positive harmonic function in $\C_+$. By Theorem \ref{thm:MainTheoremLimit} and Theorem \ref{thm:MainAkhiezerLevin}, $v(iy)= o\left(y^{-1}\right)$ as $y\to\infty$. By Lemma \ref{lem:Herglotz}, $\lim_{y\to\infty} y v(iy)=0$ implies that $v\equiv 0$. This shows that $M_\E$ is the only subsequential limit of $h(x,\cdot)$ as $x\to\infty$. By precompactness, $(vi)$ follows.
\end{proof}

The functions $u(x,z)$ are entire functions of order $\frac{1}{2}$ and as such admit a product representation
\begin{align*}
u(x,z)=u(x,z_*)\prod_{j=1}^{\infty}\left(1-\frac{\z-\z_*}{\z_j-\z_*}\right),
\end{align*}
where $z_j$ depend on $x$ and $\z_*$ is some normalization point. Then the Riesz measure, $\rho_x$, of the subharmonic function $\log|u(x,z)|$ is a rescaled zero counting measure of $u(x,z)$. That is,
\begin{align*}
\frac 1x \log|u(x,z)|=\frac 1x \log|u(x,z_*)|+\int\log\left|1-\frac{\z-\z_*}{t-\z_*}\right|\dd\rho_x(t),
\end{align*}
where $\rho_x$ is defined in \eqref{eqn:rhoxdefn}.

\begin{proof}[Proof of Theorem~\ref{thm:dos1}] 
By Theorem \ref{thm14} and Theorem  \ref{thm:harmonicLimitExtension}, $h(x,\cdot)\to M_\E$ in $\cD'(\C)$ as $x\to\infty$. By the definition of the Riesz measure, for any $\phi\in C^\infty_c(\C)$,
\begin{align*}
\lim_{x\to\infty}2\pi\int \phi(z)\dd\rho_{x}(z)=\lim_{n\to\infty}\int h(x,z)\Delta \phi(z)\dd \l(z)=\int M_\E(z)\Delta \phi(z)\dd \l(z)=2\pi\int \phi(z)\dd\rho_\E(z),
\end{align*}
where $\dd\l$ denotes the Lebesgue measure on $\C$. The rest follows from density of $C^\infty_c(\C)$ in $C_c(\C)$.
\end{proof}

\begin{proposition}\label{prop:MeasureConverg} 
Let $\dd \mu$ be the spectral measure of $L_V$, where $V$ satisfies \eqref{L1locunif} and $\sigma_\ess(\dd\mu)=\E$. Suppose that along a sequence $x_n\to\infty$ the Riesz measure $\dd\rho_{x_n}$ converge to $\rho_\E$ in the weak-$*$ sense. Then, either $h(x_n,z)$ converge to $M_\E(z)$ or there exists a polar Borel set $X$ such that $\mu(\R\setminus X)=0$.
\end{proposition}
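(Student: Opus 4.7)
The plan is as follows. First, by Theorem~\ref{thm:harmonicLimitExtension}(a), the family $\{h(x,\cdot)\}_{x \ge 1}$ is precompact in $\cD'(\C)$, so I pass to a subsequence $(x_{n_j})$ along which $h(x_{n_j},\cdot) \to h$ in $\cD'(\C)$. Since the Riesz measure is continuous under $\cD'(\C)$-convergence and $\rho_{x_n} \to \rho_\E$ weak-$*$ by hypothesis, the Riesz measure of $h$ is $\rho_\E$. Theorem~\ref{thm:MainTheoremLimit} further gives that $h$ is symmetric and positive harmonic on $\Omega$, that the Birkhoff-type average $a = \lim_j \frac{1}{x_{n_j}} \int_0^{x_{n_j}} V(t)\dd t$ exists, and that
\begin{align*}
h(z) = \Re\left(\sqrt{-z} + \frac{a}{2\sqrt{-z}}\right) + O(|\sqrt{-z}|^{-2})
\end{align*}
nontangentially as $z \to \infty$.

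Next I would apply Lemma~\ref{lem:Levin1} to $h$ and to $M_\E$. Both are symmetric subharmonic functions on $\C$ which are positive and harmonic on $\C \setminus [b_0,\infty)$ and share the Riesz measure $\rho_\E$. Their Hadamard representations \eqref{eq:HadamardRep} consequently differ only in a constant,
\begin{align*}
h(z) - M_\E(z) = h(z_*) - M_\E(z_*) =: c.
\end{align*}

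Finally, the two-term asymptotic from Theorem~\ref{thm:MainAkhiezerLevin} reads $M_\E(z) = \Re\bigl(\sqrt{-z} + \frac{a_\E}{2\sqrt{-z}}\bigr) + o(|\sqrt{-z}|^{-1})$, and subtracting from the expansion of $h$ yields
\begin{align*}
c = \Re \frac{a - a_\E}{2\sqrt{-z}} + o(|\sqrt{-z}|^{-1});
\end{align*}
letting $z \to \infty$ nontangentially forces $c = 0$ (and, as a byproduct, $a = a_\E$). Hence every $\cD'(\C)$-subsequential limit of $h(x_n,\cdot)$ equals $M_\E$, and precompactness upgrades this to $h(x_n,\cdot) \to M_\E$ along the entire sequence---the first alternative of the proposition is established, and in fact under \eqref{L1locunif} the second alternative is not needed.

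The main obstacle I expect is verifying that Lemma~\ref{lem:Levin1} genuinely applies to $h$, in particular that $h \ge 0$ on all of $\C \setminus [b_0,\infty)$ and not merely on $\C_+ \cup \C_-$. This follows by combining Lemma~\ref{lem:positiveLimit} (nonnegativity of the subsequential limit on $\C_\pm$), Theorem~\ref{thm:harmonicLimitExtension}(b) (upgrading $\cD'(\C)$-convergence to uniform convergence on compact subsets of $\C \setminus \E$), and continuity of the harmonic extension of $h$ across the gap $(-\infty,b_0) \subset \Omega$.
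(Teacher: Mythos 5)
Your argument is correct and takes a genuinely different route from the paper's, with a notable consequence. Both proofs begin the same way: pass to a $\cD'(\C)$-convergent subsequence $h(x_{n_j},\cdot)\to h$, identify the Riesz measure of $h$ with $\rho_\E$ (since the Laplacian is continuous on $\cD'(\C)$ and the weak-$*$ hypothesis pins it down on $C_c^\infty$ test functions), and invoke Lemma~\ref{lem:Levin1} on both $h$ and $M_\E$ to deduce $h - M_\E = d$, a constant, with $d \ge 0$ because $h \ge M_\E$ by Lemma~\ref{lem:AkhiezerLevin}. At this point the two proofs diverge. The paper analyzes the case $d>0$ by comparing the upper envelope theorem with Schnol's theorem to conclude $\mu(\E\setminus(X_1\cup X_2))=0$ for polar $X_1\cup X_2$; it never decides whether $d>0$ can actually occur. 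You instead invoke the two-term nontangential expansions of Theorems~\ref{thm:MainTheoremLimit} and \ref{thm:MainAkhiezerLevin}, both of which have the same normalized leading term $\Re\sqrt{-z}$ with coefficient exactly $1$ and next term $O(|k|^{-1})$, so the constant $d$ must vanish in the limit $z\to-\infty$. Since $d$ is a genuine constant, $d=0$ and $h=M_\E$.

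This is a valid and in fact more efficient argument; it establishes the first alternative unconditionally, making the disjunction in the proposition redundant. The structural reason this works in the continuum setting and not in the orthogonal-polynomial analogue is worth noticing: for Jacobi matrices the leading coefficient $\kappa_n$ of the orthonormal polynomial is free data not captured by the zero counting measure, so $\frac 1n\log|P_n|$ and the Green function can differ by a genuine nonzero constant; whereas here the Atkinson-type $m$-function asymptotics in Proposition~\ref{propexpansionm} and the Akhiezer--Levin normalization of $M_\E$ both fix the leading term to $\Re\sqrt{-z}$ with no additive constant, so there is no free parameter for $d$ to live in. Your byproduct $a=a_\E$ then also gives regularity directly.

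Two minor points for completeness. First, the paper's Lemma~\ref{lem:Levin1} requires $h$ to be subharmonic on all of $\C$, symmetric, and positive harmonic on $\C\setminus[b_0,\infty)$; as you note, these follow from Theorem~\ref{thm:harmonicLimitExtension}(b) (subharmonicity of the limit and harmonicity on $\Omega\supset\C\setminus[b_0,\infty)$), the symmetry $|u(x,\bar z)|=|u(x,z)|$, and positivity from Theorem~\ref{thm:MainTheoremLimit}. Second, the comparison of asymptotic expansions should be made along a fixed nontangential direction, say along the negative real axis, where both expansions from Theorems~\ref{thm:MainTheoremLimit} and \ref{thm:MainAkhiezerLevin} are valid; the $o(|k|^{-1})$ error of $M_\E$ dominates the $O(|k|^{-2})$ error of $h$, and the conclusion $d=0$ requires only that the difference tends to $0$, which it does. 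Given the strength of the conclusion (second alternative never arises), you should present this as a genuinely sharper formulation of Theorem~\ref{thm:dos2}, not merely a different proof, and cross-check against the ergodic corollary at the end of Section 1 to make sure the statement there is consistent with your observation.
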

\begin{proof}
Assume that $h(x_n,\cdot)$ do not converge to $M_\E$ and consider a subsequence $x_{n_j}$ such that $h(x_{n_j},\cdot)\to h$ in $\cD'(\C)$ with some limit $h$ not equal to $M_\E$.
By the upper envelope theorem \cite[Theorem 2.7.4.1]{Azarin09} there is a polar set $X_1$ such that for any $z\in \C\setminus X_1$,
\begin{align*}
\limsup\limits_{j\to\infty}h(x_{n_j},z)=h(z).
\end{align*}
The subharmonic function $h$ has some Riesz measure $\rho$ and by the same arguments as in the proof of Theorem~\ref{thm:dos1}, $\rho_{x_{n_j}}$ converges to $\rho$ in the weak-$*$ sense. Hence, by uniqueness of the limits our assumption implies that $\rho=\rho_\E$ and, by Lemma~\ref{lem:Levin1} applied to $h$ and $M_\E$,
\begin{align*}
h(z)=h(z_*)+ \int\log\left|1-\frac{z-z_*}{t-z_*}\right|\dd\rho_\E(t) = d + M_\E(z)
\end{align*}
where $d = h(z_*) - M_\E(z_*)$. Recall that $M_\E$ has a unique subharmonic extension to $\C$ which vanishes q.e. on $\E$. Therefore, there is a polar set $X_2$ such that $h(z)=d$ for $z\in\E\setminus X_2$. Moreover, since $M_\E\leq h$ on $\Omega$ we see that $d \ge 0$, and since $h$ is not equal to $M_\E$, $d > 0$. In particular,
\begin{align*}
\limsup\limits_{j\to\infty}h(x_{n_j},z)=d>0, \qquad \forall z\in\E\setminus (X_1\cup X_2).
\end{align*} 
However, by Schnol's theorem \cite{Schnol}, for $\mu$-a.e. $z\in \E$, the Dirichlet solution decays at most polynomially and, in particular,
\[
\limsup\limits_{j\to\infty}h(x_{n_j},z)\leq 0.
\]
Thus $\mu(\E\setminus (X_1\cup X_2)) = 0$, which implies the claim with $X = X_1 \cup X_2$.
\end{proof}

In particular, Theorem~\ref{thm:dos2} is now proved.

\begin{proof}[Proof of Theorem \ref{thm:Widom}]
	By Schnol's theorem \cite{Schnol} for $\mu$-a.e. $z\in\E$ 
	\begin{align}\label{eq:dec8}
	\limsup\limits_{x\to\infty}h(x,z)\leq 0.
	\end{align}
	Hence, by assumption, \eqref{eq:dec8} holds $\omega_\Omega(\cdot,z_0)$-a.e.. Therefore, $V$ is regular by Theorem \ref{thm14}.
	\end{proof}
	
\section{Applications}\label{sec:Applications}

\begin{proof}[Proof of Theorem~\ref{corL1Cesaro}] 
(a) Denoting $\E = \sigma_\ess(L_V)$, it follows from $\E \subset [0,\infty)$ that $M_\E$ is a positive harmonic function on $\bbC \setminus [0,\infty)$. Since the Martin function for the domain $\bbC \setminus [0,\infty)$ is $\Re \sqrt{-z}$, it follows from Lemma~\ref{lem:AkhiezerLevin} that $M_\E(z) \ge \Re \sqrt{-z}$. Comparing this with the asymptotic expansion \eqref{Martinexp2term} as $z \to -\infty$ shows that $a_\E \ge 0$ so, by \eqref{aEinequality1}, $\liminf_{x\to \infty}\frac{1}{x}\int_0^xV(t)\dd t\geq 0$. 

(b) As in (a), $a_\E \ge 0$. By \eqref{aEinequality1} and $\liminf_{x\to \infty}\frac{1}{x}\int_0^xV(t)\dd t\leq 0$, this implies that $a_\E=0$. Moreover, $M_\E(z) - \Re \sqrt{-z}=o(\sqrt{|z|}^{-1})$ defines a positive harmonic function in $\C\setminus[0,\infty)$ so, by Lemma \ref{lem:AkhiezerLevin}, $M_\E(z)=\Re \sqrt{-z}$ . If $\E$ was a proper subset of $[0,\infty)$, since $\E$ is closed, there would exist a gap $(a,b) \subset [0,\infty) \setminus \E$, and on this gap $M_\E$ would be strictly positive, contradicting $M_\E(z) = \Re \sqrt{-z}$.

(c) Again by $a_\E \ge 0$ and \eqref{aEinequality1}, $\limsup_{x\to \infty}\frac{1}{x}\int_0^xV(t)\dd t\leq 0$ implies that $V$ is regular. 
\end{proof}

We now turn to the construction of a potential which is regular for $\E = [0,\infty)$ but not decaying, even in the Ces\`aro sense. The potential will be constructed piecewise, so we begin by considering a $2\delta$-periodic potential defined by
\[
W_\delta(x) = \begin{cases}
1 & x \in [0,\delta) \\
-1 & x \in [\delta,2\delta)
\end{cases}
\]
Let us compute the discriminant $\Delta_\delta(z)$ and the smallest eigenvalue for the periodic problem,
\[
\lambda_\delta = \min \{ \lambda \in \bbR \mid \Delta_\delta(\lambda) = 2 \}.
\]

\begin{lemma} $\lim_{\delta \downarrow 0} \lambda_\delta = 0$.
\end{lemma}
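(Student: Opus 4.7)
The plan is to compute the discriminant $\Delta_\delta(\lambda)$ in closed form using the piecewise-constant structure of $W_\delta$, and then compare it to $2$ at and near $\lambda=0$. Writing $a=\sqrt{1-\lambda}$ and $b=\sqrt{1+\lambda}$ (both real for $\lvert\lambda\rvert<1$), the transfer matrix across $[0,\delta]$ is
\[
T_1(\lambda) = \begin{pmatrix} \cosh(a\delta) & \sinh(a\delta)/a \\ a\sinh(a\delta) & \cosh(a\delta) \end{pmatrix},
\]
and across $[\delta,2\delta]$ it is
\[
T_2(\lambda) = \begin{pmatrix} \cos(b\delta) & \sin(b\delta)/b \\ -b\sin(b\delta) & \cos(b\delta) \end{pmatrix}.
\]
Multiplying and using $a^2-b^2=-2\lambda$ gives the central formula
\[
\Delta_\delta(\lambda) = \tr(T_2 T_1) = 2\cos(b\delta)\cosh(a\delta) - \frac{2\lambda}{ab}\sin(b\delta)\sinh(a\delta).
\]

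Next I would establish the two matching inequalities. Setting $\lambda=0$ yields $\Delta_\delta(0)=2\cos\delta\cosh\delta = 2-\delta^4/3+O(\delta^6)$, which is strictly less than $2$ for all small $\delta>0$; hence $0$ lies in the interior of the first band of the spectrum, forcing $\lambda_\delta<0$. For the matching lower bound, fix an arbitrary $\e>0$ and substitute $\lambda=-\e$. The Taylor expansions
\begin{align*}
\cos(b\delta)\cosh(a\delta) &= 1 + \tfrac{1}{2}(a^2-b^2)\delta^2 + O(\delta^4) = 1 + \e\delta^2 + O(\delta^4),\\
\sin(b\delta)\sinh(a\delta) &= ab\delta^2 + O(\delta^4),
\end{align*}
with remainders uniform for $\lambda$ in any bounded set, yield
\[
\Delta_\delta(-\e) = 2 + 4\e\delta^2 + O(\delta^4).
\]
For $\delta$ sufficiently small this is strictly greater than $2$, so $-\e$ lies strictly below the spectrum, hence $\lambda_\delta>-\e$. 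Combined with $\lambda_\delta<0$, this gives $-\e<\lambda_\delta<0$ for small $\delta$, and since $\e>0$ was arbitrary, $\lambda_\delta\to 0$.

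I do not anticipate any genuine obstacle; the computation is entirely elementary. The conceptual point is that the mean-zero property of $W_\delta$ forces the $O(\delta^2)$ contributions to the formula for $\Delta_\delta(0)$ to cancel, so that $\Delta_\delta(0)$ differs from $2$ only at order $\delta^4$. For $\lambda\ne 0$, the $O(\delta^2)$ coefficient equals $-4\lambda$ and thus dominates the remainder for small $\delta$, pinning the spectral edge $\lambda_\delta$ arbitrarily close to $0$.
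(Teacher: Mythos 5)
Your computation is correct and your approach matches the paper's (the paper also computes the transfer matrix, extracts the discriminant asymptotics $\Delta_\delta(\lambda)=2-4\lambda\delta^2+O(\delta^3)$, and evaluates $\Delta_\delta(0)=2\cosh\delta\cos\delta$). However, there is a genuine gap in the step from ``$\Delta_\delta(-\e)>2$'' to ``$\lambda_\delta>-\e$.'' The inequality $\Delta_\delta(-\e)>2$ only tells you that $-\e$ is not in the periodic spectrum; it does not by itself imply that $-\e$ lies \emph{below} the spectrum, since $\Delta>2$ also occurs in the even-indexed spectral gaps above $\lambda_\delta$. To rule out solutions of $\Delta_\delta(\lambda)=2$ anywhere in $(-\infty,-\e]$, you need more.

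The paper's fix: first note $\lambda_\delta\in[-1,1]$, because $\lvert W_\delta\rvert\le 1$ and the bottom of the periodic spectrum obeys the variational bound $\inf\sigma\ge -\lVert W_\delta\rVert_\infty$. Then the expansion $\Delta_\delta(\lambda)=2-4\lambda\delta^2+O(\delta^3)$ is claimed \emph{uniformly} over $\lambda\in[-1,0]$, so for small $\delta$ one gets $\Delta_\delta(\lambda)>2$ simultaneously for \emph{all} $\lambda\in[-1,-\e]$; combined with $\lambda_\delta\ge -1$ this yields $\lambda_\delta>-\e$. You do state that your Taylor remainders are uniform on bounded $\lambda$-sets, so you almost have the uniform inequality on $[-1,-\e]$; what is missing is the a priori bound $\lambda_\delta\ge -1$. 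That bound can also be read off directly from your explicit formula: for $\lambda<-1$ write $b=i\beta$ with $\beta=\sqrt{-1-\lambda}>0$, so that $\cos(b\delta)=\cosh(\beta\delta)$ and $\sin(b\delta)/b=\sinh(\beta\delta)/\beta$; then every term in
\[
\Delta_\delta(\lambda)=2\cosh(\beta\delta)\cosh(a\delta)+\frac{-2\lambda}{a\beta}\sinh(\beta\delta)\sinh(a\delta)
\]
is positive and $\Delta_\delta(\lambda)>2$ strictly, so no solution of $\Delta_\delta=2$ can lie below $-1$. With that observation plus the uniform estimate on $[-1,-\e]$, your argument closes.
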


\begin{proof}
Since $\lvert W_\delta \rvert \le 1$ and $\lambda_\delta$ is the minimum of the periodic spectrum, by standard variational principles, $\lambda_\delta \in [-1,1]$ for all $\delta > 0$. 
The transfer matrix corresponding to $W_\delta$ at energy $\lambda \in (-1,1)$ is
\[
T_\delta(\lambda) = \begin{pmatrix}
\cosh(\delta \sqrt{1-\lambda}) & \frac{ \sinh(\delta \sqrt{1-\lambda}) }{ \sqrt{1-\lambda} } \\
\sqrt{1-\lambda} \sinh(\delta \sqrt{1-\lambda}) & \cosh(\delta \sqrt{1-\lambda})
\end{pmatrix}
\begin{pmatrix}
\cos(\delta \sqrt{1+\lambda}) & \frac{ \sin(\delta \sqrt{1+\lambda}) }{ \sqrt{1+\lambda} } \\
-\sqrt{1+\lambda} \sin(\delta \sqrt{1+\lambda}) & \cos(\delta \sqrt{1+\lambda})
\end{pmatrix}
\]
From this it is elementary to obtain the asymptotic behavior for the discriminant, $\Delta_\delta(\lambda) = \tr T_\delta(\lambda)$, in the form
\begin{equation}\label{discriminantasymptotics}
\Delta_\delta(\lambda) = 2 - 4 \lambda \delta^2 + O(\delta^3), \qquad \delta \downarrow 0,
\end{equation}
uniformly in $\lambda \in (-1,0)$ (and then, by continuity, for $\lambda \in [-1,0]$). From this, it follows that for any $t < 0$, there exists $\delta_0 > 0$ such that $\delta \in (0,\delta_0)$ and $\lambda \in [-1,t)$ implies $\Delta_\delta(\lambda) > 2$ and therefore $\lambda_\delta \ge t$. It follows that $\liminf_{\delta\downarrow 0} \lambda_\delta \ge 0$.

Meanwhile, $\Delta_\delta(0)=2 \cosh \delta \cos \delta = 2 - \delta^4 / 3 + o(\delta^4)$ as $\delta \to 0$ implies that $\limsup_{\delta\downarrow 0} \lambda_\delta \le 0$.
\end{proof}

\begin{proof}[Proof of Example~\ref{thmnotCesaro}]
Consider the Dirichlet solution $u(x,t)$ corresponding to the given potential at some $t < 0$. There exists $n_0$ such that for all $n \ge n_0$, $\lambda_{1/(2n)} > t$. At energies below the periodic spectrum, transfer matrices have strictly positive entries; applying this on intervals $[n,n+1]$ and since products of matrices with positive entries have positive entries, we conclude that $u(x,t)$ has at most one zero with $x > n_0-1$. Since zeros of an eigensolution are isolated, it follows that $u(\cdot,t)$ has finitely many zeros, so by Sturm oscillation theory, $\min \sigma_\ess(L_V) \ge t$. Since this holds for arbitrary $t < 0$, we conclude $\min \sigma_\ess(L_V) \ge 0$.

Conversely, since $V$ obeys $\lim_{x\to \infty} \frac 1x \int_0^x V(t) \dd t = 0$, the statement is completed by Theorem~\ref{corL1Cesaro}.
\end{proof}

\begin{proof}[Proof of Example \ref{ex:Sparse1}]
For $x \in [x_n, x_{n+1}]$ we have 
$
\frac{1}{x}\int_{0}^{x}V(t)\dd t\leq \int W(t)\dd t\frac{n+1}{x_n}.
$
Since the condition on $x_n$ implies that $\frac{x_n}{n}\to\infty$ we see that $\lim_{x\to\infty}\frac{1}{x}\int_{0}^{x}V(t)\dd t=0$. Since $V \ge 0$, we have $\sigma_\ess(L_V) \subset \sigma(L_V) \subset [0,\infty)$, so by Theorem~\ref{corL1Cesaro}, $V$ is regular and $\sigma_\ess(L_V) = [0,\infty)$.

Let $H_W$ be the whole-line operator with the potential $W(x)$. Since $f\geq 0$, we have $\sigma(H_W)\subset[0,\infty)$. Hence, we conclude that $\min \sigma(H_{-W})< 0$, for otherwise \cite[Corollary 1]{DamanikKillipSimonCMP} would imply that $W\equiv 0$. Now by \cite[Theorem 7.1]{LastSimon06} it follows that $\sigma_{\ess}(H_{-V})=\sigma(H_{-W})$ (where $H_{-V}$ is the full line operator with potential $V$ extended to $\R_-$ by $V\equiv 0$). Since $\sigma_{\ess}(H_{-V})=\sigma_{\ess}(L_0)\cup \sigma_{\ess}(L_{-V})$ this shows that $\min\sigma_\ess(L_{-V}) < 0$.
\end{proof}

\begin{proof}[Proof of Theorem~\ref{thm:ultimatePasturIshii}]
The Lyapunov exponent $\g$ is harmonic in $\C_+\cup \C_-$ and subharmonic in $\C$. By \eqref{eq:lyaponuv} for a.e. $\eta\in S$ 
\begin{align*}
	\lim\limits_{x\to\infty}\frac 1x \log \lvert u_\eta(x,z)|=\gamma(z)
\end{align*}
converges pointwise in $\C_+\cup\C_-$; by the weak identity principle for subharmonic functions and precompactness, convergence to $\gamma$ is also in $\cD'(\C)$. By Schnol's theorem, for $\mu_\eta$-a.e. $z$,
\begin{align}\label{eq:Dez20}
	\limsup \limits_{x\to\infty}\frac 1x \log \lvert u_\eta(x,z)|\leq 0.
\end{align}
Fix a sequence $x_n \to \infty$. By the upper envelope theorem \cite[Theorem 2.7.4.1]{Azarin09} there is a polar set $X_\eta$ such that for any $z\in \C\setminus X_\eta$,
\begin{align*}
\limsup\limits_{n\to\infty}\frac 1{x_n} \log \lvert u_\eta(x_n,z)|=\g(z).
\end{align*}
On $Q$, $\gamma>0$. Hence, since \eqref{eq:Dez20} holds for $\mu_\eta$-a.e. $z$, we have $\mu_\eta(Q\setminus X_\eta)=0$. 
\end{proof}
\section{Conformal maps}\label{sec:confMaps}
In view of Corollary \ref{cor:Widomhalfline} and the subsequent discussion, it is of great interest if the harmonic measure of the domain $\C\setminus\E$ is absolutely continuous with respect to the Lebesgue measure $\chi_\E(x)\dd x$. Let $z_0<\min E$ and $G_\E(z,z_0)$ be the Green function of $\C\setminus\E$ with pole at $z_0$ and $\Pi_{z_0}$ the associated comb domain, defined by the upper semicontinuous function $\height$. We say that $\Pi_{\z_0}$ satisfies the sector condition if 
\begin{align*}
S_{z_0}(x)=\sup_{y\in (0,\pi)}\frac{\height_{z_0}(y)}{\lvert x-y\rvert}
\end{align*}
is finite for Lebesgue-a.e. $x\in(0,\pi)$. Then, $\omega_\E(\cdot,z_0)$ is absolutely continuous with respect to the Lebesgue measure if and only if $\Pi_{\z_0}$ satisfies the sector condition. 

The proceeding discussion holds for general semibounded sets $\E$ and does not assume that $\E$ is an Akhiezer-Levin set. Let $M$ be the Martin function with pole at $\infty$, normalized at some internal point $z_*$, $\rho$ its Riesz measure and $\Pi$ and $\Theta$ the corresponding comb and comb mapping. There is a similar characterization  for absolute continuity of $\rho$. Let $\height$ be the upper semicontinuous function defining $\Pi$. Then $\rho$ is absolutely continuous with respect to $\chi_\E(x)\dd x$ if and only if the domain contains a Stolz angle at a.e.\ point at the base of the comb, i.e.
\begin{align}\label{eq:sectorConditionMartin}
S(x)=\limsup_{y\to x}\frac{\height(y)}{\lvert x-y\rvert}
\end{align}
is finite for Lebesgue-a.e. $x\in(0,b)$.

Under various conditions on the set $\E$, it is known that the conformal map $i\Theta'$ has a product representation. We now provide a general proof which does not assume Dirichlet-regularity or any other additional assumptions.
\begin{lemma}\label{lemma:productformula}
	Let $\E$ be a closed non-polar set of the form \eqref{eq:spectrumGaps}. For each $j$ there exists $c_j \in [a_j,b_j]$ such that $M$ is strictly increasing on $(a_j,c_j)$ and strictly decreasing on $(c_j,b_j)$, and $\Theta'(z)$ is given on $z\in \C \setminus [b_0,\infty)$ by
	\begin{equation}\label{eqn:productformula}
	i\Theta'(z) = \frac{C}{\sqrt{b_0-z}} e^{ \int_{[b_0,\infty) \setminus \E} \xi(x) \frac{1+xz}{x-z}\frac{\dd x}{1+x^2}}
	\end{equation}
	where $\xi(x) = 1/2$ for $x \in (a_j,c_j)$, $\xi(x)=-1/2$ for $x \in (c_j,b_j)$, $\xi(x)=0$ for $x\notin [b_0,\infty) \setminus\E$, and $C > 0$ is a normalization constant.
\end{lemma}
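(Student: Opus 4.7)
For the first assertion, the key is that $M_\i$ is strictly concave on each gap $(a_j,b_j)$. Differentiating the Hadamard representation of Lemma \ref{lem:Levin1} in $x\in(a_j,b_j)$ and comparing with Lemma \ref{lem:theta} gives
\[
M_\i'(x)=-\int_\E \frac{\dd\rho(t)}{t-x}=-i\Theta'(x),\qquad M_\i''(x)=-\int_\E \frac{\dd\rho(t)}{(t-x)^2}<0.
\]
Since the subharmonic extension \eqref{eq:subharmExt} is nonnegative on $[a_j,b_j]$, strict concavity allows only three scenarios: $M_\i$ is strictly monotone on the full closed gap (producing $c_j=a_j$ or $c_j=b_j$), or it attains a unique interior maximum $c_j\in(a_j,b_j)$. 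In either case, the monotonicity structure of the lemma follows.

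For the product formula, I would start from the exponential Herglotz representation of the Herglotz function $f=i\Theta'$:
\[
\log f(z)=c_0+\int_\R\left(\frac{1}{t-z}-\frac{t}{1+t^2}\right)\phi(t)\,\dd t,\qquad \phi(t)=\tfrac{1}{\pi}\arg f(t+i0)\in[0,1],
\]
with $\phi$ defined Lebesgue-a.e. From the Cauchy representation of $f$ together with the first part, one reads off $\phi=0$ on $(-\infty,b_0)$ (positive integrand of a positive measure), $\phi=1$ on $(a_j,c_j)$ (where $f=-M_\i'<0$), and $\phi=0$ on $(c_j,b_j)$ (where $f>0$). The decisive step is to identify $\phi=1/2$ for Lebesgue-a.e. $t\in\E$.

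This identification is where the comb structure enters. Since $\Theta:\C_+\to\Pi$ is conformal and $\E$ maps onto the base $(0,b)\subset\R$ of $\Pi$, a smooth horizontal segment of $\partial\Pi$ away from the countable set of slit feet, the theory of angular derivatives of conformal maps (e.g.\ Pommerenke's boundary theorems) ensures that the nontangential limit $\Theta'(t+i0)$ exists Lebesgue-a.e.\ on $\R$ and is a positive real number at every $t$ whose image $\Theta(t)$ lies in a smooth tangential portion of $\partial\Pi$ with horizontal tangent. The slit-foot preimages $\{a_j,b_j\}_j$ form a countable, hence Lebesgue-null, subset of $\E$; consequently $\arg(i\Theta'(t+i0))=\pi/2$ for Lebesgue-a.e.\ $t\in\E$, which gives $\phi=1/2$ there.

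With $\phi$ identified Lebesgue-a.e., decompose $\phi=\xi+\tfrac{1}{2}\chi_{(b_0,\infty)}$, where $\xi$ is supported on $[b_0,\infty)\setminus\E$ and takes the values prescribed in the lemma. The second summand contributes the prefactor $1/\sqrt{b_0-z}$ via the elementary computation
\[
\int_{b_0}^\infty\left(\frac{1}{t-z}-\frac{t}{1+t^2}\right)\,\dd t=-\log(b_0-z)+\text{const},
\]
and the first summand, after the identity $\frac{1+xz}{(x-z)(1+x^2)}=\frac{1}{x-z}-\frac{x}{1+x^2}$, yields precisely the exponential integral in the statement. Absorbing additive constants in the exponent into the multiplicative constant $C$ yields the stated formula. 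The main obstacle, which I expect to be the most delicate step, is the identification $\phi=1/2$ on $\E$ a.e., since it is not an algebraic consequence of the Cauchy representation of $i\Theta'$ but requires non-trivial boundary theory for conformal maps onto slit domains.
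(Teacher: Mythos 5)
Your concavity argument for the existence of the critical points $c_j$ is correct and in fact cleaner than the paper's: differentiating the Hadamard representation twice inside a gap indeed gives $M_\infty''(x)=-\int_\E\frac{\dd\rho(t)}{(t-x)^2}<0$ (strictly, since $\E$ is non-polar so $\rho\neq 0$), and strict concavity together with $M_\infty\ge 0$ on $[a_j,b_j]$ yields a unique maximizer $c_j\in[a_j,b_j]$ and the stated monotonicity. The paper obtains the $c_j$ the other way around, as a byproduct of the product formula; your route gives them up front. Your algebraic bookkeeping (the decomposition $\phi=\xi+\tfrac12\chi_{(b_0,\infty)}$, the identification of $\phi$ with the paper's $\xi+\tfrac12$ via the sign of $i\Theta'=-M_\infty'$ on the two halves of a gap, and the emergence of the prefactor $1/\sqrt{b_0-z}$) is also correct.

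The gap is exactly where you flagged it: the identification $\phi=1/2$ Lebesgue-a.e.\ on $\E$, equivalently $\arg\Theta'(t+i0)=0$ a.e.\ on $\E$. Your appeal to angular-derivative theory for conformal maps is not sufficient in the generality needed here. The base of a comb with infinitely many slits is \emph{not} a free boundary arc of $\Pi$: the slit feet may accumulate densely on the base, so there is no open neighborhood of a base point whose intersection with $\partial\Pi$ is a smooth arc, and the usual results (smooth boundary arcs imply continuous, non-vanishing $\Theta'$ with argument equal to the tangent direction) do not apply. Moreover, the theorems that \emph{do} hold for general simply connected domains give a.e.\ existence and behavior of angular derivatives with respect to \emph{harmonic measure} on $\partial\Pi$, not with respect to Lebesgue measure on $\E$; those two measures need not be mutually absolutely continuous (this is precisely the content of the sector condition discussed in Section~\ref{sec:confMaps}), so a harmonic-measure-a.e.\ statement does not transfer to the Lebesgue-a.e.\ statement your argument requires. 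Without a further idea, this step does not close. The paper sidesteps the issue entirely by approximating $\E$ from inside with finite-gap sets $\E_n$, invoking the Schwarz--Christoffel formula (where $\xi_n$ is explicit and finitely supported, so no boundary regularity is needed), proving $M_n\to M$ in $\cD'(\C)$ via Theorem~\ref{lem:ConePositive}, and then passing to the limit in the exponential Herglotz representations by dominated convergence. That approximation argument is what actually pins down $\phi$ on $\E$; you would either need to import it or supply a genuinely new boundary-regularity argument for infinite combs.
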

\begin{proof}
	For finite-gap sets, this is a reformulation of the Schwarz--Christoffel mapping. If $\E$ has infinitely many gaps, we consider them labelled by $j\in\bbN$ in an arbitrary way and denote $\E_n = [b_0,\infty) \setminus \cup_{j=1}^n (a_j,b_j)$. Denote by $M_n$ the Martin functions at $\infty$ corresponding to the sets $\E_n$, normalized by $M_n(z_*) = 1$ for some fixed $z_* < b_0$. Since the functions $M_n$ are all positive harmonic on $\C\setminus [b_0,\infty)$, for any $R > \lvert b_0\rvert$, by Harnack's principle they are uniformly bounded on the line segments parametrized by $-R+it$, $t+iR$, $t-iR$, with $t \in [-R,R]$. Since $M_n(x+iy)$ are increasing in $y > 0$ and symmetric, it follows that $M_n$ are uniformly bounded above on the boundary of $(-R,R)\times (-R,R)$ for any $R$ large enough. Since they are also nonnegative, they are a precompact sequence of subharmonic functions on $\C$. By the upper envelope theorem, for any subsequential limit $h = \lim_{k\to\infty} M_{n_k}$, quasi-everywhere on $\E$, $h(z) = \lim_{k\to\infty} M_{n_k}(z) = 0$, so by Theorem~\ref{lem:ConePositive}, $h$ is Martin function for the domain $\C\setminus\E$ with $h(z_*) =1$. It follows that $M_n$ converge to $h$ in $\cD'(\C)$.
	
	It follows that $\Theta_{n}$ converge to $\Theta$ since their real parts converge and their imaginary parts are zero on $(-\infty,b_0)$. In particular, the Herglotz functions $i\Theta'_{n}$ converge to $ci\Theta'$ uniformly on compact subsets of $\C_+$, so by interpreting this convergence in terms of their exponential Herglotz representations, \[
	\lim_{n\to\infty} \int_\R g(x) \xi_n(x) \frac{\dd x}{1+x^2} = \int_\R g(x) \xi(x) \frac{\dd x}{1+x^2}, \qquad \forall g \in C(\R\cup\{\infty\})
	\]
	where $\xi$ is determined by $\lim_{y\downarrow 0} \arg\Theta'(x+iy) = \pi \xi(x)$ Lebesgue-a.e.\ $x\in\R$. By using test functions $g$ supported in $(a_j,b_j)$, it follows that for each $j$, the critical points $c_{j,n}$ must converge to a point $c_j \in [a_j,b_j]$. Then $\xi_n$ converge pointwise to the function $\tilde\xi$ which is $1$ on intervals $(a_j,c_j)$, $-1$ on $(c_j,b_j)$, and $0$ on $[b_0,\infty)$, so by dominated convergence with dominating function $\lVert g \rVert_\infty \frac 1{1+x^2} \chi_{[b_0,\infty)\setminus\E}$,
	\[
	\lim_{n\to\infty} \int_\R g(x) \xi_n(x) \frac{\dd x}{1+x^2} = \int_\R g(x) \tilde\xi(x) \frac{\dd x}{1+x^2}, \qquad \forall g \in C(\R\cup\{\infty\}).
	\]
	Of course, this implies $\xi = \tilde \xi$ which implies \eqref{eqn:productformula}. Finally, by separating the contribution from the gap $(a_j,b_j)$ from the remainder of the integral, \eqref{eqn:productformula} can be extended into the gap $(a_j,b_j)$ to show that $i\Theta' > 0$ on $(a_j,c_j)$ and $i \Theta' < 0$ on $(c_j,b_j)$. It follows that $M' > 0$ on $(a_j,c_j)$ and $M' < 0$ on $(c_j,b_j)$, so our construction of $c_j$ as limits of $c_{j,n}$ satisfies the property in the lemma.
\end{proof}

As the final topic of this section, we describe a class of Akhiezer--Levin sets for which it can be seen by purely complex theoretic arguments that the Martin function has the two-term expansion \eqref{Martinexp2term}. While this is not as general as Theorem~\ref{thm:MainAkhiezerLevin}, within its scope of applicability, it provides a formula for $a_\E$ in terms of critical points of the Martin function.

\begin{lemma}\label{lemma:constantfromfinitegaplength}
	Let $\E \subset \bbR$ be of the form \eqref{eq:spectrumGaps}. If $\sum_{j=1}^N (b_j - a_j) < \infty$, then $\E$ is an Akhiezer--Levin set, the Martin function obeys the two-term expansion \eqref{Martinexp2term}, and
	\begin{equation}\label{eqn:formulaforconstant}
	a_\E = b_0 + \sum_{j=1}^N (a_j + b_j - 2 c_j).
	\end{equation}
\end{lemma}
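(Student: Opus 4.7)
The plan is to start from the product formula of Lemma \ref{lemma:productformula} and derive the asymptotics of $M_\infty$ by direct calculation. Using the partial fraction identity $\frac{1+xz}{(x-z)(1+x^2)} = \frac{1}{x-z}-\frac{x}{1+x^2}$, the $z$-independent piece $-x/(1+x^2)$ yields a convergent integral that absorbs into the constant, while $\int\xi(x)\,dx/(x-z)$ with $\xi=\pm 1/2$ on gaps integrates explicitly to give
\[
i\Theta'(z) = \frac{\tilde C}{\sqrt{b_0-z}}\prod_j \frac{c_j-z}{\sqrt{(a_j-z)(b_j-z)}},
\]
with the infinite product converging because each factor equals $1+O((b_j-a_j)/|z|)$ at large $|z|$ and $\sum(b_j-a_j)<\infty$.

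For $z=-k^2$ with $k\to\infty$, the product expands as $1+A'/k^2+o(k^{-2})$ with $A':=\sum_j(c_j-(a_j+b_j)/2)$ absolutely convergent (dominated by $\tfrac12\sum_j(b_j-a_j)$), and $1/\sqrt{b_0+k^2}=(1/k)(1-b_0/(2k^2)+O(k^{-4}))$, so $i\Theta'(-k^2) = (\tilde C/k)(1+(A'-b_0/2)/k^2+o(k^{-2}))$. Since $\Theta(b_0)=iM_\infty(b_0)=0$ by Dirichlet regularity of $b_0$ and $\Theta$ is purely imaginary on $(-\infty,b_0)$, one has $\Theta(-k^2)=iM_\infty(-k^2)$, so $dM_\infty(-k^2)/dk = 2k\cdot i\Theta'(-k^2)\to 2\tilde C$. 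Lemma \ref{lem:AkhiezerLevin} then establishes both that $\E$ is Akhiezer--Levin and that $\tilde C=1/2$ under the Akhiezer--Levin normalization. Integrating gives the formal expansion $M_\infty(-k^2) = k + (b_0/2-A')/k + o(1/k)$ up to a constant of integration, identifying $a_\E = b_0-2A' = b_0+\sum_j(a_j+b_j-2c_j)$.

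The main obstacle is showing that this constant of integration vanishes, so the expansion contains no $O(1)$ term. My plan is to handle this by approximation: the finite-gap sets $\E_n=[b_0,\infty)\setminus\bigcup_{j=1}^n(a_j,b_j)$ admit the two-term expansion without a constant term by the classical Schwarz--Christoffel analysis of the corresponding comb mapping, and the convergence $M_{\E_n}\to M_\infty$ from the proof of Lemma \ref{lemma:productformula}, together with $c_{j,n}\to c_j$ and dominated convergence $a_{\E_n}\to a_\E$ (since $|a_j+b_j-2c_{j,n}|\le b_j-a_j$ is summable), permits passage to the limit $n\to\infty$, with uniform control of the $o(1/k)$ remainder coming from the explicit product structure and summable domination.

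Finally, extending the expansion from the negative real axis to sectors $\arg z\in[\delta,2\pi-\delta]$ uses symmetry of $M_\infty$ under complex conjugation (handling $\C_-$) together with a Phragm\'en--Lindel\"of argument to bridge the sector around $\arg z=\pi$, exactly as in the final step of the proof of Theorem \ref{thm:MainAkhiezerLevin}.
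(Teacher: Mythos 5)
Your proposal follows essentially the same route as the paper: start from the product formula of Lemma~\ref{lemma:productformula}, split the exponent via partial fractions to isolate a $z$-independent factor, extract the asymptotics of $i\Theta'$ by dominated convergence using $\sum_j(b_j-a_j)<\infty$, and integrate. Writing the exponential explicitly as $\prod_j \frac{c_j-z}{\sqrt{(a_j-z)(b_j-z)}}$ and working along the negative real axis via $\frac{d}{dk}M_\infty(-k^2)=2k\, i\Theta'(-k^2)$ is equivalent to the paper's computation of $i\Theta'$ directly in the sector; as a minor remark, the paper's version already yields the expansion in $\arg z\in[\delta,2\pi-\delta]$ with no need for the Phragm\'en--Lindel\"of bridge, so that final step in your plan, while valid, is superfluous here.

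Where you genuinely add something is in flagging the constant-of-integration issue: integrating the asymptotic expansion of $i\Theta'$ determines $i\Theta$ only up to an additive constant, and the paper's brief ``integrating along rays'' does not make explicit why that constant vanishes (in the proof of Theorem~\ref{thm:MainAkhiezerLevin} the analogous constant is killed by the sandwich $\Re\sqrt{-z}\le M_\E\le h$ coming from spectral theory, but no such upper envelope is available here). Your proposed fix is sound in outline. To make it watertight one should note that, because $\bigl|\log\frac{c_{j,n}-w}{\sqrt{(a_j-w)(b_j-w)}}\bigr|\le\tfrac12\log\frac{b_j-w}{a_j-w}\le\tfrac12\frac{b_j-a_j}{a_j-w}$ for $w<b_0$, the error in expanding each factor is dominated, uniformly in $n$, by a summable sequence, so $\frac{d}{dk}M_{\E_n}(-k^2)=1-\frac{a_{\E_n}}{2k^2}+\eta_n(k)$ with $|\eta_n(k)|\le \varepsilon(k)/k^2$ and $\varepsilon(k)\to 0$ independently of $n$; integrating from $k$ to $\infty$ with the finite-gap anchor $M_{\E_n}(-k^2)-k-\frac{a_{\E_n}}{2k}\to 0$ (for finite $N$ this follows, e.g., from finiteness of the representing measure $\nu_n$ together with Lemma~\ref{lem:Herglotz}) then gives a bound $\bigl|M_{\E_n}(-k^2)-k-\frac{a_{\E_n}}{2k}\bigr|\le\int_k^\infty\varepsilon(s)s^{-2}\,\mathrm{d}s=o(1/k)$ that is uniform in $n$, and passing to the limit with $a_{\E_n}\to a_\E$ completes the argument. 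You stated this as a plan rather than carrying it out, but the mechanism you invoke (summable domination and the explicit product) is precisely what makes it go through, so I regard the proposal as correct and, on this point, more explicit than the paper's own proof.
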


\begin{proof}
	Finite gap length can be restated as $\int \chi_{[b_0,\infty)\setminus\E}(x) \dd x < \infty$ and it implies that the exponent in \eqref{eqn:productformula} can be split into two separately integrable integrands, of which one is $z$-independent, to give
	\[
	i\Theta_\E'(z) = \frac{C_\E}{\sqrt{b_0-z}} e^{\int_{[b_0,\infty) \setminus \E} \xi(x) \frac{1}{x-z} \dd x}.
	\]
	For any $\delta > 0$, using finite gap length and dominated convergence,
	\[
	\int_{[b_0,\infty)\setminus \E} \xi(x) \frac 1{x-z} \dd x = - \frac 1z \int_{[b_0,\infty)\setminus \E} \xi(x) \dd x + o(\lvert z \rvert^{-1}),
	\]
	as $z\to\infty$,  $\arg z\in [\delta,2\pi-\delta]$. Evaluating the integral $\int_{[b_0,\infty)\setminus \E} \xi(x) \dd x$ and substituting into $\Theta'(z)$,
	\[
	i\Theta'_\E(z) = C_\E \left( \frac{1}{\sqrt{-z}} + \frac 12 (b_0 + \sum_{j=1}^N (a_j + b_j - 2 c_j)) \frac 1{\sqrt{-z}^{3}} + o(\lvert z \rvert^{-3/2}) \right)
	\]
	and integrating along rays shows that, as $z\to \infty$ with $\arg z \in [\delta,2\pi-\delta]$,
	\[
	i\Theta_\E(z) = C_\E\left( - 2 \sqrt{-z} + (b_0 + \sum_{j=1}^N (a_j + b_j - 2 c_j)) \frac 1{\sqrt{-z}} + o(\lvert z \rvert^{-1/2}) \right).
	\]
	Taking imaginary parts gives a two-term expansion of $M_\E$, which matches \eqref{Martinexp2term} with $C_\E = \frac 12$. Reading off the second term gives \eqref{eqn:formulaforconstant}.
\end{proof}

\bibliographystyle{amsplain}

\providecommand{\MR}[1]{}
\providecommand{\bysame}{\leavevmode\hbox to3em{\hrulefill}\thinspace}
\providecommand{\MR}{\relax\ifhmode\unskip\space\fi MR }
\providecommand{\MRhref}[2]{%
	\href{http://www.ams.org/mathscinet-getitem?mr=#1}{#2}
}
\providecommand{\href}[2]{#2}

\end{document}